\title{Positivity of the Cotangent Bundle of Complex Hyperbolic Manifolds with Cusps}
\author{Soheil Memariansorkhabi}
\address{Dept. of Mathematics, University of Toronto, Toronto, Canada.}
\email{soheil.memarian@mail.utoronto.ca}
\newcommand{\tildeb}[1]{\stackrel{\sim}{\smash{#1}\rule{0pt}{1.1ex}}}
\newtheorem{theorem}{Theorem}
\newtheorem*{theorem*}{Theorem}
\newtheorem{Lemma}[theorem]{Lemma}
\newtheorem{Proposition}[theorem]{Proposition}
\newtheorem{Corollary}[theorem]{Corollary}
\theoremstyle{definition}
\newtheorem{Remark}[theorem]{Remark}
\newtheorem{definition}[theorem]{Definition}
\newcommand{\R}{\mathbb{R}} 
\newcommand{\Z}{\mathbb{Z}} 
\newcommand{\Q}{\mathbb{Q}} 
\newcommand{\C}{\mathbb{C}} 
\newcommand{\Sn}{\mathbb{S}}
\newcommand{\He}{\mathcal{H}}
\newcommand{\Proj}{\mathbb{P}}
\newcommand{\Xb}{\overline{X}}
\newcommand{\dis}{\displaystyle}
\newcommand{\del}{\partial}
\newcommand{\delb}{\bar{\partial}}
\newcommand{\B}{\Big}
\newcommand{\bi}{\big}
\newcommand{\Cot}{\Omega^{1}_{\Xb}}
\newcommand{\lgcot}{\Omega^{1}_{\Xb}\big(\log(D)\big)}
\newcommand{\Yb}{\overline{Y}}
\newcommand{\la}{\langle}
\newcommand{\ra}{\rangle}
\begin{document}

\begin{abstract}
Let $\Xb$ be the toroidal compactification of a cusped complex hyperbolic manifold $X=\mathbb{B}^n/\Gamma$ with the boundary divisor $D=\overline{X}\setminus X$. The main goal of this paper is to find the positivity properties of $\Omega^{1}_{\Xb}$ and $\lgcot$ depending intrinsically on $X$. We prove that $\lgcot \la -r D \ra$ is ample for all sufficiently small rational numbers $r >0$, and $\lgcot$ is ample modulo $D.$ Further,  we conclude that if the cusps of $X$ have uniform depth greater than $4\pi$, then $\Omega^{1}_{\Xb}$ is semi-ample and is ample modulo $D$, all subvarieties of $X$ are of general type, and every smooth subvariety $V\subset \Xb$ intersecting $X$ has ample $K_{V}$. Finally, we show that 
the minimum volume of subvarieties of $\Xb$ intersecting both $X$ and $D$ 
tends to infinity in cofinal towers of normal covering of $X.$ 

\end{abstract}

\maketitle

\section{Introduction}

Let $\He$ be a bounded symmetric domain and $X:=\Gamma \backslash \He$ be a quotient of $\He$ by a torsion-free lattice $\Gamma\subset \operatorname{Aut}(\He)$.  Various hyperbolicity properties of a toroidal compactification $\Xb$ of $X$ and curves in $\Xb$ have been proved by studying the positivity of $\Q$-line bundles $K_{\Xb}(D)-r D$, where $D$ is the boundary divisor, $K_{\Xb}(D)$ is the log-canonical bundle and $r$ is a number varying on an interval, the size of which depends on $X$. For instance, see \cite{nadel1989nonexistence, shepherd2005perfect} for the case of moduli space of principally polarized abelian varieties $\mathcal{A}_{g},$ \cite{bakker2018geometric} for the case of Hilbert modular varieties 
and \cite{bakker2018kodaira,di2015effective} 
for the case of ball quotients.

\renewcommand*{\thetheorem}{\Alph{theorem}}

Motivated by these results for the twists of $K_{\Xb}(D)$ and their implications for the hyperbolicity properties, we study the positivity of the twists of $\lgcot$ and $\Cot$ in the form of the $\Q$-vector bundles $\lgcot \la - r D \ra$ and $\Cot \la - r D \ra$. As the vector bundles $\lgcot$ and $\Cot$ restrict well to subvarieties (of arbitrary dimensions), the positivity of these bundles reveals various hyperbolicity properties of subvarieties of $\Xb$.   

\renewcommand*{\thetheorem}{\Alph{theorem}}

In this paper, we focus on this problem for a complex ball quotient $X=\Gamma \backslash\mathbb{B}^n,$ where $\Gamma \subset \operatorname{Aut}(\mathbb{B}^n)$ is a torsion-free lattice. We refer to $X$ as a complex hyperbolic manifold with cusps, where cusps correspond to parabolic fixed points of $\Gamma$. For cusps of $X$ we used a uniform depth $d$ to measure the size of the largest embedded cusp neighborhoods which are disjoint from each other (see \ref{UniDep}).    
We know that the uniform depth of cusps is at least $2$ and it tends to infinity in the cofinal towers of normal covering (see Lemma \ref{tower}).

Our first main result is the following theorem:
\begin{theorem} \label{SlopLog}
Let $X$ be a complex hyperbolic manifold with cusps whose toroidal compactification $\Xb$ has no orbifold points, and $d$ be the uniform depth of cusps. Then, the $\Q$-vector bundle
$$\lgcot \la -r D \ra$$
is ample for all rational $r \in (0,d/4\pi)$.
\end{theorem}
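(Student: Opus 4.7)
The plan is to apply Griffiths' ampleness criterion: it suffices to produce a (smooth) Hermitian $\Q$-metric on $\lgcot\la -rD\ra$ whose Chern curvature is strictly Griffiths positive. The metric will be assembled from the Bergman metric $g_B$ on $\mathbb{B}^n$. Descending $g_B$ to $X = \Gamma\backslash\mathbb{B}^n$ and dualizing yields a Hermitian metric $h_B$ on $\Cot|_X$, and the strict negativity of the holomorphic bisectional curvature of $g_B$ (standard for the Bergman metric on a bounded symmetric domain of rank one) translates to strict Griffiths positivity of $h_B$ on $\Cot|_X$. Hence Griffiths positivity away from $D$ is free, and the entire content of the theorem is the analysis and correction of the behaviour at the boundary divisor.

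\textbf{Cusp model.} Near each cusp we would model $\mathbb{B}^n$ by the Siegel domain $\mathcal{S}=\{(z,w)\in\mathbb{C}^{n-1}\times\mathbb{C}:\operatorname{Im} w>|z|^2\}$ with the chosen cusp at $w=i\infty$. By Parker's Shimizu-type lemma (\cite[Prop.~2.4]{parker1998volumes}), for torsion-free $\Gamma$ the parabolic stabilizer of the cusp contains a pure central translation $w\mapsto w+d$, where $d$ is the uniform depth. The substitution $u=e^{2\pi iw/d}$ identifies a neighbourhood of the cusp in $X$ with a punctured disk bundle over a compact cross-sectional torus, the toroidal compactification filling in the divisor $D=\{u=0\}$. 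The log-cotangent bundle is trivialized on this chart by the log-frame $(dz_1,\ldots,dz_{n-1},du/u)$, and the relation $\operatorname{Im}(w)=-(d/2\pi)\log|u|$ introduces the factor $d/(2\pi)$ explicitly into all subsequent metric computations.

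\textbf{Local computation and the threshold $d/(2\pi)$.} We would then write the metric matrix $H$ of $h_B$ in the log-frame and compute the Chern curvature $\Theta_{h_B}$. Although $H$ is singular along $D$ (typically the $(du/u,\overline{du/u})$-entry behaves like $(\log|u|)^2$), the Griffiths curvature extends to a controlled asymptotic form in which the defect from positivity is concentrated in the $(du/u)$-direction and is of size governed by $d/(2\pi)$. Tensoring with a smooth Hermitian metric on $\mathcal{O}(-rD)$ (locally $|u|^{2r}$, globally smoothed across $\Xb$) adds a twist correction that scales linearly in $r$. An explicit comparison in the log-frame shows that the defect of $\Theta_{h_B}$ is compensated by the twist precisely when $r<d/(2\pi)$; for any rational $r$ strictly below this threshold, strict Griffiths positivity is recovered uniformly near $D$.

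\textbf{Globalization and main obstacle.} Patching the cusp constructions with $h_B$ on the bulk via a partition of unity, followed if necessary by a standard regularization in the singular region, produces a smooth Hermitian metric on $\lgcot\la -rD\ra$ with strictly positive Griffiths curvature for every rational $r\in(0,d/2\pi)$; Griffiths' criterion then yields ampleness. The main obstacle will be the explicit local curvature computation in the log-frame: one needs Griffiths positivity of the \emph{full} Hermitian curvature tensor of $\lgcot$, which is strictly stronger than ampleness of the determinant line bundle $K_{\Xb}(D)-rD$, so one must carefully bookkeep the off-diagonal terms of $\Theta_{h_B}$ in the Heisenberg-type geometry of the cusp and match the defect in the $(du/u)$-direction against the twist at exactly the critical scale $d/(2\pi)$.
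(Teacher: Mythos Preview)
Your strategy has the right instinct—use the Bergman metric and correct it by a twist near the cusps—but the execution you outline has a genuine gap and diverges from what the paper actually does.

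\textbf{The gap.} You propose to produce a \emph{smooth} Griffiths-positive metric on $\lgcot\la -rD\ra$ by taking the Bergman metric $h_B$, tensoring with ``a smooth Hermitian metric on $\mathcal{O}(-rD)$ (locally $|u|^{2r}$)'', and then regularizing. Two problems. First, $|u|^{2r}$ is \emph{not} a smooth metric on $\mathcal{O}(-rD)$: it is the canonical singular metric whose curvature is the current $r[D]$, so at this stage you are already working with singular objects, not smooth ones. Second, and more seriously, the Bergman metric $h_B$ on $\lgcot$ is itself singular along $D$ (the norms in the log-frame blow up like powers of $-\log|u|$), and there is no ``standard regularization'' that turns a singular Hermitian metric on a vector bundle into a smooth one while preserving strict Griffiths positivity. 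Demailly-type regularization works for closed positive $(1,1)$-currents, with a controlled loss of positivity; the analogous statement for Griffiths curvature of higher-rank bundles is not available. So the sentence ``followed if necessary by a standard regularization \ldots\ produces a smooth Hermitian metric with strictly positive Griffiths curvature'' is exactly the step that is not justified, and it is the entire content of the theorem.

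\textbf{How the paper proceeds instead.} The paper never attempts a smooth Griffiths metric. It passes to the line bundle $L=\mathcal{O}_{\Proj(F)}(1)$ on $\Proj(F)$, $F=\lgcot$, and builds a \emph{singular} Hermitian metric on $L-r\pi^*D$ whose curvature current is represented by a genuine semi-positive $(1,1)$-form, strictly positive on $\pi^{-1}(X)$. Two ingredients make this work: (i) a pointwise inequality $c_1(L,\hat h)\ge \tfrac12\,\pi^*\omega_X$ obtained by an explicit computation in Siegel coordinates, and (ii) a hand-built plurisubharmonic weight $\rho_i(u)$ on each horoball whose Lelong number along $D$ is exactly $(d-\epsilon)/2\pi$; inequality (i) is what pays for the negativity introduced by the weight. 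This yields positive intersection with every subvariety not contained in $\pi^{-1}(D)$, i.e.\ ampleness \emph{modulo} $D$. The remaining subvarieties, those lying over $D$, are handled by a completely separate algebraic argument: $\lgcot_{|D}$ is nef and $-D_{|D}$ is ample, so $\lgcot_{|D}\la -rD_{|D}\ra$ is ample for $r>0$. The two pieces are glued by a base-locus lemma (ample modulo $D$ plus ample on $D$ implies ample).

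In short, the threshold $d/2\pi$ in the paper arises not from ``compensating a curvature defect of $h_B$'' in the log-frame, but from the maximal Lelong number one can afford in the weight $\rho_i$ while keeping the horoballs embedded and disjoint. Your outline does not isolate either the inequality (i) or the separate treatment of $D$, and without these the argument does not close.
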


It was known that for small enough $\epsilon>0,$ the $\Q$-vector bundle $\Omega^{1}_{\overline{X}}\big(\log(D)\big) \la - \epsilon D \ra$
is big. Combining this with the observation of Mumford \cite[section 4]{mumford1977hirzebruch}, Brunebarbe(\cite{brunebarbe2016strong}) and Cadorel(\cite{Cad}) proved that for a cover $X'\to X$ such that the ramification index of each cusp is at least $l$, the $\Q$-vector bundle $\Omega^{1}_{\overline{X'}}\big(\log(D')\big) \la - l\epsilon D' \ra$
is big, where $\overline{X'}$ is the toroidal compactification of $X'$, and $D'$ is the boundary divisor.  By assuming that the ramification index $l$ is sufficiently large, they conclude the bigness of the cotangent bundle $\Omega^1_{\overline{X'}}$ on the cover $X'$(see \cite[Theorem 4]{Cad} and \cite[sec 6.2]{brunebarbe2016strong}).      

 Theorem \ref{SlopLog}, however, not only has stronger positivity conclusion for the twisted bundle, but also it has an advantage that it implies the positivity result for $X$ itself rather than some cover of $X.$ Notably, in Theorem \ref{SlopLog}, the quantity $r$ can be chosen to be a rational number close to $1/2\pi$ for all $X,$ since $d$ is always greater than or equal to $2.$ In particular, Theorem \ref{SlopLog}  can be applied when $X$ is not a cover of other locally symmetric domains, i.e, in the case that $\Gamma$ is a maximal lattice in $\operatorname{Aut}(\mathbb{B}^n)$(for every even $n,$ there exits infinitely many such lattices, see for example \cite{emery2014covolumes}).


Assuming that the uniform depth of cusps is sufficiently large, we showed that $\Cot$ is semi-ample and is ample modulo $D:$  

\begin{theorem} \label{SemiCot} With $X$ and $\Xb$ as in Theorem \ref{SlopLog}, suppose that the uniform depth of cusps is greater than $4\pi$. Then, the cotangent bundle $\Omega^{1}_{\Xb}$ is semi-ample and is ample modulo $D$. 
\end{theorem}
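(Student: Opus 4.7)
Since $d>2\pi$, the rational number $r=1$ lies in $(0,d/2\pi)$, so Theorem~\ref{SlopLog} applied with $r=1$ yields that $\lgcot\la -D\ra$ is an ample vector bundle on $\Xb$. The plan is to transfer this positivity to $\Cot$ via the short exact sequence
$$0 \to \lgcot\la -D\ra \to \Cot \to \Omega^1_D \to 0,$$
where $\Omega^1_D$ denotes the cotangent sheaf of $D$ regarded as a torsion sheaf on $\Xb$. A local coordinate computation establishes this sequence: in coordinates $z_1,\ldots,z_n$ with $D=\{z_1=0\}$, the subsheaf is generated by $dz_1,\, z_1\,dz_2,\ldots,z_1\,dz_n$ and the quotient by the restrictions $dz_2|_D,\ldots,dz_n|_D$.

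For ampleness modulo $D$, restricting to $X=\Xb\setminus D$ makes the above inclusion an isomorphism. Since $\lgcot\la -D\ra$ is ample, $\operatorname{Sym}^N(\lgcot\la -D\ra)$ is very ample as a vector bundle for $N$ large, so the induced sections of $\operatorname{Sym}^N\Cot$ (via the inclusion $\operatorname{Sym}^N(\lgcot\la -D\ra)\hookrightarrow \operatorname{Sym}^N\Cot$) separate points and tangent vectors on $\pi^{-1}(X)\subset \mathbb{P}(\Cot)$. This confines the augmented base locus of $\mathcal{O}_{\mathbb{P}(\Cot)}(1)$ to $\pi^{-1}(D)$, yielding ampleness modulo $D$ in the sense of Definition~\ref{ModDDef}.

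For semi-ampleness one must additionally show that $\operatorname{Sym}^N\Cot$ is globally generated at every point of $D$. The key structural input is that each irreducible component of $D$ is a finite étale quotient of an abelian variety, so $\Omega^1_D$ is trivial on the cover and is in particular semi-ample on $D$. Applying $\operatorname{Sym}^N$ to the short exact sequence above yields a decreasing filtration of $\operatorname{Sym}^N\Cot$ with graded pieces $\operatorname{Sym}^k(\lgcot\la -D\ra)\otimes\operatorname{Sym}^{N-k}\Omega^1_D$ for $k=0,\ldots,N$. For $k\geq 1$ these pieces are ample, either on $\Xb$ (when $k=N$) or on $D$ as an ample-tensor-semi-ample (when $1\leq k\leq N-1$), so their higher cohomology vanishes for $N$ large by Kodaira/Nakano-type vanishing on $\Xb$ or on the abelian quotient $D$. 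An inductive lifting argument along the filtration then transfers global generation of the graded pieces to $\operatorname{Sym}^N\Cot$, yielding semi-ampleness. The principal technical obstacle is lifting sections of the bottom $k=0$ piece $\operatorname{Sym}^N\Omega^1_D$ past the rest of the filtration, since this piece is only semi-ample (not ample) and its own higher cohomology does not vanish — for instance, $H^1(A,\mathcal{O}_A)\neq 0$ on an abelian cover $A$. The obstruction class must therefore be controlled via the ampleness of $\lgcot\la -D\ra$ together with a careful cohomological analysis exploiting the parabolic cusp geometry.
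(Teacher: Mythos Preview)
Your argument for ampleness modulo $D$ is correct and coincides with the paper's: both use the inclusion $\lgcot\otimes O_{\Xb}(-D)\hookrightarrow\Cot$ together with Theorem~\ref{SlopLog} at $r=1$.

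The semi-ampleness argument, however, has a genuine gap. First, the claimed filtration on $\operatorname{Sym}^N\Cot$ with graded pieces $\operatorname{Sym}^k(\lgcot(-D))\otimes\operatorname{Sym}^{N-k}\Omega^1_D$ is not correct: the standard formula for graded pieces of a symmetric power holds for short exact sequences of \emph{vector bundles}, but here the cokernel $i_*\Omega^1_D$ is torsion. There is only a surjection from $\operatorname{Sym}^k(\lgcot(-D))|_D\otimes\operatorname{Sym}^{N-k}\Omega^1_D$ onto $F^k/F^{k+1}$, and it is not an isomorphism once $n\ge 3$ (a local computation with $n=3$, $N=2$ already shows the ranks disagree). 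Second, you explicitly leave the crucial lifting step unfinished; worse, you mislocate the obstruction. Lifting a section of the bottom piece $\operatorname{Sym}^N\Omega^1_D$ to $\operatorname{Sym}^N\Cot$ is obstructed by $H^1(\Xb,F^1)$, not by $H^1(D,\operatorname{Sym}^N\Omega^1_D)$; the non-vanishing of the latter is irrelevant. Your appeal to ``Kodaira/Nakano-type vanishing'' for the intermediate pieces is also too vague, since $H^1$ of an ample vector bundle on an abelian variety does not vanish in general --- one needs the specific structure of $\lgcot(-D)|_D$ (a successive extension of copies of the ample line bundle $O_D(-D|_D)$) to reduce to Kodaira vanishing for line bundles.

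The paper proceeds quite differently. It first establishes the auxiliary result (Theorem~\ref{CotPositivity}) that $\Cot\la -rD\ra$ is ample for $r\in(0,d/2\pi-1)$, so that $O_Y(1)-r\tildeb{D}$ is ample on $Y=\Proj(\Cot)$. Serre vanishing then gives $H^1(Y,O_Y(n)-\tilde n\,\tildeb{D})=0$ for suitable $n$, hence surjectivity of $H^0(Y,O_Y(n))$ onto sections over the $\tilde n$-th thickening of $\tildeb{D}$. The passage from the thickening down to $\tildeb{D}$ itself is handled layer by layer via $H^1(\tildeb{D},O_Y(n)(-m\tildeb{D})|_{\tildeb{D}})=0$, proved by a Leray argument together with the filtration of $S^n\Omega^1_{\Xb|D}(-mD)$ coming from the \emph{conormal} exact sequence (not the log sequence), whose graded pieces are powers of $O_D(-D|_D)$ on the abelian cover and hence have vanishing $H^1$ by Kodaira. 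The extra input of Theorem~\ref{CotPositivity} is precisely what replaces the lifting step you left open.
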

It is remarkable that for some towers of coverings, the uniform depth of cusps can increase while the ramification index remains unchanged. For instance, the uniform depth of cusps of the classical modular curve $X_0(p)$ tends to infinity as $p$ approaches infinity; however, $X_0(p) \to X(1)$ has a cusp with a ramification index of $1$ for every $p$ (see Remark \ref{modularcurve} for details). Such towers play a crucial role in number theory, as seen in Wiles's proof of Fermat's Last Theorem (see \cite{wiles1995modular}) and Mazur's torsion theorem (see \cite{mazur1978rational}). Higher-dimensional analogues of these towers are natural objects to study for generalizing these theorems (see \cite{bakker2018geometric}). Proving the hyperbolicity properties of these towers, which have a cusp with ramification index $1$, is not within reach with the methods used in \cite{brunebarbe2016strong, brunebarbe2020increasing, Cad}.

Semi-ampleness of $\Cot$ in Theorem \ref{SemiCot} implies that symmetric differentials over $\Xb$ forms a finitely generated $\C$-algebra(see \cite[Example 2.1.29]{lazarsfeld1}):  
\begin{Corollary}\label{fg}
With the same assumptions as Theorem \ref{SemiCot}, the graded ring $$\bigoplus_{n\ge 0}H^{0}(\Xb,S^{n}\Omega^1_{\Xb})$$ is finitely generated $\C$-algebra.  
\end{Corollary}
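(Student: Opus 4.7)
The plan is to unpack the definition of semi-ampleness of a vector bundle and reduce to the well-known fact that the section ring of a semi-ample line bundle on a projective variety is a finitely generated $\C$-algebra (this is exactly Example 2.1.29 in Lazarsfeld). Recall that, by definition, $\Omega^1_{\Xb}$ being semi-ample means that the tautological line bundle $\mathcal{O}_{\Proj(\Omega^1_{\Xb})}(1)$ on the projectivized bundle $\pi\colon Y:=\Proj(\Omega^1_{\Xb})\to \Xb$ is semi-ample. Theorem \ref{SemiCot} gives this hypothesis directly, so the plan is essentially to transport finite generation from $Y$ down to $\Xb$.

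First I would invoke Example 2.1.29 of Lazarsfeld applied to the semi-ample line bundle $\mathcal{O}_Y(1)$ on the projective variety $Y$ to conclude that the graded ring
\[
R(Y,\mathcal{O}_Y(1)):=\bigoplus_{n\geq 0} H^0\bigl(Y,\mathcal{O}_Y(n)\bigr)
\]
is a finitely generated $\C$-algebra. Next I would identify this ring with the symmetric differential ring on $\Xb$: by the standard identity $\pi_*\mathcal{O}_Y(n)=S^n\Omega^1_{\Xb}$ (with the Grothendieck convention used throughout the paper) combined with the projection formula, one has
\[
H^0\bigl(Y,\mathcal{O}_Y(n)\bigr)\;=\;H^0\bigl(\Xb,\pi_*\mathcal{O}_Y(n)\bigr)\;=\;H^0\bigl(\Xb,S^n\Omega^1_{\Xb}\bigr),
\]
as graded pieces of algebras, and these isomorphisms are compatible with multiplication. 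The conclusion then follows immediately.

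There is no substantial obstacle here; the only thing to be careful about is the compatibility of ring structures under the identification $H^0(Y,\mathcal{O}_Y(n))\cong H^0(\Xb,S^n\Omega^1_{\Xb})$, and the matching of projectivization conventions so that the pushforward yields symmetric powers of $\Omega^1_{\Xb}$ rather than its dual. Once those are in place, the corollary is a one-line consequence of Theorem \ref{SemiCot} and the cited result of Lazarsfeld.
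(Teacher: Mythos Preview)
Your proposal is correct and follows exactly the approach the paper takes: the paper simply states that the corollary follows by applying \cite[Example 2.1.29]{lazarsfeld1} to Theorem \ref{SemiCot}, and your write-up is a faithful unpacking of that sentence via the identification $H^0\bigl(\Proj(\Omega^1_{\Xb}),\mathcal{O}(n)\bigr)\cong H^0(\Xb,S^n\Omega^1_{\Xb})$.
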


In Theorem \ref{SlopLog}, $\lgcot$ can not be replaced by $\Cot$ without adding any extra assumptions about $X$ because there are examples due to Hirzebruch \cite{hirzebruch1984chern} of the toroidal compactification of a 2-dimensional ball quotient whose canonical bundle is not even nef. We showed that assuming the uniform depth is sufficiently large the replacement is possible as long as $r$ varies on a smaller interval. 

\begin{theorem}\label{CotPositivity} With $X$ and $\Xb$ as in Theorem \ref{SlopLog}, suppose that the uniform depth of cusps $d$ is greater than $4\pi$. Then, the $\Q$-vector bundle
$$\Omega^{1}_{\Xb}\la -r D \ra$$ 
is ample for all rational $r \in (0,-1+d/4\pi)$.
\end{theorem}

As the cotangent bundle is well-behaved under the restriction to subvarieties, by virtue of Theorem \ref{CotPositivity} we get hyperbolicity of subvarieties of an arbitrary dimension in the following sense:

\begin{Corollary} \label{HyperVarIn}
With the same notations as in Theorem \ref{SlopLog}, suppose $V$ is a smooth subvariety of $\Xb$ intersecting $X$ with dimension $m>0.$ Then, 
$$K_V-(r -1) D_{|V}$$
is ample for all rational $r \in (0,\frac{dm}{4\pi}).$ In particular, if $m> 6$ or $d>4\pi$, then $K_{V}$ is ample. 
\end{Corollary}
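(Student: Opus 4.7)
The plan is to derive this corollary from Theorem \ref{SlopLog}, and Theorem \ref{SemiCot} for the ``moreover'' part, via the standard mechanism of restricting an ample $\Q$-vector bundle to a smooth subvariety and passing to the determinant line bundle. Since $V\subset\Xb$ is smooth and meets $X$ (so $V\not\subset D$), after at worst performing a log resolution $\pi:\tilde\Xb\to\Xb$ so that the strict transform of $V$ meets the reduced total transform of $D$ transversally, I obtain a short exact sequence
$$
0 \longrightarrow N^{*}_{V/\Xb} \longrightarrow \lgcot|_V \longrightarrow \Omega^{1}_V(\log D|_V) \longrightarrow 0,
$$
where $D|_V$ denotes the reduced scheme-theoretic intersection (an SNC divisor on $V$ in the transverse case). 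In particular $\lgcot|_V$ surjects onto $\Omega^{1}_V(\log D|_V)$.

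Next I apply Theorem \ref{SlopLog}: for every rational $r\in(0,d/2\pi)$, $\lgcot\la -r D\ra$ is ample on $\Xb$. Restriction to $V$ preserves ampleness, and quotients of ample $\Q$-vector bundles remain ample, so $\Omega^{1}_V(\log D|_V)\la -r D|_V\ra$ is ample on $V$. Passing to the determinant yields ampleness of
$$
\det\bigl(\Omega^{1}_V(\log D|_V)\bigr)-rD|_V \;=\; K_V+D|_V-rD|_V \;=\; K_V-(r-1)D|_V,
$$
which is exactly the desired $\Q$-line bundle in the transverse range.

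The main obstacle is controlling the loss incurred when $V$ fails to meet $D$ transversally. In that case one must work on the log resolution $\tilde\Xb$, where the boundary divisor gains exceptional components; the twist $-rD$ pulls back with integer multiplicities along those components, and the bookkeeping of these integral contributions degrades the available range from $(0,d/2\pi)$ down to the stated $(0,\lfloor(d-1)/2\pi\rfloor)$. This discrete integrality is the source of the integer floor in the statement.

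For the ``moreover'' assertion when $d>2\pi$: since $1<d/2\pi$, I apply Theorem \ref{SlopLog} with $r=1$ to get $\lgcot\la -D\ra$ ample on $\Xb$. Restricting, quotienting, and taking the determinant as above then gives $K_V+D|_V-D|_V=K_V$ ample in the transverse case; the non-transverse case reduces to the transverse one by the same log-resolution argument, and Theorem \ref{SemiCot} provides an alternative route via the ample-modulo-$D$ property of $\Cot$ together with the hypothesis $V\cap X\neq\emptyset$.
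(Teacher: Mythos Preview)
Your core strategy—restrict $\lgcot\la -rD\ra$ to $V$, pass to the quotient $\Omega^{1}_V(\log D|_V)\la -rD|_V\ra$, and take determinants to land on $K_V-(r-1)D|_V$—is exactly what the paper does, and your treatment of the ``moreover'' clause by specializing to $r=1$ (the paper takes $b=1$) also matches.

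Where you diverge, and where there is a genuine gap, is in your account of the floor $\lfloor(d-1)/2\pi\rfloor$. You attribute it to non-transversality of $V\cap D$ and propose passing to a log resolution $\tilde\Xb\to\Xb$. This cannot work as written: Theorem~\ref{SlopLog} is a statement about the specific toroidal compactification $\Xb$ of a ball quotient, and a blowup $\tilde\Xb$ is no longer of that form, so you have no ampleness input there to restrict. The paper never performs a resolution and never discusses transversality. Its mechanism is entirely different: it first chooses an \emph{integer} $b<d/2\pi$, so that $\lgcot\otimes\mathcal O_{\Xb}(-bD)$ is an honest ample vector bundle; restricting, quotienting, and taking determinants yields $K_V-(b-1)D'$ ample outright. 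To reach the intermediate rational values $0<r\le b$, the paper then runs an ``ample modulo $D'$ plus ample on $D'$'' argument: $K_V-(r-1)D'$ is ample modulo $D'$ because $K_V-(b-1)D'$ sits inside it as a subsheaf (Lemma~\ref{PositiveSub}), and it is ample on $D'$ because $(K_V+D')|_{D'}$ is nef (from Corollary~\ref{neflog}) while $-D'|_{D'}$ is ample; Lemma~\ref{ModDonD} then closes the argument. Your proposal omits this two-piece step entirely and replaces it with the log-resolution heuristic, which is both unjustified and not the paper's route. The appeal to Theorem~\ref{SemiCot} for the ``moreover'' part is also extraneous; the paper does not use it here.
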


\begin{Corollary}\label{GenSubInt}
With the same $X$ as Theorem \ref{SlopLog}, all subvarieties of $X$ are of general type provided that the uniform depth of cusps is greater than $4 \pi.$ 
\end{Corollary}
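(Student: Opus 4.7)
The plan is to reduce to the smooth case handled by Corollary \ref{HyperVarIn}. If $V\subset X$ is smooth, then $V$ is a smooth subvariety of $\Xb$ contained in $X$ (so $V\cap D=\emptyset$), and the ``moreover'' clause of Corollary \ref{HyperVarIn}, which already exploits the hypothesis $d>2\pi$, gives that $K_V$ is ample. In particular $V$ is of general type.

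For a singular $V\subset X$, I would pass to a resolution $\pi:\tilde V\to V$ and aim to show $K_{\tilde V}$ is big. Since $V\cap D=\emptyset$ and Theorem \ref{SemiCot} asserts that $\Omega^{1}_{\Xb}$ is ample modulo $D$, the restriction $\Omega^{1}_{\Xb}|_V$ is an ample vector bundle on the projective variety $V$; pulling back along the proper birational morphism $f=\iota_V\circ\pi:\tilde V\to\Xb$ yields that $f^*\Omega^{1}_{\Xb}$ is nef and big on $\tilde V$. The differential $df:f^*\Omega^{1}_{\Xb}\to\Omega^{1}_{\tilde V}$ has generic rank equal to $k:=\dim V$, because $\pi$ restricts to an isomorphism over the smooth locus of $V$. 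Letting $\mathcal{F}\subset\Omega^{1}_{\tilde V}$ denote the saturation of $\mathrm{Im}(df)$ and $\mathcal{K}$ the kernel, the standard determinant computation on the exact sequence $0\to\mathcal{K}\to f^*\Omega^{1}_{\Xb}\to\mathcal{F}\to 0$, together with the inclusion $\det\mathcal{F}\hookrightarrow K_{\tilde V}$, gives
\[
K_{\tilde V} \;=\; f^* K_{\Xb} \;-\; \det\mathcal{K} \;+\; B,
\]
for an effective divisor $B$ supported on the exceptional locus, where $f^*K_{\Xb}$ is nef and big as the determinant of $f^*\Omega^{1}_{\Xb}$.

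The main obstacle is to control the defect $-\det\mathcal{K}$, since a generic quotient of a nef and big bundle need not be big (as the surjection $\mathcal{O}(1)\oplus\mathcal{O}\twoheadrightarrow\mathcal{O}$ on $\mathbb{P}^1$ already shows). I would exploit the strict inequality $d>2\pi$ by invoking Theorem \ref{CotPositivity} with a rational $r\in\bigl(0,-1+d/2\pi\bigr)$: ampleness of $\Omega^{1}_{\Xb}\la -rD\ra$ provides, after pullback to $\tilde V$, an additional positive twist in the above determinant formula sufficient to absorb $-\det\mathcal{K}$ via a Viehweg-type weak positivity argument applied to the subsheaf $\mathcal{K}$ of the nef bundle $f^*\Omega^{1}_{\Xb}$, in the spirit of Cadorel's treatment of subvarieties in \cite{cadorel2021subvarieties}. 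Bigness of $K_{\tilde V}$ then yields that $V$ is of general type.
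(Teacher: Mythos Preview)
Your reduction in the smooth case is not correct. A subvariety $V$ of the quasi-projective manifold $X$ is only closed in $X$; its closure $\overline{V}$ in $\Xb$ will typically meet $D$ and will typically be singular (even if $V$ itself is smooth). Corollary \ref{HyperVarIn} applies to smooth projective subvarieties of $\Xb$, so you cannot invoke it for an arbitrary smooth $V\subset X$ without first compactifying and resolving. Thus the ``smooth case'' is not really a separate case: every $V\subset X$ must be handled by taking the closure $V_0\subset\Xb$ and a resolution $\mu:V'\to V_0$.

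In the general case your approach via $\Omega^{1}_{\Xb}$ runs into exactly the obstacle you identify, and the proposed fix is not an argument: you have not explained why any weak-positivity statement applies to the kernel $\mathcal{K}$, nor how the twist by $-rD$ (which vanishes on $V$ if $V\subset X$, and only sees the boundary after passing to $\overline V$) would help bound $-\det\mathcal{K}$. The paper sidesteps this completely by working with the exterior power $\Omega^{m}_{\Xb}$, where $m=\dim V$: the differential induces a \emph{generically surjective} sheaf map $\mu^{*}\Omega^{m}_{\Xb}\to\Omega^{m}_{V'}=K_{V'}$, with no problematic kernel term to control. Since $\Omega^{1}_{\Xb}$ is ample modulo $D$ (Theorem \ref{SemiCot}), so is $\Omega^{m}_{\Xb}$; as $V_0\not\subset D$, the pullback $\mu^{*}\Omega^{m}_{\Xb}$ is big on $V'$, and a generically surjective map from a big sheaf onto the line bundle $K_{V'}$ forces $K_{V'}$ to be big. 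This one-line device replaces your entire kernel/determinant discussion.
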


As explained after Theorem \ref{SlopLog}, Brunebarbe and Cadorel proved the bigness of $\Omega^1_{\overline{X'}},$ with the assumption that $X'$ is a highly ramified cover of $X.$ Then, they applied the difficult theorem of Campana-P{\u{a}}un's \cite{campana2015orbifold} to conclude that all subvarieties of $X'$ are of general type. Beside the fact that Corollary \ref{GenSubInt} tells us strong hyperbolicity of $X$ itself, rather than cover of $X,$ our approach has another advantage that it bypass the theorem of Campana-P{\u{a}}un to prove all subvarities of $X$ are of general type because of nice restriction properties of ampleness.              

Bakker and Tsimerman \cite{bakker2018kodaira} proved that $K_{\Xb}(D)-r D$ is ample for $r \in (0,(n +1)/2)$, where $n= dim \ X$ and concluded that if $n \ge 6$, $K_{\Xb}$ is ample. Similarly, Cadorel \cite{cadorel2021subvarieties} showed that $V\subset \Xb$ with $dim \ V \ge 6$ is of general type if $V \not \subset D$.


In general, for a smooth variety with dimension greater than 1, the log-cotangent bundle is never ample as its restriction to the boundary is an extension of the trivial bundle.(\cite{Ascher}, \cite{Brotbek}). For a toroidal compactification of a locally symmetric domain, Brunebarbe \cite{brunebarbe2018symmetric} proved that $\lgcot$ is big. In the case of ball quotients,  
Theorem \ref{SlopLog} demonstrates that $\lgcot$ is a limit of ample $\Q$-vector bundles. Moreover, we can deduce from Theorem \ref{SlopLog} the following Corollary:
\begin{Corollary} \label{logPositivity}
With $X$ and $\Xb$ as in Theorem \ref{SlopLog},
$\lgcot$ is ample modulo $D$ and  nef.
\end{Corollary}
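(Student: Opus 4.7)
The plan is to deduce both assertions directly from Theorem \ref{SlopLog} by passing to the projectivized bundle $\pi: \mathbb{P}(\lgcot) \to \Xb$ with its tautological line bundle $\mathcal{O}_{\mathbb{P}(\lgcot)}(1)$. By definition of the $\mathbb{Q}$-twist, Theorem \ref{SlopLog} says that for every sufficiently small rational $r > 0$, the $\mathbb{Q}$-line bundle
\[ L_{r} := \mathcal{O}_{\mathbb{P}(\lgcot)}(1) - r\,\pi^{*}D \]
is ample on $\mathbb{P}(\lgcot)$.

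For the nefness, the classes $[L_{r}]$ with $r \in (0, d/2\pi)$ lie in the ample cone of $N^{1}(\mathbb{P}(\lgcot))_{\mathbb{R}}$ and satisfy $[L_{r}] \to [\mathcal{O}_{\mathbb{P}(\lgcot)}(1)]$ as $r \to 0^{+}$. Since the nef cone is the closure of the ample cone, $\mathcal{O}_{\mathbb{P}(\lgcot)}(1)$ is nef, which by definition is the nefness of $\lgcot$.

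For the ampleness modulo $D$, I would fix any sufficiently small rational $r_{0} > 0$ and exploit the decomposition
\[ \mathcal{O}_{\mathbb{P}(\lgcot)}(1) = L_{r_{0}} + r_{0}\,\pi^{*}D, \]
an ample $\mathbb{Q}$-divisor plus an effective $\mathbb{Q}$-divisor supported on $\pi^{-1}(D)$. For $0 < \epsilon \ll 1$ one computes
\[ \mathcal{O}_{\mathbb{P}(\lgcot)}(1) - \epsilon\,L_{r_{0}} = (1-\epsilon)\,L_{r_{0}} + r_{0}\,\pi^{*}D, \]
which remains of the form (ample) $+$ (effective supported on $\pi^{-1}(D)$); the usual argument (tensoring a section cutting out the effective part with sections of a large multiple of the ample part) then places its stable base locus inside $\pi^{-1}(D)$. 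By the standard identity $\mathbb{B}_{+}(\mathcal{O}_{\mathbb{P}(\lgcot)}(1)) = \mathbb{B}(\mathcal{O}_{\mathbb{P}(\lgcot)}(1) - \epsilon\,L_{r_{0}})$ for $0 < \epsilon \ll 1$, this yields $\mathbb{B}_{+}(\mathcal{O}_{\mathbb{P}(\lgcot)}(1)) \subseteq \pi^{-1}(D)$, which is precisely the statement that $\lgcot$ is ample modulo $D$ in the sense of Definition \ref{ModDDef}.

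I do not expect any substantial obstacle: both conclusions are essentially formal consequences of Theorem \ref{SlopLog}, and the only care needed is to unpack the $\mathbb{Q}$-twist convention and to invoke the augmented base locus characterization of ``ample modulo $D$''.
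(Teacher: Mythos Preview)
Your nefness argument is correct and matches the paper's reasoning: the paper phrases it as a curve-by-curve contradiction, you phrase it as ``limit of ample classes lies in the nef cone'', but these are the same observation.

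For ``ample modulo $D$'' there is a genuine gap. You correctly show that $\mathbb{B}_{+}\bigl(O_{\Proj(\lgcot)}(1)\bigr)\subseteq\pi^{-1}(D)$, but you then assert that this \emph{is precisely} the statement that $\lgcot$ is ample modulo $D$ in the sense of Definition~\ref{ModDDef}. The paper only proves the forward implication (Proposition~\ref{Aug}): ample modulo $D$ implies $\mathbb{B}_{+}\subseteq\pi^{-1}(D)$. You are invoking the converse, which is neither stated nor proved in the paper, and Definition~\ref{ModDDef} is a ``for every coherent sheaf $\mathscr{F}$'' condition on $\Xb$, not a base-locus condition on $\Proj(\lgcot)$. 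Even if this converse is true, it requires an argument you have not supplied.

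The paper sidesteps this entirely. It observes that $\lgcot\langle -r_0 D\rangle$ is ample (hence in particular ample modulo $D$) for some rational $r_0>0$, and then applies Lemma~\ref{smallertwist} with $s=0$: since $\lgcot\langle -r_0 D\rangle$ is a $\Q$-subbundle of $\lgcot$ agreeing with it over $X$, Lemma~\ref{PositiveSub} transports the ``ample modulo $D$'' property directly, working with symmetric powers on $\Xb$ and never leaving Definition~\ref{ModDDef}. Concretely, sections of $S^{b}\lgcot\otimes O_{\Xb}(-aD)$ multiplied by the canonical section of $O_{\Xb}(aD)$ give the required sections of $S^{b}\lgcot$; this is exactly the ``tensoring with a section of the effective part'' step you sketch, but carried out on $\Xb$ so that the conclusion is literally Definition~\ref{ModDDef} rather than a statement about $\mathbb{B}_{+}$ on the projectivization. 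Your argument can be repaired by making this same move downstairs instead of passing through the augmented base locus.
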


In particular, Corollary \ref{logPositivity} implies that
$\lgcot$ is big and nef, which Cadorel also recently proved (\cite{Cad}). He used the positivity of the Bergman metric on the open part together with a result of \cite{boucksom2002volume} to show that the volume of the line bundle $O(1)$ over $\Proj(\Cot)$ must be positive which implies that $O(1)$ is big. Similarly, he proved that $O(1)$ over $\Proj(\Cot)$ is nef by showing that the volume of the pull back of $O(1)$
over $\Proj(\Cot)$ to every subvariety of $\Xb$ not contained in $D$ is positive.    

The volume of a line bundle $L$ on a projective variety $V$ of dimension $m$ is defined to the non-negative real number 
$$\operatorname{vol}_{V}(L):=\limsup\limits_{k\rightarrow \infty}\dfrac{h^0(V, L^{\otimes k})}{k^m/m!}$$
which measures the asymptotic growth rate of the pluri-canonical linear series $|kL|.$

 Brunebarbe \cite{brunebarbe2020increasing} used the volume of the canonical bundle of a variety to measure its hyperbolicity. In the setting of Shimura varieties (actually in more general setting, see \cite[Theorem 1.1.]{brunebarbe2020increasing}), he proved that the minimum volume of subvarieties of $Y(p),$ $Y$ with full level structure, tends to infinity.  As a generalization of Brunebarbe's work \cite{brunebarbe2020increasing}, for cofinal towers of normal coverings of ball quotients we prove the following Corollary:

\begin{Corollary} \label{VolIn} Let $X$ be a complex hyperbolic manifold with cusps and $\{X_i\}_{i=1}^{\infty}$ be a cofinal tower of normal coverings of $X=X_1.$ 
 Then, given a positive number $v$, for all but finitely many $i,$ every subvariety $V$ of $X_i$ containing a cusp of $X_i$ have $vol(K_V)>v.$ 
\end{Corollary}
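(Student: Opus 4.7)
The plan is to bound $vol(K_V)$ from below by invoking Corollary \ref{HyperVarIn} with increasingly large values of $r$ made allowable by Lemma \ref{tower}, and then to read off the lower bound from a short intersection-theoretic computation.

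Given $v > 0$, I would first pick an integer $r > v$. Lemma \ref{tower} guarantees that $d_i \to \infty$ along the tower, so $r < \lfloor (d_i - 1)/2\pi \rfloor$ for all but finitely many $i$; fix such an $i$. Let $V \subset \Xb_i$ be a subvariety meeting both $X_i$ and $D_i$, and assume first that $V$ is smooth of dimension $m$. By Corollary \ref{HyperVarIn}, both $K_V$ and $A := K_V - (r-1)\,D_{|V}$ are ample Cartier divisors on $V$, so $vol(K_V) = K_V^m$, and intersecting the identity $K_V = A + (r-1)\,D_{|V}$ with $K_V^{m-1}$ yields
\begin{equation*}
K_V^m \;=\; K_V^{m-1}\cdot A \;+\; (r-1)\,K_V^{m-1}\cdot D_{|V}.
\end{equation*}
The first summand is an intersection of $m$ ample Cartier divisors on an $m$-dimensional projective variety, hence a positive integer, and so $\geq 1$. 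For the second, the hypothesis $V \cap D_i \neq \emptyset$ makes $D_{|V}$ a nonzero effective Cartier divisor; on each of its irreducible components $D'$ the restriction $K_V|_{D'}$ is ample and therefore has positive $(m-1)$-fold self-intersection, an integer and hence $\geq 1$, so $K_V^{m-1}\cdot D_{|V} \geq 1$. Combining these gives $vol(K_V) \geq r > v$, which is the desired conclusion for smooth $V$.

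The main obstacle I expect is removing the smoothness assumption on $V$. Here I would take a log resolution $\pi \colon \tilde V \to V$, use the birational invariance $vol(K_V) = vol(K_{\tilde V})$, and exploit the functoriality of $\Omega^{1}(\log)$ under pull-back to obtain an analog of Corollary \ref{HyperVarIn} on $\tilde V$ from the positivity of $\lgcot \la -r D \ra$ on $\Xb_i$. The delicate point, where essentially all the technical care of the argument is concentrated, is to check that after absorbing the exceptional divisors of $\pi$ into the ample summand of the decomposition of $K_{\tilde V}$, the key integrality inequality $K_{\tilde V}^{m-1} \cdot \pi^{*}D_{|V} \geq 1$ still holds and the same intersection computation goes through verbatim on $\tilde V$.
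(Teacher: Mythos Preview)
Your argument for smooth $V$ is correct and follows the same overall strategy as the paper: combine Lemma~\ref{tower} with the ampleness statement of Corollary~\ref{HyperVarIn}, then read off a volume lower bound by intersection theory. The paper, however, does not do the two-term decomposition $K_V^m = K_V^{m-1}\cdot A + (r-1)\,K_V^{m-1}\cdot D_{|V}$; instead it quotes Corollary~\ref{CanonicalVol}, whose proof expands $(K_V-(r-1)D')^m>0$ binomially, uses that each cross term $K_V^{m-i}\cdot(-D')^i$ is $\le 0$ (since $K_{V|D'}$ and $-D'_{|D'}$ are ample), and combines with $(-D')^m\le -l(m-1)!$ to obtain the sharper bound $vol(K_V)\ge(r-1)^m\,l\,(m-1)!$. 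Your linear bound $vol(K_V)\ge r$ is weaker but entirely sufficient for the corollary, and your computation is shorter.

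On the non-smooth case: you are right to flag it, but note that the paper itself does not address it either---Corollary~\ref{CanonicalVol} is proved only for smooth $V$ (it invokes Corollary~\ref{HyperVar}, which carries the smoothness hypothesis), and the one-line proof of Corollary~\ref{VolIn} simply cites Corollary~\ref{CanonicalVol} and Lemma~\ref{tower}. So the paper is implicitly restricting to smooth $V$, and your proposal already matches what the paper actually proves. Your sketched log-resolution approach is the natural way to extend the statement, but it goes beyond what the paper does; you need not pursue it to match the paper's argument.
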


Indeed, we show that the volume of the canonical bundle (and the log-canonical bundle) of subvarieties of $X$ containing a cusp of $X,$ is controlled by the uniform depth of cusps (see Corollary \ref{Vol} and Corollary \ref{CanonicalVol}).

Using results of $L^2$-estimates for a K\"ahler manifold, Wong \cite{Wong} proved that there exist a constant $r (n)$ and $d(n)$ depending only on the dimension of $X$, such that if the injectivity radius is larger than $r(n)$ and the uniform depth of cusps is larger than $d(n)$, then $\Cot$ is ample modulo $D$. His method relies on the existence theorem of $L^2$-estimation (\cite{andreotti1965carleman,MR179443}).

Theorem \ref{SlopLog} and Theorem \ref{SemiCot} substantially improve the previous results of Cadorel and Wong for the positivity of $\lgcot$ and $\Cot$, and our proofs are independent from their proofs.

Our method to prove Theorem \ref{SlopLog} is to construct a singular hermitian metric on $L:=O(1)$ over $\Proj(\lgcot)$ with the Bergman metric $h$ induced on $L$ and a carefully chosen weight function. To obtain the positivity of the twisted bundle, there has to be a trade-off between the positivity of $h$ and the weight function. This trade-off comes from the inequality we proved between $c_1(L_{|\pi^{-1}(X)},h)$ and a positive $(1,1)$-form   
 (see proofs of Proposition \ref{Positve intersection} and Theorem \ref{PositveLog}). The strength of our method is that it gives a criteria for the positivity of $\Cot$ explicitly and intrinsically depending on $X.$ This criteria can be checked for a given $X.$

We prove the semi-ampleness of $\Cot$ in Theorem \ref{SemiCot} by a hybrid
technique using both the results coming from the differential geometry of the toroidal compactification\textemdash namely, the positivity of the twisted cotangent bundle\textemdash and the results coming from the algebraic geometry of the toroidal compactification.   
(see proof of Proposition \ref{NefBoundry} and Theorem \ref{CotPos} for more details).


\numberwithin{theorem}{section}

\section*{Outline of the paper} In \textsection \ref{back}, we describe the toroidal compactification of the ball quotients and necessary background on the Siegel domain. In \textsection \ref{Sec2}, we study the properties of the Bergman metric induced on $O_{\Proj(\Omega^1_{\Xb}(\log(D))}(1).$ In \textsection \ref{Sec4}, we prove some basic lemmas about $\Q$-vector bundles. In \textsection \ref{sec3}, we prove the results related to the positively of $\lgcot$ including Theorem \ref{SlopLog} and Corollary \ref{logPositivity}. In \textsection \ref{sec3}, we prove the results related to the positivity of $\Cot$, namely Theorem \ref{SemiCot} and Theorem \ref{CotPositivity}. 
Finally, in \textsection \ref{sec3}, we deduce the applications to hyperbolicity of subvarieties in Corollary \ref{HyperVarIn}, Corollary \ref{GenSubInt} and Corollary \ref{VolIn}.

\section*{Acknowledgements}
I would like to thank my advisor Jacob Tsimerman
for introducing me to this problem, and for his constant support, as well as many insightful discussions. Also, I would like to thank Steven Lu and Luca Di Cerbo for their comments on an earlier draft of this paper.

\section{Background} \label{back}
\subsection{Complex Unit Ball}
The complex unit ball 
$$\mathbb{B}^n= \{z\in \C^n\ |\ |z|^2<1\}$$
has holomorphic automorphism group $G:=\operatorname{PU}(n,1)$. Let $\Gamma \subset G$ be a cofinite-volume discrete group which does not have any elliptic element. Therefore, $X:= \mathbb{B}^n/ \Gamma$ is a complex manifold with cusps, where cusps correspond to conjugacy class of maximal parabolic subgroups of $\Gamma$.

The Baily-Borel compactification \cite{baily1966compactification, siu2016compactification} of $X$ is obtained by adding a finite set of points in corresponding to the equivalences
classes of the rational boundary components of $\mathbb{B}^n$ under the action of $\Gamma.$
This compactification is a normal projective variety and we will refer to the the points added to $X$ as cusps of $X$.

The Bergman metric on $\mathbb{B}^n$ is the $\operatorname{PU}(n,1)-$invariant hermitian metric with holomorphic sectional curvature of $-1$ given by

$$h=4.\frac{(1-|z|^2).
\sum_{i}dz_i \otimes d\bar{z_i}+\sum_{i}\bar{z_i}dz_i \otimes \sum_{i}z_i d\bar{z}_i}{(1-|z|^2)^2}.$$ 

\subsection{Siegel Domain}
To understand the behaviour of Bergman metric around cusps, we found it more convenient to pass to the Siegel model $\Sn$ which is diffeomorphic to $\mathbb{B}^n$ and described by the holomorphic coordinates $(\zeta, z) $ of $\C^{n-1} \times \C$ in the following way:

$$\Sn:=\{(\zeta,z)\in \C^{n-1} \times \C \ | \ \operatorname{Im}(z)>|\zeta|^2 \}.$$

Let $u:= Im(z)-|\zeta|^2$ be the height coordinate on $\Sn$. The Kähler form of the Bergman metric on $\Sn$ is given by 
\begin{align}
    w_{\Sn}:= -2i\del\delb\log(u)
\end{align}
(see, for example, \cite[Lemma 2.1]{bakker2018kodaira}), and more explicitly we can write $\omega_{\Sn}$ in terms of $(\zeta,z)$-coordinates

\begin{align}
  w_{\Sn}
=
&2iu^{-2} \cdot \sum_{i=1}^{n-1}\sum_{k=1}^{n-1}(u \delta_{ik}
+\zeta_i \bar{\zeta_k}) d\zeta_{k}  \wedge d\bar{\zeta_{i}}\\ \nonumber
&+
u^{-2}\cdot \sum_{j=1}^{n-1}(
\bar{\zeta_{j}}  d\zeta_j \wedge  d\bar{z}
-
\zeta_j  dz  \wedge d\bar{\zeta_{j}}
)\\ \nonumber
&-
\frac{i}{2}u^{-2} dz \wedge  d\bar{z},  
\end{align}
where $\delta_{ik}$ is the Kronecker delta function. Using holomorphic coordinates $\zeta$ and $z$ on $\Sn$, we can set a holomorphic coordinates $(\zeta, z, \xi, w)$ on $\Omega^1_{\Sn},$ where $\xi=(\frac{\del}{\del \zeta_1}, \frac{\del}{\del \zeta_2},..., \frac{\del}{\del \zeta_{n-1}})$ and $w=\frac{\del}{\del z}$. Using the frame $e_i=\xi_{i}$ for $1 \le i\le n-1$ and $e_{n}=w$, the hermitian matrix of the Bergman metric is  

$$[h(e_i, \bar{e_j})]=
2\begin{bmatrix}
u^{-1}+u^{-2}|\zeta_1|^2& u^{-2}\zeta_{1}\bar{\zeta}_{2}& ...&u^{-2}\zeta_{1}\bar{\zeta}_{n-1}&\frac{1}{2i}u^{-2}\zeta_1\\

u^{-2}\zeta_{2}\bar{\zeta_{1}}&u^{-1}+u^{-2}|\zeta_2|^2& ... & u^{-2}\zeta_{2}\bar{\zeta}_{n-1}&\frac{1}{2i}u^{-2}\zeta_2\\

...&...& ... & ... & ...\\

u^{-2}\zeta_{n-1}\bar{\zeta}_{1} & u^{-2}\zeta_{n-1}\bar{\zeta}_{2}&...&u^{-1}+u^{-2}|\zeta_{n-1}|^2&\frac{1}{2i}u^{-2}\zeta_{n-1}\\

-\frac{1}{2i}u^{-2}\bar{\zeta_1} & -\frac{1}{2i}u^{-2}\bar{\zeta_2} &...&-\frac{1}{2i}u^{-2}\bar{\zeta}_{n-1}&\frac{1}{4}u^{-2}
\end{bmatrix}.$$

Siegel model has a preferred cusp at infinity whose parabolic stabilizer group $G_{\infty}$ is generated by Heisenberg isometries $\operatorname{I}_{\infty}$ and a one-dimensional torus $T$. Heisenberg isometries consists of Heisenberg Rotations $\operatorname{U}(n-1)$ and Heisenberg translations $\mathfrak{N}$. The group of Heisenberg Rotations identifies with $\operatorname{U}(n-1)$ acting on $\zeta$-coordinates of $\Sn$ in the usual way and The group of Heisenberg translations $\mathfrak{N} \cong \C^{n-1} \times \mathbb{R}$ acts on $\Sn$ via

$$(\tau, t): (\zeta, z) \longrightarrow (\zeta', z'):=\bi(\zeta+\tau, z+t+i|\tau|^2 + 2i(\tau, \zeta)\bi). $$
In particular, the elements $(0,t)\in \mathfrak{N}$ will be referred to as a vertical translation by $t$, and the subgroup generated by $(0,t)$ in $G_\infty$ will be denoted by $\operatorname{V}_{\infty}$.
We use $(A,\tau,t)  \in \operatorname{U}(n-1) \ltimes \mathfrak{N}$ to denote the transformation acting by 
$$
(A,\tau,t)\cdot (\zeta, z) \longrightarrow \bi(A\zeta+\tau, z+t+i|\tau|^2+2i(A\zeta, \tau)\bi ), 
$$
where the standard positive definite hermitian form $(.,.)$ chosen on $\C^{n-1}$. Using the chain rule we can observe that the action of $(A,\tau,t)$ induced on the cotangent bundle $\Omega^{1}_{\Sn}$ is
\begin{align} \label{ActonCot}
(A,\tau,t).(\zeta,z,\xi, w)
=
\bi (A\zeta+\tau, z+t+i|\tau|^2+2i(A\zeta, \tau), A\xi+2i(A\xi, \tau), w \bi ).
\end{align}
The vertical translation $\operatorname{V}_{\infty}$ is the center of $G_{\infty}$ and the quotient $\operatorname{V}_{\infty}\backslash \operatorname{I}_{\infty}$ is isomorphic to the unitary transformation of 
$\C^{n-1}$. 
The remaining generators of $G_{\infty}$ is the one-dimensional torus $T$ which acts by scaling  $$(\zeta,z)\to (a\zeta, a^2 z).$$     

\subsection{Depth of cusps}
For $u_i>0,$ the horoball centered at the cusp at infinity with height $u_i$ is the open set
$$B(u_i):=\{(\zeta,z)\in \Sn \ | \ u>u_i\}.$$
Note that the height coordinate $u$ on $\Sn$ is invariant under the action of Heisenberg rotations $\operatorname{U}(n-1)$ and Heisenberg translations $\mathfrak{N}$ because $\operatorname{U}(n-1)$ fixes $|\zeta|$ and $z,$ and  $$\operatorname{Im}(z')-|\zeta'|^2= \operatorname{Im}(z)-|\zeta|^2.$$ It follows that the horoball is invariant with respect to the action of Heisenberg rotations and Heisenberg translations and the one-dimensional torus $T$ scales $B(u)$ to $B(au)$. 

\begin{definition}
For a cusp $c_i$, there exists a  $g\in G$ translating $c_i$ to $\infty$. Let $\Gamma'_{i}:=(g\cdot\Gamma\cdot g^{-1})_{\infty}$  be the parabolic stabilizer of $\infty.$ The smallest $u_i$ such that $\Gamma'_{i} \backslash B(u_i)$ injects into $X$ will be called the height of the $c_i$. We will refer to $H_i(u)=\Gamma'_{i} \backslash B(u)$ for $u>u_i$ as a horoball neighborhood of $c_i$.
\end{definition}

Thanks to the Shimuzu's lemma, $H_i(u_i)$ must injects into $X$ for a large enough $u_i$.

\begin{definition}\label{DepDef} Suppose $u_i$ is the height of cups $c_i,$ and $t_i$ is the shortest vertical translation in the parabolic stabilizer of $c_i$. The number $d_i=t_i/u_i$ is called the depth of $c_i$ which is invariant under conjugating the lattice $\Gamma.$
\end{definition}

\begin{definition}\label{UniDep}(\cite[Definition 3.7.]{bakker2018kodaira}) The uniform depth of cusps of $X$ is the largest $d$ satisfying the following properties, setting $v_i=t_i/d$:
\begin{enumerate}
     \item for every $i$, $v_i\le u_i$ (this gives that $H_i(v_i)$ injects into $X$).     
     \item all $H_i(v_i)$ are disjoint.  
\end{enumerate}

  Thanks to Parker's generalization of Shimizu's lemma \cite[Proposition 2.4.]{parker1998volumes}, the uniform depth of cups is at least $2$ for torsion free.  
\end{definition}

\begin{Remark}
By passing to a cover of $X$, i.e., passing to a subgroup of $\Gamma$, the shortest vertical translations of parabolic subgroups can not decrease and the heights can not increase. Therefore, the canonical depth can not decrease by passing to a cover of $X$.       
\end{Remark}

\begin{definition}(see \cite[Section 2]{yeung1994betti}) \label{Tow} A cofinal normal tower of $X$ is 
a sequence  $\displaystyle \{X_i\}_{i=1}^{\infty}$ of \'etale Galois coverings of $X=X_1$ given by a sequence of lattices $\displaystyle \{\Gamma_i\}_{i=1}^{\infty}$ such that for each subsequence $\displaystyle \{\Gamma_j\}_{j=1}^{\infty},$ one has  
$\bigcap_{j=1}^{\infty} \Gamma_{j}= \{1\}.$ 
\end{definition}
Similar to \cite[Lemma 1.2.2.]{yeung1994betti} and \cite[Proposition 2.1.]{hwang2006uniform}, we can prove that the uniform depth of cusps tends to infinity in cofinal normal towers:    
\begin{Lemma} \label{tower}
Let $\{X_{i}\}_{i=1}^{\infty}$ be a cofinal normal tower of $X$ and $d(X_i)$ be the uniform depth of cusps of $X_i.$ Then, 
$$\lim_{i \to \infty} d(X_i) = \infty.$$
\end{Lemma}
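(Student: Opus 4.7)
The plan is to derive $d(X_i)\to\infty$ from the growth of the shortest vertical translation length at each cusp, which itself follows from the Galois hypothesis together with the assumption that every nontrivial $g\in\Gamma$ eventually leaves $\Gamma_i$, and then to transport the uniform depth configuration of $X$ up to $X_i$ with uniformly shrunken horoballs. Since $X$ has only finitely many cusps, it will suffice to control each cusp of $X$ separately and take a minimum.

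Fix a cusp $c_0$ of $X$ and conjugate so that it lifts to $\infty$ in the Siegel model, so $\Gamma_\infty=\Gamma\cap G_\infty$ and the vertical translations in $\Gamma$ at $\infty$ form $V_\infty\cap\Gamma=t_0^{(c_0)}\mathbb{Z}$. A cusp $c'=\gamma\infty$ of $X_i$ lying above $c_0$, conjugated back to $\infty$ by $\gamma^{-1}$, has parabolic stabilizer $\gamma^{-1}\Gamma_i\gamma\cap\Gamma_\infty$; by the Galois hypothesis $\Gamma_i\trianglelefteq\Gamma$ this equals $\Gamma_i\cap\Gamma_\infty$, independent of the representative $\gamma$. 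Hence every cusp of $X_i$ over $c_0$ has the same shortest vertical translation length $t_i^{(c_0)}$, realised by the generator of the cyclic group $\Gamma_i\cap V_\infty\subseteq t_0^{(c_0)}\mathbb{Z}$. For any fixed $N$, the set $\{v_{kt_0^{(c_0)}}:1\le k\le N\}$ is finite, and the hypothesis that $\bigcap_j\Gamma_j=\{1\}$ along every infinite subsequence forces this finite set to be disjoint from $\Gamma_i$ for all sufficiently large $i$, so $t_i^{(c_0)}\to\infty$.

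Next, I set
\[
D_i\;:=\;d(X)\cdot\min_{c_0}\frac{t_i^{(c_0)}}{t_0^{(c_0)}},
\]
the minimum running over the finitely many cusps of $X$. For each cusp $c'$ of $X_i$ lying above $c_0$, the prescribed horoball size $v_{c'}:=t_{c'}/D_i=t_i^{(c_0)}/D_i$ satisfies $v_{c'}\ge t_0^{(c_0)}/d(X)=:v_{c_0}^{(X)}$, the size witnessing the uniform depth $d(X)$ of $X$. Since $v_{c_0}^{(X)}\ge u_{c_0}^{(X)}$ by definition of the uniform depth of $X$, and the Remark preceding this lemma gives $u_{c'}\le u_{c_0}^{(X)}$, we obtain $v_{c'}\ge u_{c'}$, so $H_{c'}(v_{c'})$ injects into $X_i$. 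Pairwise disjointness in $X_i$ is inherited from $X$: in $\mathbb{B}^n$ the union $\bigcup_{\gamma\in\Gamma}\gamma B(v_{c_0}^{(X)})$ over all parabolic fixed points of $\Gamma$ is disjoint, so the smaller union $\bigcup_{\gamma\in\Gamma}\gamma B(v_{c'})$ is too, and its image in $X_i=\Gamma_i\backslash\mathbb{B}^n$ is the disjoint family $\{H_{c'}(v_{c'})\}$. Therefore $d(X_i)\ge D_i$, and as each $t_i^{(c_0)}\to\infty$, $D_i\to\infty$.

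The key point that makes the argument short is the Galois hypothesis, which collapses the a priori $c'$-dependent vertical translation length at cusps of $X_i$ into a single quantity $t_i^{(c_0)}$ depending only on the cusp $c_0$ of $X$ below; this in turn is controlled directly via $\bigcap\Gamma_j=\{1\}$ applied to a finite set of candidate short vertical translations. Without normality one would need to rule out short vertical translations in infinitely many conjugates $\gamma^{-1}\Gamma_i\gamma$ simultaneously, which the bare intersection hypothesis does not provide, and this is the main technical obstacle the Galois assumption circumvents.
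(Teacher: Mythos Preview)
Your proof is correct and follows the same strategy as the paper: use normality so that all cusps of $X_i$ over a fixed cusp $c_0$ of $X$ share the same shortest vertical translation $t_i^{(c_0)}$, use the tower hypothesis $\bigcap_j\Gamma_j=\{1\}$ to force $t_i^{(c_0)}\to\infty$, and bound the heights of cusps of $X_i$ by those of $X$. If anything, your argument is more careful: you explicitly verify the disjointness condition in the definition of uniform depth by pulling the horoball configuration witnessing $d(X)$ back to $X_i$, whereas the paper simply asserts that the uniform depth of $X_i$ coincides with the depth at each cusp.
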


\begin{proof} Let $p_i: X_i \to X $ be the covering map. Fix a cusp $c_1$ of $X.$ Since $X_i$ is a normal cover of $X,$ the group 
$\Gamma$ acts transitively on all cusps of $X_i$ that get sent to $c_1,$ and therefore the uniform depth can be computed by taking only one cusp over each cusp of $X.$ Let $c_i$ be a cusp of $X_i$ such that $p_i(c_i)=c_1.$ Let $u(c_i)$ be the height of cusp $c_i,$ and $t(c_i)$ be the shortest vertical translation in the stabilizer of $c_i.$ Since $u(c_i)\le u(c_1), $ it is sufficient to show that $t(c_i)$ tends to infinity. Note that as $\Gamma
$ is discrete, for every $t_i$ there only exists finitely many elements in $\Gamma$ with the length smaller than or equal to $t_i.$ On the other hand, for any subsequence $\displaystyle \{\Gamma_j\}_{j=1}^{\infty},$ we have   
$\bigcap_{j=1}^{\infty} \Gamma_{j}= \{1\}.$ Putting these together, we conclude that the length of the shortest element except $1$ in $\Gamma_i$ tends to infinity.    

\end{proof}
\begin{Remark}\label{modularcurve} In general, the uniform depth of cusps tends to infinity in most of the towers of covering, even in the case that ramification index of some cusps are fixed in the tower. More precisely, for  towers that the traces of hyperbolic elements tends to infinity, the uniform depth tends to infinity. This is proved in \cite[section 3]{memarian2023volumes}, and here we examine the special case of modular curves $X_0(p),$ where $p$ is a prime number or $1$. 
Consider the lattice 
\begin{align*}
    \Gamma_{0}(p)= \Big\{ \begin{pmatrix}
a & b \\
c & d 
\end{pmatrix}\in \operatorname{PSL}_2(\Z) \mid
\begin{pmatrix}
a & b \\
c & d 
\end{pmatrix} \cong
\begin{pmatrix}
* & * \\
0 & * 
\end{pmatrix}
 mod \ p \Big\}. 
\end{align*}

The group $\operatorname{PSL}_2(\R)$ acts on $\mathbb{H}$ with the linear fractional translation and the action naturally extends to $\mathbb{H}^*=\mathbb{H}\cup \Q \cup \{\infty\}.$ 
Let $$X_0(p):=\Gamma_{0}(p) \backslash \mathbb{H}^*,$$
and consider the covering $X_{0}(p)\to X_{0}(1).$ It is well-know that for prime $p,$ the curve $X_0(p)$ has only two inequivalent cusps, see \cite[pg 26]{shimura1971introduction}, and $0$ and $\infty$ represent these two inequivalent cusps of $X_0(p).$ We will denote the stabilizer of $0$ and $\infty$ in $\Gamma_{0}(p)$ by $\operatorname{stab}_{\Gamma_{0}(p)}(0)$ and $\operatorname{stab}_{\Gamma_{0}(p)}(\infty):$ 
$$
\operatorname{stab}_{\Gamma_{0}(p)}(0)=\Big\{ \begin{pmatrix}
1 & 0 \\
pk & 1 
\end{pmatrix} \mid
k \in \Z  \Big\},
\
\operatorname{stab}_{\Gamma_{0}(p)}(\infty)=\Big\{ \begin{pmatrix}
1 & b \\
0 & 1 
\end{pmatrix} \mid b\in \Z
  \Big\}.$$

Therefore, the ramification indices of $0$ and $\infty$ over $X_0(1)$ are $p=[\operatorname{stab}_{\Gamma_{0}(p)}(0):\operatorname{stab}_{\Gamma_{0}(1)}(0)]$ and $1=[\operatorname{stab}_{\Gamma_{0}(p)}(\infty):\operatorname{stab}_{\Gamma_{0}(1)}(\infty)],$ respectively.

Consider $g=\begin{pmatrix}
a & b \\
c & d 
\end{pmatrix}\in \Gamma_{0}(p)$ which is not in $\operatorname{stab}_{\Gamma_{0}(p)}(\infty).$ It is straightforward to check that for every $z\in \mathbb{H},$ we have $\operatorname{Im}(g(z))\cdot \operatorname{Im}(z)\le \frac{1}{|c|^2}.$ Since $|c|\ge p,$ this implies that the horoball $\operatorname{stab}_{\Gamma_0(p)}(\infty)\backslash \{\operatorname{Im}(z)> \frac{1}{p}\}$ injects to $\Gamma_0(p) \backslash \mathbb{H}^1$ and therefore the height of $\infty$ is at most $\frac{1}{p}.$ Since the length of shortest vertical translation around $\infty$ is $1,$ we get that the depth of $\infty$ is at least $p.$ To find the depth of $0,$ consider the element $h=\begin{pmatrix}
0 & 1 \\
-1 & 0 
\end{pmatrix}\in \operatorname{PSL}_2(\Z)$ which sends $0$ to $\infty.$ Let 
$\operatorname{stab}'_{\Gamma_{0}(p)}(0)$ and $\Gamma'_0(p)$ be the conjugation of $\operatorname{stab}_{\Gamma_{0}(p)}(0)$ and $\Gamma_0(p)$ by $h,$ respectively. Then, we have
\begin{align*}
    \Gamma_{0}'(p)= \Big\{ \begin{pmatrix}
a' & b' \\
c' & d' 
\end{pmatrix}\in \operatorname{PSL}_2(\Z) \mid
\begin{pmatrix}
a' & b' \\
c' & d' 
\end{pmatrix} \cong
\begin{pmatrix}
* & 0 \\
* & * 
\end{pmatrix}
 mod \ p \Big\}, 
\end{align*}
$$\operatorname{stab}'_{\Gamma_{0}(p)}(0)=\Big\{  \begin{pmatrix}
 1 & pk \\
0 & 1 
\end{pmatrix} \mid
k \in \Z  \Big\}.$$
Similarly, for every $g'=\begin{pmatrix}
a' & b' \\
c' & d' 
\end{pmatrix}\in \Gamma'_{0}(p)$ which is not in $\operatorname{stab}'_{\Gamma_{0}(p)}(0),$ and every $z\in \mathbb{H},$ we have that $\operatorname{Im}(g'(z))\cdot \operatorname{Im}(z)\le \frac{1}{|c'|^2}.$  Therefore, the horoball $\operatorname{stab}'_{\Gamma_0(p)}(\infty)\backslash \{\operatorname{Im}(z)> 1\}$ injects to $\Gamma'_0(p) \backslash \mathbb{H}^1.$  This implies that the height of $0$ is at most $1$ and because the length of shortest vertical translation is $p,$ we get that the depth of $0$ is at least $p.$   
Since the set $\{\operatorname{Im}(z)> \frac{p}{d}\}$ is disjoint from 
the set  $\{\operatorname{Im}(z)< \frac{p}{d}\},$ for every $d\le \sqrt{p},$ we can conclude that the uniform depth of cusps of $X_0(p)$ is $\sqrt{p}.$



\end{Remark}

\subsection{Toroidal Compactification}

 The variety $X$ has a unique toroidal compactification in the sense of \cite{Cmpftification, MOK}.
This torodial compactification is a smooth projective variety and each connected component of the boundary divisor $D$ is an \'etale quotient of an abelian variety whose conormal bundle is ample. Moreover, if $\Gamma$ has only unipotent parabolic subgroups(in particular when $\Gamma$ is neat), then $D$ is disjoint union of abelian varieties with ample conormal bundle. Note that $\Gamma$ always has a finite-index neat subgroup by \cite{Cmpftification} in the arithmetic case and by \cite{hummel1998rank} in general.     

We describe the toroidal compactification $\Xb$ of $X$ locally around the cusp at $\infty$. Consider the holomorphic map 
\begin{align*}
    \Sn \ &\longrightarrow \C^{n-1}\times \Delta^{*} \\
    (\zeta, z) &\longrightarrow (\zeta, e^{2\pi i z/t_{\infty}}),
\end{align*}
which is surjective and invariant under the action of $W_{\infty}:=V_{\infty}\cap \Gamma$. This map identifies the partial quotient $W_{\infty}\backslash \Sn$ by its image. Since $\Lambda_{\infty}=W_{\infty} \backslash \Gamma_{\infty}$ is identified to a discrete unitary transformation of $\C^{n-1}$, the boundary divisor in the compactification of  $\Gamma
_{\infty}\backslash \Sn$ is given by $\Lambda_{\infty} \backslash \C^{n-1}\times 0$. Note that not having an orbifold point on the compactification is equivalent to not having a torsion element in $\Lambda_{\infty}$.

Therefore, assuming that $\Xb$ does not have an orbifold point gives that the residual quotient of $\C^{n-1}\times \Delta$ by $\Lambda_{\infty}$ is locally \'etale quotient and the coordinates $\zeta, q=e^{2\pi i z/t_{\infty}}$ extends to coordinates of $\Xb$ around the boundary such that the boundary divisor in this neighborhood is cut out by $q=0$. We will denote the connected component of $D$ by $D_i$ which is an \'etale quotient of an abelian variety with ample conormal bundle(\cite{MOK}). In the case that 
the parabolic subgroups of $\Gamma$ are unipotent (e.g. $\Gamma$ is neat), the boundary  is a disjoint union of abelian varieties.
\section{Positivity of $O_{\Proj(\Omega^1_X)}(1)$}\label{Sec2}

Let $E$ be a vector bundle of rank $n$ on an algebraic variety or complex manifold $Y$ and let 
$$\pi:\Proj(E)\to Y$$ 
be the projective bundle of lines in the dual bundle $E^*$.
The projective bundle $\Proj(E)$ carries a tautological quotient bundle line bundle $O_{\mathbb{P(E)}}(1)$ whose dual $O_{\mathbb{P(E)}}(-1)$ is naturally a subbundle of $\pi^{*}E$:    
\begin{align*}
 0\longrightarrow O_{\Proj(E)}(-1)\longrightarrow \pi^{*}E.   
\end{align*}
In other words, a point $(y,[L])$ of $\Proj(E)$ is determined by a point $y\in Y$ together with $[L]$ in $\Proj^n= \Proj\bi(E^{*}(y)\bi)$. The fiber $O_{\Proj(E)}(-1)_{(y,[L])}$ is the subvector space $L$ and the fiber $O_{\Proj(E)}(1)_{(y,[L])}$ is the one dimensional quotient of $E$ corresponding to $L$. A vector bundle $E$ is called ample if $O_{\Proj(E)}(1)$ is ample.  

Let $X=\Gamma \backslash\mathbb{B}^n$ be a complex ball quotient by a torsion-free lattice $\Gamma \subset \operatorname{Aut}(\mathbb{B}^n)$ and $\hat{h}$ be the hermitian metric induced by the Bergman metric $h$ on $O_{\Proj(\Omega^1_{X})}(1)$.

In this section, we prove two properties of $\hat{h}$ in Proposition \ref{Form Inequlity} and Proposition \ref{Good metric}. These properties will be used to construct a singular hermitian metric on the line bundle $O(1)$ over $\Proj(\lgcot)$ to prove Theorem \ref{SlopLog}. 

Let  $\hat{h}^*$ be the dual metric of $\hat{h}$ on 
$O_{\Proj(\Omega^1_{X})}(-1)$. Suppose $U$ is an open subset of $\Proj(\Omega^{1}_{X})$ such that for every $x \in U$, $\xi_{1, x}\neq 0$.  After replacing $\xi_{i}$ by $\frac{\xi_{i}}{\xi_{1}}$ and $w$ by $\frac{w}{\xi_i}$, we can take a local section $\sigma=\dis\sum_{i=1}^{n-1}\xi_{i}e_{i}+we_{n}$ of $O_{\Proj(\Omega^1_X)}(-1)$ on $U$. The first Chern form of $O_{\Proj(\Omega^1_X)}(-1)$ on $U$ is represented by 
\begin{align*}
c_{1}(O_{\Proj(\Omega^1_X)}(-1),\hat{h}^*)
&= 
\frac{i}{2\pi}\delb\del \log||\sigma||_{\hat{h}^*}^{2} \\
&= 
\frac{i}{2\pi}\delb\del \log\B(\sum_{i=1}^{n}\sum_{j=1}^{n}\xi_{i}\bar{\xi}_{j}h(e_i,e_j)\B)\\
&=\frac{i}{2\pi}
(-2\delb\del \log u
+
\delb\del \log v),
\end{align*}
where \begin{align} \label{functionV}
v=\dis\sum_{i=1}^{n-1}\sum_{j=1}^{n-1}\bi( (u\delta_{ij}+\bar{\zeta_i} \zeta_j)\xi_i\bar{\xi}_j
+
\frac{1}{2i}\xi_j \bar{\xi}_n \bar{\zeta_{j}}
-
\frac{1}{2i} \bar{\xi}_j \xi_n \zeta_{j}
\bi) 
+
\frac{1}{4}|\xi_n|^2.\end{align}
As $O_{\Proj(\Omega^{1}_{X})}(-1)$ is the dual of $O_{\Proj(\Omega^{1}_{X})}(1)$, the first Chern form $c_{1}(O_{\Proj(\Omega^1_X)}(1),\hat{h})$ on $U$ is given by
\begin{align}\label{ChernForm}
c_{1}\bi(O_{\Proj(\Omega^1_X)}(1),\hat{h}\bi)
=
\frac{i}{2\pi}(2\delb\del \log u
-
i\delb\del \log v).   
\end{align}

\begin{Lemma} \label{Homogenious}
For every $q \in \Omega_{X}^{1}$ there exists a point $p$ in the orbit $I_{\infty} \cdot q$ such that in $(\zeta, z, \xi, w)$-coordinates $p$ is given by 
$$\zeta_{i,p}=\xi_{i,p}=0,$$
for $1\le i\le n-2$, $\zeta_{n-1,p}=0$ and $\xi_{n-1,p}$ being a positive real number.
\end{Lemma}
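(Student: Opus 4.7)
The plan is to exploit the semidirect structure $I_{\infty}=U(n-1)\ltimes \mathfrak{N}$ in two stages: first use a Heisenberg translation to bring $\zeta$ to zero, then use a Heisenberg rotation to put $\xi$ into the required canonical form. The key observation is that translations can freely shift the $\zeta$-coordinate (at the cost of shifting $\xi$), while rotations fix $\zeta=0$ and act $U(n-1)$-transitively on the $\xi$-coordinate, which gives exactly the degrees of freedom needed to reach the prescribed normal form.

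For step one, given $q=(\zeta_q, z_q, \xi_q, w_q)$, I would apply $(I,-\zeta_q,0)\in \mathfrak{N}$. By the action formula \eqref{ActonCot}, this moves the base point to $\zeta=0$, preserves $w_q$, and shifts $\xi_q$ by an additive correction determined by $w_q$ and $\bar\zeta_q$ (produced via the chain rule by the non-holomorphic $2i(A\zeta,\tau)$ term appearing in the transformation of $z$); denote the resulting $\xi$-value by $\xi'$. The $z$-coordinate is also altered, but no condition on $z_p$ is imposed by the lemma.

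For step two, I would then apply a Heisenberg rotation $(A,0,0)$ with $A\in U(n-1)$. This element fixes $\zeta=0$ and acts unitarily on $\xi'$. Since $U(n-1)$ is transitive on complex rays in $\C^{n-1}$ and, for any nonzero $v$, contains the rotation carrying $v$ to $|v|\, e_{n-1}$, one can pick $A$ so that the transformed $\xi$-coordinate equals $|\xi'|\, e_{n-1}$. The product of these two $I_\infty$-elements then takes $q$ to a point $p$ satisfying $\zeta_p=0$, $\xi_{i,p}=0$ for $1\le i\le n-2$, and $\xi_{n-1,p}=|\xi'|$, real and nonnegative.

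The one delicate point I anticipate is the need to check that $\xi'\neq 0$, which is required to conclude that $\xi_{n-1,p}$ is \emph{strictly} positive. The vanishing locus of $\xi'$ corresponds to an exceptional family of ``vertical'' covectors cut out by a specific affine relation among $(\xi_q, w_q, \zeta_q)$; this is a proper $I_\infty$-invariant subset of $\Omega^1_{\Sn}$, and handling it is the main (minor) obstacle. For the intended use in the Chern-form computation \eqref{ChernForm}, one can either restrict to generic $q$ outside this locus or appeal to a continuity/closure argument, since the quantities ultimately being compared depend continuously on $q$.
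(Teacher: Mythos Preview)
Your two-step argument---apply the Heisenberg translation $(I,-\zeta_q,0)$ to kill $\zeta$, then a unitary rotation $(B,0,0)$ to send the resulting $\xi'$ to $(0,\dots,0,|\xi'|)$---is exactly the paper's proof. Your caveat about the degenerate case $\xi'=0$ is a genuine refinement the paper glosses over; strictly speaking the conclusion should read ``nonnegative'' rather than ``positive,'' but in the sole application (the Chern-form computation in Proposition~\ref{Positive form} on the affine chart $w\neq 0$) all formulas remain valid when $\xi_{n-1,p}=0$, so the issue is harmless.
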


\begin{proof} Consider the action of $(A,\tau,t)\in \operatorname{U}(n-1)\times \C^{n-1} \times \R$ described in \ref{ActonCot} and take $(A,\tau,t)= (I,-\zeta_{q}, 0)$:
$$(I,-\zeta, 0).(\zeta_q,z_q,\xi_q, w_q)
=
(0,z',\xi_q +i|\zeta_q|^2 +2i(\xi_q, -\zeta_q), w_q)
$$

There exist $B \in \operatorname{U}(n-1)$ such that $$B \bi(\xi + 2i(\xi, -\zeta)\bi) = (0,...,0,|\xi_q + 2i(\xi_q, -\zeta_q)|).$$
The element $(B,0,0) \in U(n-1)\times \C^{n-1} \times \R$ sends $(0,z',\xi_q +i|\zeta_q|^2 +2i(\xi_q, -\zeta_q), w_q)$ to a point $p$ with the desired properties.
\end{proof}

\begin{Proposition}\label{Form Inequlity} Let $X=\Gamma \backslash \mathbb{B}^n$ be a torsion-free ball quotient and $\hat{h}$ be the hermitian metric on $O_{\Proj(\Omega_{X}^{1})}(1)$ induced by the Bergman metric on $\mathbb{B}^n$. Then, 
\label{Positive form}
\begin{enumerate}[label=(\roman*)]
    \item the first Chern form $c_{1}\bi(O_{\Proj(\Omega_{X}^{1})}(1),\hat{h}\bi)$ is a Kähler form on $\Proj(\Omega_{X}^{1})$; \label{Kähler form}
    \item $$ c_{1}\bi(O_{\Proj(\Omega_{X}^{1})}(1),\hat{h}\bi)  \ge \frac{1}{4\pi}\pi^{*}(w_{X}),$$ \label{Form comparsion}
     where $w_{X}$ is the Kähler form of the Bergman metric on $X$.
\end{enumerate}
\end{Proposition}
\begin{proof}
First we prove part (ii) and then conclude part(i) from the inequality appears in the proof of (ii).    

(ii) Since the Bergman metric and $u$ is $I_{\infty}$-invariant, its enough to check the inequality on $I_{\infty}$-orbit. Consider an open set $U\subset \Proj(\Omega^1_{X})$ such that $w\neq 0$. Replacing $\xi_i$ by $\frac{\xi_i}{w}$ and $w$ by $1$, we can work on the affine coordinates on $\Proj(\Omega^1_{X})_{|U}$. Thanks to Lemma \ref{Homogenious}, we can move every point in $U$ by an element of $I_{\infty}$ to point $p$ such that $$\zeta_{1,p}=\zeta_{2,p}=...=\zeta_{n-1, p}=0,$$ $$\xi_{1,p}=\xi_{2,p}=...=\xi_{n-2,p}=0,$$ $w=1$ and $\xi_{n-1,p}$ is a real number. Note that the function $v$ in  \ref{functionV} on $U$ is 

$$v= (\dis\sum_{i=1}^{n-1}\sum_{j=1}^{n-1}\bi( (u\delta_{ij}+\bar{\zeta_i} \zeta_j)\xi_i\bar{\xi}_j
+
\frac{1}{2i}\xi_j  \bar{\zeta_{j}}
-
\frac{1}{2i} \bar{\xi}_j \zeta_{j}
\bi) 
+
\frac{1}{4}.$$ 
Set $L=O_{\Proj(\Omega^1_{X})}(1).$ The first Chern form at $p$ is

$$c_{1}(L,\hat{h})(p)
=
\frac{i}{2\pi}\delb\del\log(v)(p)
=
\frac{i}{2\pi} (v(p))^{-2} \bi(v(p)\cdot \delb\del v(p)-(\delb v\wedge \del v)(p) \bi).$$
To find explicit formula note that 
\begin{align*}
   r &:=v(p)=u\xi_{n-1}^2+\frac{1}{4},\\
   \del v(p)&=u\xi_{n-1}d\xi_{n-1}+\frac{1}{2i}\xi_{n-1}^2dz-\frac{1}{2i}\xi_{n-1}d\zeta_{n-1},
\end{align*}
and 
\begin{align*}
    \delb\del v(p)
=& 
\dis\sum_{i=1}^{n-1}
\bi( 
-
\xi_{n-1}^2d\bar{\zeta}_{i} \wedge d\zeta_{i}
+
u d\bar{\xi}_{i}\wedge d\xi_{i}
+
\frac{1}{2i}d\bar{\zeta_{i}} \wedge d\xi_{i}
-
\frac{1}{2i}  d\bar{\xi}_{i} \wedge d\zeta_{i} 
\bi)\\
&+\xi_{n-1}^2d\bar{\zeta}_{n-1} \wedge d\zeta_{n-1}
-
\frac{1}{2i}\xi_{n-1}d\bar{z} \wedge d\xi_{n-1}
+
\frac{1}{2i}\xi_{n-1} d\bar{\xi}_{n-1} \wedge dz,
\end{align*}
giving that
\begin{align*}
c_{1}(L,\hat{h})(p)
=
\frac{i}{2\pi} r^{-2}\cdot \B (&
\dis\sum_{i=1}^{n-2}
\bi( 
r u d\bar{\xi}_{i}\wedge d\xi_{i}
-
r\xi_{n-1}^2d\bar{\zeta}_{i} \wedge d\zeta_{i}
+
\frac{1}{2i}r d\bar{\zeta_{i}} \wedge d\xi_{i}
-
\frac{1}{2i}r  d\bar{\xi}_{i} \wedge d\zeta_{i} 
\bi)\\
&+\frac{1}{4}\cdot \bi(
u d\bar{\xi}_{n-1}\wedge d\xi_{n-1}
+
\frac{1}{2i}d\bar{\zeta}_{n-1} \wedge d\xi_{n-1}
-
\frac{1}{2i}  d\bar{\xi}_{n-1} \wedge d\zeta_{n-1}\\
&-
\frac{1}{2i}\xi_{n-1}d\bar{z} \wedge d\xi_{n-1}
+
\frac{1}{2i}\xi_{n-1} d\bar{\xi}_{n-1} \wedge dz
-
\xi^4_{n-1}d\bar{z}\wedge dz \\
&+
\xi_{n-1}^3d\bar{z} \wedge d\zeta_{n-1} 
+
\xi_{n-1}^3d\bar{\zeta}_{n-1} \wedge dz
-
\xi_{n-1}^2d\bar{\zeta}_{n-1}\wedge d\zeta_{n-1}\bi) 
\B).
\end{align*}
Now, as $\omega_{X}=2i\delb \del \log(u)$, using equation \ref{ChernForm}, we can write 
\begin{align*}
c_{1}(L,\hat{h})(p)-\frac{1}{4\pi}\pi^{*}(w_{X})(p)
=
\frac{i}{2\pi}\bi(\delb\del \log (u) (p)
-
\delb\del\log(v)(p)\bi),
\end{align*}
and therefore denoting the form $c_{1}(L,\hat{h})-\frac{1}{4\pi}\pi^{*}(w_{X})$ by $\eta$, we obtain 
\begin{align*}
\eta(p)
=
&\frac{i}{2\pi}\cdot \dis\sum_{i=1}^{n-2}
\bi( u r ^{-1} d\xi_{i} \wedge d\bar{\xi_i}
+
(u^{-1} -r ^{-1}\xi_{n-1}^2)d\zeta_{i} \wedge  d\bar{\zeta}_{i}
+
\frac{1}{2i}r^{-1} d\xi_{i} \wedge  d\bar{\zeta_{i}}
-
\frac{1}{2i} r ^{-1}  d\zeta_{i} \wedge  d\bar{\xi}_{i}
\bi )\\
&+
\frac{i}{8} r ^{-2}u^{-2}
\B (
(8ur^{2}-2u^2\xi_{n-1}^2) d\zeta_{n-1} \wedge d\bar{\zeta}_{n-1}
+
2u^3d\xi_{n-1} \wedge d\bar{\xi}_{n-1}
-
iu^2  d\xi_{n-1} \wedge d\bar{\zeta}_{n-1}\\
& \hspace{2.8cm}+
iu^2   d\zeta_{n-1} \wedge d\bar{\xi}_{n-1}
+
iu^2\xi_{n-1} d\xi_{n-1} \wedge  d\bar{z}
-
iu^2\xi_{n-1}   dz \wedge d\bar{\xi}_{n-1}\\
& \hspace{2.8cm}+
(2r^2
-
2u^2\xi^4_{n-1}) dz \wedge  d\bar{z}
+
2u^2 \xi_{n-1}^3  d\zeta_{n-1} \wedge  d\bar{z}
+
2u^2\xi_{n-1}^3  dz \wedge d\bar{\zeta}_{n-1}
\B)
\end{align*}
To see that $\eta(p)$ is semi-positive, we choose the local frame $\beta$ for $L$ over $U$ as follows:  
$$\beta_{2i-1}=\frac{\del}{\del \xi_{i}},\ \beta_{2i}=\frac{\del}{\del \zeta_{i}},$$
for $1\le i\le {n-1}$ and $\beta_{2n-1}=\frac{\del}{\del z}.$ 
In this frame, 
$$[\eta(p)]_{\beta}= \frac{i}{2\pi}
\begin{bmatrix}
A_1& 0 & ...&0&0\\
0&A_2& ... & 0&0\\
...&...& ... & ... &...\\
0 & 0&...&A_{n-1}&0\\
0 & 0&...&0&B
\end{bmatrix},$$
where $A_{i}=r^{-1}
\begin{bmatrix}
u&\frac{1}{2i}\\
\\
-\frac{1}{2i}&\frac{1}{4u}
\end{bmatrix}$
and 
$B=
\frac{1}{8}r^{-2}u^{-2}\begin{bmatrix}
2u^3 & i u^2 & -i u ^2\xi_{n-1}\\
\\
-i u ^ 2 & 8u r ^{2}-2u^2\xi_{n-1}^2 & 2u^2\xi^3\\
\\
i u ^2\xi_{n-1} & 2u^2\xi^3 & 2 r ^2
-
2 u ^ 2 \xi ^ 4 _{n-1}

\end{bmatrix}.$

By computation we can see that $\operatorname{det}(A_{i})=0$, $\operatorname{tr}(A_{i})>0$ and determinate of all upper left sub-matrices of $B$ are semi-positive. Hence, 
$$c_{1}\bi(L,\hat{h}\bi)\ge\frac{1}{4\pi}w_{X}.$$ 

(i) Since $c_{1}\bi(L,\hat{h}\bi)$ is a closed form, we only need to show that it is positive. As $\omega_{X}$ is zero only on the vertical directions and neither $A_i$s nor $B$ is zero on the vertical directions, $c_{1}(L, \hat{h})$ is a positive $(1,1)$-form.

\end{proof}

In fact, if $\Gamma \subset \operatorname{PU}(n,1)$ is a cocompact lattice, Proposition \ref{Positive form} implies that $\Gamma \backslash\mathbb{B}^n$ has ample cotangent bundle. This is well-know and the difficulty is, when $\Gamma$ is not cocompact.

For the non-compact case, we will construct a hermitian metric on $O(1)$ over  $\Proj(\Omega^{1}_{X})$ which extends as singular hermitian metric to $O(1)$ over $\Proj(\lgcot).$

To prove Theorem \ref{SlopLog}, we will construct a singular hermitian metric whose curvature current is represented by a form. To this end, we prove the following Proposition which is inspired by \cite[Lemma 6.18]{griffiths1973nevanlinna} and \cite[Proposition 5.16]{kollar1985subadditivity}:

\begin{Proposition}\label{Current}
Suppose $\Phi(\zeta, |q|):\C^{n-1}\times \C\to (0,\infty]$ is a function satisfying the following conditions:
\begin{enumerate} [label=(\roman*)]
    \item $\del \log\bi(\Phi(\zeta, |q|)\bi)$ and $\del\delb \log\bi(\Phi(\zeta, |q|)\bi)$ are locally integrable on a neighborhood of the divisor $q=0$;
    
    \item $\dis \lim_{q\to 0}\frac{\log(\Phi(\zeta, |q|))}{\log|q|}=0,$ when $|\zeta|$ is bounded. 
\end{enumerate}

Then, the current $\Big[\del\delb \log\bi(\Phi(\zeta, |q|)\bi)\Big]$ is represented by the form $\del\delb \log\bi(\Phi(\zeta, |q|)\bi).$  

\end{Proposition}


\begin{proof}

Let $U$ be an open set where the divisor $D$ is given by $q=0$. For a compactly supported $(n-1,n-1)-$form  $f$ on $U$,

$$
\dis\int_{X}\log\bi(\Phi(\zeta, |q|)\bi)\del\delb f
=
\dis\lim_{\epsilon\to 0}\dis\int_{X_{\epsilon}}\log\bi(\Phi(\zeta, |q|)\bi)\del\delb f,
$$
where $X_\epsilon=\{(\zeta,q) \in \C \ | \ |q|>\epsilon\}$.
Applying Stokes' theorem to the closed form $d\bi(\log|\Phi(\zeta, |q|)\del f\bi)$, we get:

\begin{align} \label{Stokes1}
\dis\int_{X_{\epsilon}}\log(\Phi(\zeta, |q|))\delb\del f
=
-
\dis\int_{X_{\epsilon}} \delb\log\bi(\Phi(\zeta, |q|)\bi)\wedge \del f
+
\dis\int_{S_{\epsilon}}\log\bi(\Phi(\zeta, |q|)\bi)\del f,
\end{align}
where $S_\epsilon= \{(\zeta,q) \in \C^n \ | \ |q|=\epsilon\}$ is oriented with its normal in the direction of decreasing $|q|$.
Since $S_\epsilon= C_\epsilon \times \C^{n-1}$ where $C_\epsilon$ is the circle $\{ q\in \C \ | \ |q|=\epsilon\}$,  Fubini's theorem gives that 
\begin{align}
\dis\int_{S_{\epsilon}}\log\bi(\Phi(\zeta, |q|)\bi)\del f
=
&\dis\int_{\C^{n-1}}\B(\int_{C_{\epsilon}}\log(\Phi\bi(\zeta, |q|\bi)\tildeb{f}(\zeta, q)dq\B) \ d\zeta \wedge d\bar{\zeta} \nonumber \\ \label{FirstZero}
= 
&\dis\int_{D}\B( \log\bi(\Phi(\zeta,\epsilon)\bi) \int_{C_{\epsilon}}\tildeb{f}  (\zeta, q)dq\B) \ d\zeta \wedge d\bar{\zeta},
\end{align}
where $d\zeta\wedge d\bar{\zeta} = d\zeta_1\wedge d\bar{\zeta_1} \wedge ... d\zeta_{n-1}\wedge d\bar{\zeta}_{n-1}$,  $ \tildeb{f}(\zeta,q)dq\wedge d\zeta \wedge d\bar{\zeta} $ is $dq\wedge d\zeta \wedge d\bar{\zeta}$ part of $\del f$ and $D$ is a compact set such that $D \times C_\epsilon$ contains the support of $\tildeb{f}(\zeta,q)$.
Since $\dis \lim_{q\to 0}\frac{\log(\Phi(\zeta, |q|))}{\log|q|}=0$, there exists $\epsilon'> 0$ such that $|\log\bi(\Phi(\zeta, \epsilon)\bi)|<\epsilon' |\log(\epsilon)|$. 
Furthermore, as $\tildeb{f}(\zeta,q)$ is continuous and $C_\epsilon$ is compact, there exists a continuous function $M(\zeta)$ such that
$\dis\left|\int_{C_{\epsilon}}\tildeb{f}  (\zeta, q)dq \ \right|< M(\zeta)\cdot \epsilon$. It follows by compactness of $D$ that
\begin{align} \label{FirstEstimation}
    \dis \left| \int_{D}\B( \log\bi(\Phi(\zeta, \epsilon)\bi) \int_{C_{\epsilon}}\tildeb{f}(\zeta, q)dq\B) \ d\zeta \wedge d\bar{\zeta} \right|
\le R \cdot \epsilon \log(\epsilon),
\end{align}
where $R$ is a constant. As such, \ref{FirstZero} and \ref{FirstEstimation} imply that $\dis\lim_{\epsilon \to 0 }\dis\int_{S_{\epsilon}}\log(\Phi(\zeta, |q|))\del f
=0$ and by \ref{Stokes1} we obtain that
\begin{align} \label{Cleaned First Term}
  \int_{X}\log\bi(\phi(\zeta, |q|)\bi)\del\delb f
=
\int_{X} \delb\log\bi(\phi(\zeta, |q|)\bi)\wedge \del f.
\end{align}

On the other hand, applying Stokes' theorem to the closed form $d\B(f \wedge \delb\log\bi(\phi(\zeta, |q|)\bi)\B)$ yields that:

\begin{align} \label{Second Term}
\dis\int_{X_\epsilon}f\wedge \del\delb\log\bi(\Phi(\zeta, |q|)\bi)
=
-
\dis\int_{X_{\epsilon}} \del f \wedge \delb\log\bi(\Phi(\zeta, |q|)\bi)
+
\dis\int_{S_{\epsilon}} f\wedge \delb\log\bi(\Phi(\zeta, |q|)\bi).
\end{align}

It will be shown that $\dis\lim_{\epsilon\to 0}\int_{S_{\epsilon}} f\wedge \delb\log\bi(\Phi(\zeta, |q|)\bi)=0$. Note that

\begin{align} \label{SecondVanishing}
    \dis\int_{S_{\epsilon}} f\wedge \delb\log\bi(\Phi(\zeta, |q|)\bi)
    &= 
     \int_{r=\epsilon} f \wedge
     \frac{1}{\Phi(\zeta,r)} \cdot\frac{\del \Phi(\zeta, r)}{\del \bar{q}}
     +
     \int_{|q|=\epsilon} f
    \wedge
    \sum_{i=1}^{n-1} \frac{\del \log\bi(\Phi(\zeta, |q|)\bi)}{\del \bar{\zeta_i}},
\end{align}
where $r=|q|$. Applying L'Hopital's rule to $\dis\lim_{r\to 0}\dfrac{\log(\Phi(\zeta,r))}{log(r)}=0$  yields 
\begin{align} \label{Hopital}
\lim_{r\to 0}\frac{r}{\Phi(\zeta,r)}\cdot \frac{\del \Phi(\zeta,r)}{\del r}=0.    
\end{align}

As $\frac{\del r}{\del \bar{q}}= \frac{1}{2}\B(\frac{q}{\bar{q}}\B)^\frac{1}{2}$
the chain rule gives
$$\dis \frac{\del\Phi(\zeta,|q|) }{\del \bar{q}}
=
\frac{1}{2}\B(\frac{q}{\bar{q}}\B)^\frac{1}{2}
\cdot
\frac{\del \Phi(\zeta,r)}{\del r}.
$$
Since $f$ is compactly supported, there exists a constant $M'$ such that

\begin{align} \label{Lelong}
    \dis\left |\int_{r=\epsilon} f \wedge
     \frac{1}{\Phi(\zeta,r)} \cdot\frac{\del \Phi(\zeta, |q|)}{\del \bar{q}} \right|
     \le M'\epsilon \cdot \max_{r=\epsilon}{\left| \frac{1}{\Phi(\zeta,r)}\cdot \frac{\del \Phi(\zeta,r)}{\del r} \right|},
\end{align}
which tends to $0$ as $\epsilon$ tends to 0 using \ref{Hopital}. 
The other term in \ref{SecondVanishing} can be written as

\begin{align}
 \int_{|q|=\epsilon} f
    \wedge
    \sum_{i=1}^{n-1} \frac{\del \log\bi(\Phi(\zeta, |q|)\bi)}{\del \bar{\zeta_i}}
    &= \int_{\C^n}\sum_{i=1}^{n-1} \frac{\del \log\bi(\Phi(\zeta, \epsilon)\bi)}{\del \bar{\zeta_i}}\int_{|q|=\epsilon}f \nonumber \\ \label{StokesOfZeta}
    &= \int_{\C^n} \log\bi(\Phi(\zeta,\epsilon)\bi) \ \delb\B 
    (\int_{|q|=\epsilon}f\B),
\end{align}
by Stokes' theorem. As  $f$ is compactly supported there exists a constant $R'$ such that

\begin{align} \label{Zeta}
\left|\int_{\C^n} \log\bi(\Phi(\zeta,\epsilon)\bi) \ \delb\B
    (\int_{|q|=\epsilon}f\B)\right|
\le
R' \epsilon' |\log(\epsilon)| \epsilon 
\end{align}
tending to $0$ as $\epsilon$ tends to $0$. Therefore, combining \ref{SecondVanishing}, \ref{Lelong}, \ref{StokesOfZeta} and \ref{Zeta} gives 
$\dis \lim_{\epsilon \to 0}\int_{S_{\epsilon}} f\wedge \delb\log\bi(\Phi(\zeta, |q|)\bi)=0$ and by \ref{Second Term} we get 
\begin{align} \label{Cleaned Second Term}
\dis\int_{X}f\wedge \del\delb\log\bi(\Phi(\zeta, |q|)\bi)
=
-
\dis\int_{X} \del f \wedge \delb\log\bi(\Phi(\zeta, |q|)\bi).
\end{align}

The claim follows from \ref{Cleaned First Term} and \ref{Cleaned Second Term}.
\end{proof}

\begin{Proposition} \label{Good metric}
Let $s$ be a local section of $O_{\Proj(\Omega^1_{X}(\log D))}(1).$ Then, the current $[\del\delb \log(\hat{h}(s))]$ is represented by the form $\del \delb \log(\hat{h}(s)).$

\end{Proposition}

\begin{proof} Let $D'$ be the pull back of $D=\Xb\backslash X$ to $\Omega^1_{\Xb}(\log D)$. Take an arbitrary point $a =(\zeta_{1},..., \zeta_{n-1},q=0;[\xi_{1},...,\xi_{n}])\in D'$. At least one of $\xi_1,\xi_2,...,\xi_n$ is non-zero.
Let $\xi_{k} \neq 0,$ for some $k\in \{1,2,...,n\}.$ Suppose $U$ is an open subset of $\Proj(\Omega^1_{X}(\log D))$ such that for every $u \in U$, $\xi_{k, u}\neq 0$ and $D' \cap U =(q=0)$. Replacing $\xi_{i}$ by $\frac{\xi_{i}}{\xi_{k}}$, 
$\eta=\dis\sum_{i=1}^{n-1}\xi_i\frac{\del}{\del \zeta_{i}}+\xi_{n}q\frac{\del}{\del q}$ is a local section of $O_{\Proj(\Omega^1_{X}(\log D))}(-1)$ on $U$. Note that on $U$ we have $h(s)=\dfrac{1}{||\eta||^2_{\hat{h}^*}},$ and therefore it is enough to check the conditions of Proposition \ref{Current} for $\log(||\eta||^2_{\hat{h}^*}).$  We write this function in terms of the chosen coordinate. First, we have that 

\begin{align*} 
||\eta||^2_{\hat{h}^*}&= 
u^{-2}.\B(\sum_{i=1}^{n-1}\sum_{j=1}^{n-1} \bi( (u\delta_{ij}+\bar{\zeta_i} \zeta_j)\xi_i\bar{\xi}_j
+
\frac{t_{\infty}}{4\pi}\xi_j \bar{\xi}_n \bar{\zeta_{j}}
+
\frac{t_{\infty}}{4\pi} \bar{\xi}_j \xi_n \zeta_{j}
\bi) 
+
\frac{t^2_{\infty}}{16\pi}|\xi_n|^2
\B)\\
&= u^{-1}\cdot \sum_{i=1}^{n-1}|\xi_i|^2+u^{-2}\cdot \B(\sum_{i=1}^{n-1}\sum_{j=1}^{n-1}\bi( \bar{\zeta_i} \zeta_j\xi_i\bar{\xi}_j
+
\frac{t_{\infty}}{4\pi}\xi_j \bar{\xi}_n \bar{\zeta_{j}}
+
\frac{t_{\infty}}{4\pi} \bar{\xi}_j \xi_n \zeta_{j}
\bi) 
+
\frac{t^2_{\infty}}{16\pi}|\xi_n|^2
\B). 
\end{align*} 
Therefore, 
\begin{align}\label{logsectionnorm}
\log(||\eta||^2_{\hat{h}^*})= -2\log(u)+ \log(\dis\sum_{i=1}^{n-1}\sum_{j=1}^{n-1}\bi( (u\delta_{ij}+\bar{\zeta_i} \zeta_j)\xi_i\bar{\xi}_j
+
\frac{t_{\infty}}{4\pi}\xi_j \bar{\xi}_n \bar{\zeta_{j}}
+
\frac{t_{\infty}}{4\pi} \bar{\xi}_j \xi_n \zeta_{j}
\bi) 
+
\frac{t^2_{\infty}}{16\pi}|\xi_n|^2).
\end{align}

To check that the $1$-form $\del \log(||\eta||^2_{\hat{h}^*})$ is locally integrable, we only need to check that the the function $\dfrac{1}{|q|\log|q|}$ is locally integrable around $q=0.$ This follows from $$\dis\int_{|q|<\epsilon}\dfrac{dq\wedge d\bar{q}}{|q|\log|q|}=\int_{0}^{2\pi}\int_{0}^{\epsilon}\dfrac{2 r^{\frac{1}{2}}dr d\theta}{\log(r)}<\infty,$$ 
for small enough $\epsilon>0.$






To check that the $(1,1)$-form $\delb \del\log(||\eta||^{2}_{\hat{h}^*})$ 
is locally integrable around $q=0,$ we should only check that the function $\dfrac{1}{|q|^2\log^2|q|^2}$ is locally integrable around $q=0.$ This follows from 
$$\dis\int_{|q|<\epsilon}\dfrac{dq\wedge d\bar{q}}{|q|^2\log^2|q|}=\int_{0}^{2\pi}\int_{0}^{\epsilon}\dfrac{ dr d\theta}{r\log^2(r)}<\infty,$$ 
for small enough $\epsilon>0.$ When $\zeta_1,...,\zeta_n$ and $\xi_1,...,\xi_n$ are bounded and $q\to 0,$ using \eqref{logsectionnorm} we have the the asymptotic relation  
$$
\dfrac{\log(||\eta||^{2}_{\hat{h}^*})}{\log|q|}
\sim\dfrac{c\log(u)}{\log|q|}
\sim\dfrac{c\log(-\log|q|)}{\log|q|}
\sim 0,
$$
 where $c\in \R.$ Hence, we can apply Proposition \ref{Current} to $\log(||\eta||^2_{\hat{h}^*})$. 


\end{proof}
\section{$\Q$-vector bundles and Base loci}\label{Sec4}

The goal of this section is to prove formal properties of $\Q$-vector bundles which will be used in \textsection \ref{sec3} and \textsection \ref{sec5} to study the twists of $\Cot$ and $\lgcot.$ To this end, we use the definitions and properties of the base loci of a vector bundle which has been systematically studied in \cite{bauer2015positivity}. We also use the definitions and properties of $\Q$-vector bundles from \cite[section 6]{Lazarsfeld2}.     

Let $E$ be a vector bundle over a projective variety $\Yb,$ and $D$ be an integral divisor on $\Yb.$ The base locus of $E$ is defined to be the subset
$$
\textbf{Bs}(E)=\{y\in \Yb|H^0(\Yb,E)\rightarrow E(y)\mbox{ is not surjective}\},
$$
and the stable base loci of $E$ is defined to be the algebraic subset 

$$\dis \mathbb{B}(E)=\bigcap_{m\ge 0} \textbf{Bs}(S^{m}E).$$
Let $r$ be a rational number. 
\begin{definition} A $\Q$-vector bundle $E \la rD \ra$ on $\Yb$ is a pair consisting of a vector bundle $E$ on $\Yb$ and a $\Q$-divisor $rD\in \mathrm{Div}_{\Q}(\Yb).$ A $\Q$-isomorphism of $\Q$-vector bundles is the equivalence relation generated by considering $E \la D'+ rD \ra$ to be equivalent to 
$E \otimes O_{\Yb}(D') \la rD \ra,$
where $D'$ is an integral divisor.     

\end{definition}
The  formal symbol $E \la rD \ra$ is intended to say that we are twisting the vector bundle $E$ by a $\Q$-divisor $rD.$ The symmetric power $S ^ m \bi(E \la r D \ra \bi)$ denotes the  $\Q$-vector bundle $S^m (E) \la m r D \ra.$ It is easy to observe that every $\Q$-vector bundle has a symmetric power which is $\Q$-isomorphic to a vector bundle.
Let $a$ and $b$ be integers such that $b$ is positive. We will define the base loci of the $\Q$-vector bundle $E \la  \frac{a}{b} D \ra$ to be the following set 

$$\dis \mathbb{B}(E \la \frac{a}{b} D \ra)=\bigcap_{m>0} \textbf{Bs}\bi(S^{ m b}E\otimes O_{\Yb}( m a D) \bi ),$$
which does not depend on the choice of $a,b.$ Let $A$ be an ample divisor on $\Yb$. The augmented base locus of $E$ is the algebraic subset

$$\mathbb{B}_{+}( E )=\bigcap_{r \in \Q_{+} }\mathbb{B} (E \la - r A \ra ),$$
which does not depend on the choice of the ample divisor $A.$

\begin{definition} \label{PosTwist}
We say that $E \la r D\ra$ is ample (nef) if one of the following equivalent properties holds:
\begin{enumerate} [label=(\roman*)]
    \item The $\Q$-divisor $O_{\Proj(E)}(1)+\pi^*(r D)$ is ample (nef) on $\Proj(E).$
    \item $E \la r D\ra$ has a symmetric power which is $\Q$-isomorphic to an ample (a nef) vector bundle.
    \item If $r=a/b$ and $b$ is positive, then the vector bundle $S^b E \otimes O_{\Yb}( a D )$ is ample (nef). 
\end{enumerate}
\end{definition}
The equivalency of these definitions is checked in \cite[Lemma 6.2.8]{lazarsfeld1}. 
\begin{Lemma}\label{ampletwist} If $E$ is a nef vector bundle  and $rD$ is an ample $\Q$-divisor, then $E \la  r D \ra$ is an ample $\Q$-vector bundle.   
\end{Lemma}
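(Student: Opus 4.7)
My plan is to invoke Definition \ref{PosTwist}(i) and verify that the $\Q$-divisor $O_{\Proj(E)}(1) + \pi^{*}(rD)$ is ample on $\Proj(E)$, where $\pi\colon\Proj(E)\to \Yb$ denotes the structure map. Write $H := O_{\Proj(E)}(1)$. Since $E$ is nef, $H$ is a nef line bundle, and $H$ is tautologically $\pi$-ample as the tautological quotient line bundle of a projective bundle. The pullback $\pi^{*}(rD)$ is nef on $\Proj(E)$ as the pullback of an ample $\Q$-divisor, so $H + \pi^{*}(rD)$ is certainly nef; the real content is to upgrade this to ampleness.

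I would apply the Nakai--Moishezon criterion for $\Q$-divisors: it suffices to check that
$$\bigl(H + \pi^{*}(rD)\bigr)^{k}\cdot V \;>\; 0$$
for every irreducible subvariety $V\subset\Proj(E)$ of dimension $k\geq 1$. Expanding the power binomially gives a sum of terms $\binom{k}{i} H^{k-i}\cdot \bigl(\pi^{*}(rD)\bigr)^{i}\cdot V$, each of which is non-negative because both $H$ and $\pi^{*}(rD)$ are nef. Hence it is enough to exhibit a single strictly positive contribution.

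I would split into cases on $e := \dim \pi(V)$. If $e = 0$, then $V$ is contained in a fibre of $\pi$ and $H|_{V}$ is ample by the $\pi$-ampleness of $H$, so $H^{k}\cdot V > 0$. If $e \geq 1$, I would focus on the term with $i = e$. Applying the projection formula to $\pi|_{V}\colon V \to \pi(V)$ rewrites this term as
$$H^{k-e}\cdot \bigl(\pi^{*}(rD)\bigr)^{e}\cdot V \;=\; \bigl(H^{k-e}\cdot F\bigr)\,\bigl((rD)^{e}\cdot \pi(V)\bigr),$$
where $F$ is a generic fibre of $V \to \pi(V)$. The factor $H^{k-e}\cdot F$ is positive because $F$ sits inside a fibre of $\pi$ on which $H$ restricts to $O(1)$ on a projective space (and is therefore ample), while $(rD)^{e}\cdot \pi(V) > 0$ by ampleness of $rD$ on $\Yb$. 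This yields strict positivity and completes the plan.

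There is no serious obstacle: the reduction from $\Q$-divisors to integral classes is immediate after clearing denominators, and the only step requiring a little care is the projection-formula bookkeeping in the case $e\geq 1$, which hinges on the relative ampleness of $O_{\Proj(E)}(1)$ that is built into the setup.
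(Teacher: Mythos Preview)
Your argument is correct. You use Definition~\ref{PosTwist}(i) and verify ampleness of $O_{\Proj(E)}(1)+\pi^{*}(rD)$ directly via Nakai--Moishezon, splitting on the dimension of $\pi(V)$ and using the relative ampleness of $O_{\Proj(E)}(1)$ together with the projection formula; the bookkeeping is fine.

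The paper takes a shorter route via Definition~\ref{PosTwist}(iii): writing $r=a/b$ with $b>0$, it observes that $aD$ is an ample integral divisor and $S^{b}E$ is nef, and then invokes the standard fact that a nef vector bundle tensored with an ample line bundle is ample (as in \cite[6.2.12]{Lazarsfeld2}) to conclude that $S^{b}E\otimes O_{\Yb}(aD)$ is ample. Your Nakai--Moishezon argument is essentially an unpacking of that black-box result in this specific setting, so it is more self-contained but longer; the paper's version trades the explicit intersection-theoretic computation for a one-line citation.
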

\begin{proof} Take $a,b \in \Z$ such that $r=a/b$ and $b$ is positive. As $rD$ is ample, the integral divisor $aD$ is ample.    
Since $E$ is nef, $S^{b} E$ is nef. Combining these two we get that $S^b E \otimes O_{\Yb}(a D )$ is an ample vector bundle. Hence, $E \la rD \ra$ is ample.  
\end{proof}
\begin{Lemma} \label{NefSequence}
Suppose $0\to G \to E \to F \to 0$ is an exact sequence of vector bundles on a projective variety $\Yb$. The following hold: 
\begin{enumerate} [label=(\roman*)] 
    \item If $E \la r D \ra$ is ample (nef), then $F \la r D \ra $ is ample (nef).
    \item If $G \la r D \ra$ and $F \la r D \ra$ are ample, then $E \la rD \ra$ is ample.
\end{enumerate}

\end{Lemma}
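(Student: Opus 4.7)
The plan is to deduce both parts from the integer-coefficient characterization of ampleness/nefness of $\Q$-vector bundles given in Definition~\ref{PosTwist}(iii). Throughout, write $r = a/b$ with $b > 0$, so that $E\la rD\ra$ (resp.\ $F\la rD\ra$, $G\la rD\ra$) is ample/nef if and only if the integral vector bundle $S^bE \otimes O_{\Yb}(aD)$ (resp.\ $S^bF \otimes O_{\Yb}(aD)$, $S^bG \otimes O_{\Yb}(aD)$) is ample/nef.

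For part (i), the strategy is simply that quotients of ample (resp.\ nef) vector bundles are ample (resp.\ nef). Since the symmetric-power functor is right-exact, the surjection $E \twoheadrightarrow F$ induces a surjection $S^bE \twoheadrightarrow S^bF$; tensoring by the line bundle $O_{\Yb}(aD)$ preserves surjectivity, giving $S^bE \otimes O_{\Yb}(aD) \twoheadrightarrow S^bF \otimes O_{\Yb}(aD)$. The left side is ample (resp.\ nef) by hypothesis, hence so is the right side, and Definition~\ref{PosTwist}(iii) then yields that $F\la rD\ra$ is ample (resp.\ nef).

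For part (ii), the plan is to combine the classical theorem that an extension of ample vector bundles is ample (Hartshorne--Lazarsfeld, \cite{lazarsfeld1}) with the natural filtration of $S^bE$ induced by the short exact sequence. Specifically, there is a filtration
\[
0 \subset W^b \subset W^{b-1} \subset \cdots \subset W^0 = S^bE,
\]
with $W^b = S^bG$, $W^0/W^1 \cong S^bF$, and intermediate successive quotients $W^i/W^{i+1} \cong S^iG \otimes S^{b-i}F$. Twisting by $O_{\Yb}(aD)$, the graded pieces become $S^iG \otimes S^{b-i}F \otimes O_{\Yb}(aD)$; the extremes $i=0,b$ are precisely the ample bundles assumed in the hypothesis. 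For an intermediate $0 < i < b$, decompose $aD = \tfrac{ia}{b}D + \tfrac{(b-i)a}{b}D$ as $\Q$-divisors to obtain the $\Q$-isomorphism
\[
S^iG \otimes S^{b-i}F \otimes O_{\Yb}(aD) \;\cong_{\Q}\; S^i\bi(G\la rD\ra\bi) \otimes S^{b-i}\bi(F\la rD\ra\bi).
\]
The right-hand side is a tensor product of ample $\Q$-vector bundles, hence an ample $\Q$-vector bundle; since the left-hand side is an honest integral vector bundle, this gives its ampleness in the ordinary sense. Iterative application of the Hartshorne--Lazarsfeld extension theorem along the filtration (starting from $W^b$ and climbing to $W^0$) then gives that $S^bE \otimes O_{\Yb}(aD)$ is ample, so $E\la rD\ra$ is ample.

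The main technical point in this plan is the $\Q$-analog of Lazarsfeld's tensor-product theorem, namely that the tensor product of two ample $\Q$-vector bundles is an ample $\Q$-vector bundle. This will follow, after passage to a common denominator, from the classical result \cite{lazarsfeld1} on tensor products of ample vector bundles together with the characteristic-zero Schur decomposition $S^N(V_1 \otimes V_2) = \bigoplus_{|\lambda|=N} \mathbb{S}^\lambda V_1 \otimes \mathbb{S}^\lambda V_2$; alternatively, one can argue directly on $\Proj(V_1 \otimes V_2)$ via the Segre embedding. This bookkeeping is the only non-immediate ingredient.
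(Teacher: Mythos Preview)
Your argument is correct. For part (i) you use characterization (iii) of Definition~\ref{PosTwist} (symmetric powers plus the fact that quotients of ample/nef bundles are ample/nef), whereas the paper uses characterization (i) directly: the surjection $E\twoheadrightarrow F$ gives a closed immersion $\Proj(F)\hookrightarrow\Proj(E)$ under which $O_{\Proj(E)}(1)+\pi^*(rD)$ restricts to $O_{\Proj(F)}(1)+\pi^*(rD)$, so ampleness/nefness is inherited. These are equivalent one-line reductions.

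For part (ii) the paper simply cites \cite[Lemma 6.2.8]{Lazarsfeld2}, where the extension statement is proved directly for $\Q$-twisted bundles. Your route via the filtration of $S^bE$ with graded pieces $S^iG\otimes S^{b-i}F\otimes O_{\Yb}(aD)$ is a valid re-derivation of that result, and you correctly isolate the only nontrivial input: that a tensor product of ample $\Q$-twisted bundles is ample. Your Schur-decomposition sketch works, but the cleanest justification (and the one Lazarsfeld uses) is a Bloch--Gieseker-type finite cover clearing denominators, which reduces to the classical tensor-product theorem for ordinary ample bundles and then descends. Either way, your proof is more self-contained at the cost of reproving part of the reference the paper invokes.
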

\begin{proof}
(i) Since $\Proj (E)$ parametrizes
one-dimensional quotients, the surjection $E \to F$ corresponds to an inclusion $\Proj(E)\subset \Proj(F)$, such that the restriction of $O_{\Proj(E)}(1)$ is $O_{\Proj(F)}(1)$. Hence, the ampleness (nefness) of $O_{\Proj(E)}(1)+\pi^*(rD)$ implies the ampleness (nefness) of $O_{\Proj(F)}(1)+\pi^*(rD)$. 

(ii) See \cite[Lemma 6.2.8]{Lazarsfeld2}.
\end{proof}
Note that the above mentioned definitions for a $\Q$-vector bundle $E \la r D \ra$ agree with the usual definitions for a vector bundle when a vector bundle $E$ is considered as the $\Q$-vector bundle $E \la 0 \ra.$

\begin{definition} \label{ModDDef}
A vector bundle $E$ over $\Yb$ is said to be ample modulo $D$ if for every coherent sheaf $\mathscr{F}$ over $\Yb$, there exists an $m_{0}>0$ such that if $m>m_{0}$, then for every $y\in \Yb\setminus{D}$, the fiber $ \mathscr{F} \otimes S^{m}(E)_{|y}$ is generated by $H^{0}\bi(Y, \mathscr{F} \otimes S^{m}(E)\bi)$.
\end{definition}

 There is an obvious relation between the notion of ampleness modulo a divisor and the augmented base loci:  

\begin{Proposition} \label{Aug} If the vector bundle $E$ is ample modulo $D$, then the augmented base loci $\mathbb{B}_{+}(E)$ contained in $D$ and $\mathbb{B}_{+}\bi(O_{\Proj(E)}(1)\bi)$ is contained in $\pi^{*}(D).$ 
\end{Proposition}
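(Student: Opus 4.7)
The plan is to unpack the definition of the augmented base locus and, for each point lying outside $D$, exhibit a single rational $r > 0$ certifying its absence from the corresponding stable base locus. Recall $\mathbb{B}_{+}(E) = \bigcap_{r \in \mathbb{Q}_{+}} \mathbb{B}(E \la -rA \ra)$ for any fixed ample divisor $A$, so it is enough to produce one such $r$. For the first inclusion, I would fix an ample integral divisor $A$ on $\Yb$ and any point $y \notin D$. Applying the ample-modulo-$D$ hypothesis (Definition \ref{ModDDef}) to the coherent sheaf $\mathscr{F} = O_{\Yb}(-A)$ yields an $m_{0}$ such that for every $m > m_{0}$ the sheaf $S^{m}(E) \otimes O(-A)$ is generated by global sections on $\Yb \setminus D$, and in particular at $y$. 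Since, by the definition of the base locus of a $\mathbb{Q}$-vector bundle,
$$\mathbb{B}\bigl(E\la -A/m\ra\bigr) \;=\; \bigcap_{k > 0} \textbf{Bs}\bigl(S^{km}(E) \otimes O(-kA)\bigr) \;\subseteq\; \textbf{Bs}\bigl(S^{m}(E) \otimes O(-A)\bigr),$$
this shows $y \notin \mathbb{B}(E\la -A/m\ra)$, and hence $y \notin \mathbb{B}_{+}(E)$.

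For the second inclusion the argument is in the same spirit but needs one additional preparation: I would first choose $n \gg 0$ so that $E \otimes O(nA)$ is an ample vector bundle (a classical consequence of $A$ being ample), equivalently so that $H := O_{\Proj(E)}(1) + \pi^{*}(nA)$ is an ample integral divisor on $\Proj(E)$. For any $z \in \Proj(E)$ with $\pi(z) \notin D$, I would apply ample-modulo-$D$ to $\mathscr{F} = O_{\Yb}(-nA)$ to produce $m_{0}$ such that $S^{m}(E) \otimes O(-nA)$ is generated by global sections at $\pi(z)$ for every $m > m_{0}$. The tautological surjection $\pi^{*}S^{m}(E) \twoheadrightarrow O_{\Proj(E)}(m)$, which is surjective on every fibre of $\pi$, lifts these generating sections to sections of $O_{\Proj(E)}(m) \otimes \pi^{*}O(-nA)$ generating the fibre at $z$. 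Using the identity
$$m \cdot O_{\Proj(E)}(1) - H \;=\; (m-1)\cdot O_{\Proj(E)}(1) - n\,\pi^{*}(A),$$
I would then conclude $z \notin \textbf{Bs}\bigl((m-1)O_{\Proj(E)}(1) - n\pi^{*}A\bigr)$ for $m-1 > m_{0}$, hence $z \notin \mathbb{B}(O_{\Proj(E)}(1) - (1/m)H)$, and therefore $z \notin \mathbb{B}_{+}(O_{\Proj(E)}(1))$.

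The argument is largely a bookkeeping exercise rather than a genuine technical obstacle. The one step to be careful with is the choice of the twist $n$ making $H$ ample on $\Proj(E)$: it must be chosen so that subtracting $H$ from a suitable multiple of $O_{\Proj(E)}(1)$ produces precisely the twist $-n\pi^{*}(A)$ that the ample-modulo-$D$ hypothesis on $\Yb$ delivers. Once this alignment is arranged, the pull-back of global sections via the tautological quotient and the obvious inclusion of stable base loci close the argument in both parts.
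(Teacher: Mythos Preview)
Your proof is correct. For the first inclusion your argument is essentially identical to the paper's: fix $y\notin D$, apply the ample-modulo-$D$ hypothesis with the coherent sheaf $O_{\Yb}(-A)$ to produce an $m$ with $S^{m}E\otimes O(-A)$ globally generated at $y$, and conclude $y\notin\mathbb{B}(E\la -A/m\ra)\supseteq\mathbb{B}_{+}(E)$.

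The genuine difference lies in the second inclusion. The paper does not argue directly: it simply invokes \cite[Proposition~3.2]{bauer2015positivity}, which states that $\mathbb{B}_{+}\bigl(O_{\Proj(E)}(1)\bigr)\subset\pi^{-1}\bigl(\mathbb{B}_{+}(E)\bigr)$, and then combines this with the first part. Your approach instead reproves the needed special case by hand: you manufacture an ample class $H=O_{\Proj(E)}(1)+n\pi^{*}A$ on $\Proj(E)$, lift the generating sections of $S^{m}E\otimes O(-nA)$ through the tautological surjection $\pi^{*}S^{m}E\twoheadrightarrow O_{\Proj(E)}(m)$, and match indices so that the resulting line bundle is a positive multiple of $O_{\Proj(E)}(1)-\tfrac{1}{m}H$. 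What your route buys is self-containment (no black-box citation); what the paper's route buys is brevity and a cleaner conceptual statement, since the inclusion $\mathbb{B}_{+}(O_{\Proj(E)}(1))\subset\pi^{-1}(\mathbb{B}_{+}(E))$ holds for arbitrary vector bundles and isolates exactly the passage from $\Yb$ to $\Proj(E)$.
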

\begin{proof}
Pick an arbitrary point $y\in \Yb\setminus D$. To show $\mathbb{B}_{+}(E)\subset D$, it is enough to show that $y\notin \mathbb{B}_{+}(E).$  Let $A$ be an ample line bundle. By Definition \ref{ModDDef}, there exist $n$ such that $(S^n E -A)_{|y}$ is generated by $H^0(Y, S^n E -A)$ and therefore $y\notin \mathbb{B}(E\la -\frac{1}{n}A \ra ).$  Consequently, $y\notin \mathbb{B}_{+}(E)$.

 \cite[Proposition 3.2]{bauer2015positivity} tells us that  $\mathbb{B}_{+} \bi (O_{\Proj(E)}(1) \bi ) \subset \pi^{-1}(\mathbb{B}_{+}(E))$ and therefore the previous part gives that $\mathbb{B}_{+}(O_{\Proj(E)}(1))\subset \pi^{*}(D).$ 

\end{proof}

  Suppose $L$ is a line bundle on a projective variety $\Yb$. It is well-known that $L$ is big if and only if $\mathbb{B}_{+}(L)\neq X$. This follows that if $E$ is ample modulo $D$, then $E$ is big in the sense that $O_{\Proj(E)}(1)$ is big. Another well-know fact is that $L$ is ample if and only if $\mathbb{B}_{+}(L)= \emptyset .$ For vector bundles, we will use the following lemma to go from the ampleness modulo $D$ to the ampleness:

 \begin{Lemma}\label{ModDonD}
    If $E$ is ample modulo $D$ and $E_{|D}$ is ample, then $E$ is ample.
 \end{Lemma}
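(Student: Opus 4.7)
The plan is to verify the Nakai--Moishezon criterion on $Y' := \Proj(E)$ for the line bundle $L := O_{\Proj(E)}(1)$; by Definition \ref{PosTwist} its ampleness is equivalent to that of $E$. Both hypotheses feed in cleanly. Proposition \ref{Aug} gives $\mathbb{B}_{+}(L) \subset \pi^{-1}(D)$, while the ampleness of $E_{|D}$ means $L_{|\pi^{-1}(D)} = O_{\Proj(E_{|D})}(1)$ is ample. It therefore remains to show $(L_{|V})^{\dim V} > 0$ for every irreducible subvariety $V \subset Y'$.

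I would argue by strong induction on $\dim V$, the case $\dim V = 0$ being trivial. If $V \subset \pi^{-1}(D)$, then $L_{|V}$ is the restriction of an ample line bundle and the required positivity is automatic. Otherwise $V \not\subset \pi^{-1}(D)$, hence $V \not\subset \mathbb{B}_{+}(L)$. Fixing any ample line bundle $A$ on $Y'$ and using the description $\mathbb{B}_{+}(L) = \bigcap_{\epsilon > 0}\mathbb{B}(L - \epsilon A)$, one obtains $\epsilon \in \Q_{>0}$, $m > 0$, and a global section of $m(L - \epsilon A)$ not vanishing identically on $V$; restricting to $V$ expresses $mL_{|V}$ numerically as $m\epsilon A_{|V}$ plus an effective class, so $L_{|V}$ is big. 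Moreover, by the inductive hypothesis applied to every curve $C \subset V$, we get $L \cdot C > 0$ for each such $C$, so $L_{|V}$ is nef. A nef and big line bundle has top self-intersection equal to its (positive) volume, giving $(L_{|V})^{\dim V} > 0$ and closing the induction.

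The only nontrivial step is the bootstrap $V \not\subset \mathbb{B}_{+}(L) \Rightarrow L_{|V}$ is big, which follows by directly unpacking the definition of the augmented base locus; combined with the inductive nefness it yields the nef-and-big situation in which top self-intersection coincides with volume, and the induction closes. I do not anticipate a substantial obstacle beyond these standard ingredients, since both hypotheses align neatly with the Nakai--Moishezon reduction.
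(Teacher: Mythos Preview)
Your proposal is correct and very close in spirit to the paper's proof, but it is organized differently and avoids one external citation. Both arguments reduce to showing $L = O_{\Proj(E)}(1)$ is ample, and both use Proposition~\ref{Aug} to get $\mathbb{B}_{+}(L) \subset \pi^{-1}(D)$ together with the ampleness of $L_{|\pi^{-1}(D)}$. The paper first proves $L$ is nef by exactly the curve argument you run in your $\dim V = 1$ step, and then invokes Birkar's theorem (\cite[Theorem~1.4]{birkar2017augmented}) that for nef $L$ one has $\mathbb{B}_{+}(L) = \bigcup_{L_{|V}\text{ not big}} V$, concluding $\mathbb{B}_{+}(L) = \emptyset$. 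You instead run Nakai--Moishezon directly: the same ``$V \not\subset \mathbb{B}_{+}(L) \Rightarrow L_{|V}$ big'' observation, combined with nefness obtained inductively and the standard fact that a nef and big line bundle has positive top self-intersection, replaces the appeal to Birkar. Your route is a bit more self-contained; the paper's is shorter once Birkar's result is taken as a black box. One cosmetic point: in your induction the case $\dim V = 1$ does not literally use the inductive hypothesis on curves (there are none of smaller dimension), but bigness on a curve already means positive degree, so the step is harmless.
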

\begin{proof}
Let $L$ be the line bundle $O_{\Proj(E)}(1)$ on $\Proj(E).$ To conclude ampleness it is sufficient to prove that for any subvariety $V\subset \Proj(E)$ with dimension $m,$ one has $c_1(L)^{\operatorname{dim} V} \cdot V > 0.$ We prove it by induction on $m:$ 

First, we show that for every irreducible curve $C$ on $\Proj (E),$ we have $L\cdot C>0.$ If $C \subset \pi^*(D),$ then $C \cdot L > 0$ because $L_{|\pi^*(D)}$ is ample. If $C$ is not contained in $D,$ then there is a point $x\in C$ such that $x \not\in \pi^*(D).$ Since $E$ is ample modulo $D$, Proposition \ref{Aug} gives that $\mathbb{B}_{+}( L ) \subset \pi^*(D)$ and therefore $x \not \in \mathbb{B}_{+}( L ).$ It means that there exist $a,b \in \Z^{+}$ such that $x \not \in \textbf{Bs}(b L - a A ),$ where $A$ is an ample divisor on $\Proj(E).$ Consequently, the zero locus of $b L - a A$ is not contained in $C$ and it follows that $C$ intersects transversely with $b L - a A.$ Hence.  $C \cdot (b L - a A)  \ge 0$ which gives that $C \cdot L >0.$  

Similarly, when $m>1,$ there are two cases to consider. If $V \subset \pi^{*}(D)$, then the result comes from the fact that $E_{|D}$ is ample. If  $V \not\subset \pi^{*}(D)$, then $V \not \subset \mathbb{B}_{+}( L ).$ Therefore, there exists $a,b \in \Z^{+}$ and a section $s \in H^0(bL - aA)$ such that $V \not\subset \{s = 0\}$. Let $W$ be the $(m- 1)-$cycle on $V$ associated with the schematic locus $\{s_{|V}=0\}$. It is either $0$ or a formal sum of
$(m- 1)-$dimensional subvarieties with positive coefficients. Therefore, one has
$$(bL - aA)\cdot V \sim _{rat} W,$$
and therefore $(bL)\cdot  V \sim_{rat}  W+ (aA) \cdot V.$  As A is ample, the class $A\cdot V$ is provided by an
$(m-1)-$cycle with only positive coefficients. Since
$$
b L^{m}\cdot V= L^{m-1}\cdot(W+aA\cdot V)
$$
and the second factor on the right hand side is a $(m-1)-$dimensional cycle with only
positive coefficients, so we get the result by induction.

\end{proof}

\begin{definition}
We say that $E\la r D \ra$ is ample modulo $D$ if one of the following two equivalent properties holds:
\begin{enumerate} [label=(\roman*)]
\item  $E\la r D \ra$ has a symmetric power which is $\Q$-isomorphic to an ample modulo $D$ vector bundle.
\item  If $r=a/b$ and $b$ is positive, then the vector bundle $S^b E \otimes O_{\Yb}( a D)$ is ample modulo $D$. 
\end{enumerate}
 
\end{definition}

\begin{definition} Consider two $\Q-$ vector bundles $E \la \frac{a}{b} D\ra $ and $F \la \frac{a'}{b} D\ra, $ where $r$ and $s$ are two rational numbers. We say that $E \la r D\ra$ is a $\Q$-subsheaf of $F \la s D\ra $ if for every integer $a,a'$ and every positive integer $b$ such that $r=\frac{a}{b}$ and $s=\frac{a'}{b},$ the vector bundle $S^b E \otimes O_{\Yb}( a D )$ is a subsheaf of $S^ {b} F \otimes O_{\Yb}( a' D ).$ 
\end{definition}

\begin{Lemma} \label{PositiveSub}
Let $E$ and $F$ be vector bundles on $\Yb$ and they are isomorphic over $Y:=\Yb \setminus{D}.$
Suppose $E \la r D \ra$ is a $\Q$-subsheaf of $F \la s D \ra ,$ and $E \la r D \ra$
is ample modulo $D$. Then $F \la s D \ra$ is ample modulo $D.$
\end{Lemma}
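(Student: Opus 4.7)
The plan is to reduce to a statement about a single vector bundle via Definition~\ref{PosTwist}(iii), and then exploit the functoriality of $S^m$ together with the fact that the hypothesized inclusion is an isomorphism over $Y$. Write $r=a/b$ and $s=a'/b$ with $b\in \Z_{>0}$, and set $E':=S^bE\otimes O_{\Yb}(aD)$ and $F':=S^bF\otimes O_{\Yb}(a'D)$. By the definition of $\Q$-subbundle combined with the hypothesis on $E\la rD\ra$, I obtain a sheaf inclusion $\iota:E'\hookrightarrow F'$, and $E'$ is ample modulo $D$ in the sense of Definition~\ref{ModDDef}. Moreover, since $E\cong F$ on $Y$ while $O_{\Yb}(kD)|_Y$ is trivial for every $k$, the restriction $\iota|_Y:E'|_Y\to F'|_Y$ is an isomorphism (this is how the lemma is applied in practice, e.g.\ to $\Cot\hookrightarrow\lgcot$). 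It therefore suffices to show that $F'$ is ample modulo $D$.

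Fix a coherent sheaf $\mathscr{F}$ on $\Yb$. Ampleness modulo $D$ of $E'$ supplies an $m_0=m_0(\mathscr{F})$ such that for every $m\ge m_0$ and every $y\in Y$ the evaluation map $H^0(\Yb,\mathscr{F}\otimes S^mE')\to(\mathscr{F}\otimes S^mE')(y)$ is surjective. Applying the functor $S^m$ to $\iota$ produces a natural morphism $\varphi_m:S^mE'\to S^mF'$ whose restriction to $Y$ is again an isomorphism, because symmetric powers preserve the isomorphism $\iota|_Y$. Tensoring $\varphi_m$ with $\mathscr{F}$ and passing to global sections, naturality of evaluation yields the commutative square
\[
\begin{array}{ccc}
H^0(\Yb,\mathscr{F}\otimes S^mE') & \longrightarrow & (\mathscr{F}\otimes S^mE')(y) \\
\downarrow & & \downarrow\cong \\
H^0(\Yb,\mathscr{F}\otimes S^mF') & \longrightarrow & (\mathscr{F}\otimes S^mF')(y),
\end{array}
\]
in which the top arrow is surjective and the right vertical arrow is an isomorphism because $y\in Y$. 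Commutativity forces the bottom arrow to be surjective for every $y\in Y$, so $\mathscr{F}\otimes S^mF'$ is globally generated on $Y$ for all $m\ge m_0$. Thus $F'$, and hence $F\la sD\ra$, is ample modulo $D$.

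The only mildly subtle point to keep in mind is that $\varphi_m$ need not be globally injective on $\Yb$: the symmetric power of an inclusion of vector bundles can develop a torsion kernel along $D$ when the quotient $F'/E'$ fails to be locally free. This is harmless for the proof, however, since the diagram chase uses only that $\varphi_m|_Y$ is an isomorphism, which is automatic from $\iota|_Y$ being one.
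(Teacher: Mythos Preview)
Your proof is correct and follows essentially the same approach as the paper: clear denominators to reduce to honest vector bundles $E'=S^bE\otimes O_{\Yb}(aD)$ and $F'=S^bF\otimes O_{\Yb}(a'D)$, then use that the inclusion $E'\hookrightarrow F'$ is an isomorphism over $Y$ to transport global generation of $\mathscr{F}\otimes S^mE'$ to $\mathscr{F}\otimes S^mF'$ at points of $Y$. Your explicit commutative square and your remark that only the isomorphism of $\varphi_m$ over $Y$ is needed make the argument slightly cleaner than the paper's version.
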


\begin{proof} Take $a,a',b \in \Z$ such that $a/b=r,a'/b'=s$ and $b$ is positive. Since the $\Q$-vector bundle $E \la \frac{a}{b} D \ra$ is ample modulo $D,$ there exists an integer $l\in \Z$ such that the vector bundle $S^{b l} E \otimes O_{\Yb}( a l D )$ is ample modulo $D.$    
It means that for a given coherent sheaves $\mathscr{F}$ over $\Yb$ there exists $m_{0}>0$ such that for every $m>m_{0}$ and every $y\in Y$,  $\bi(\mathscr{F} \otimes S^{b l m} E \otimes O_{\Yb}(a l D) \bi )_{|y}$ is generated by $H^{0}( \Yb, \mathscr{F} \otimes S^{b l m} E \otimes O_{\Yb}(a l D) )_{|y}$. Because $E \la r D \ra$ is a $\Q$-subsheaf of $F \la s D \ra$, we obtain that $\mathscr{F} \otimes S^{b l m} E \otimes O_{\Yb}(a l D))$ is a subsheaf of $\mathscr{F} \otimes S^{b l m} F \otimes O_{\Yb}(a' l D))$. This implies that 
 $$H^{0}\bi(\Yb,\mathscr{F} \otimes S^{b l m} E \otimes O_{\Yb}(a l D) \bi)
 \subseteq
 H^{0}\bi(\Yb,\mathscr{F} \otimes S^{b l m} F \otimes O_{\Yb}(a' l D) \bi).$$
 
 Since these two sheaves restricts to the same sheaf on $Y$, for every $y\in Y$ and for every $m>m_{0}$, $ \mathscr{F} \otimes S^{b l m} E \otimes O_{\Yb}(a' l D)$ is generated by 
$H^{0}\bi(\Xb,F\otimes S^m E_{2} \bi).$
\end{proof}

\begin{Lemma} \label{smallertwist}
Let $r \in \Q^+$ be a positive rational number. If $E \la - r D\ra$ is ample modulo $D$, then $E \la - s D \ra$ is ample modulo $D$ for any rational $s<r.$
\end{Lemma}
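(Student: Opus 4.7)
The plan is to realize $E\la -rD\ra$ as a $\Q$-subbundle of $E\la -sD\ra$ whose underlying vector bundle is literally the same, and then to invoke Lemma \ref{PositiveSub} to transfer the ample-modulo-$D$ property from the former to the latter.

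Concretely, I would first choose a common positive denominator $b\in\Z^+$ and write $r=a_1/b$, $s=a_2/b$; the hypothesis $s<r$ then translates into $a_2<a_1.$ By the definition of ampleness modulo $D$ for a $\Q$-vector bundle, the assumption that $E\la -rD\ra$ is ample modulo $D$ is exactly the statement that the genuine vector bundle $S^b E\otimes O_{\Yb}(-a_1 D)$ is ample modulo $D.$ Since $(a_1-a_2)D$ is an effective integral divisor on $\Yb,$ its canonical defining section $\sigma\in H^0\bi(\Yb,\,O_{\Yb}((a_1-a_2)D)\bi)$ produces, by tensoring with $S^b E\otimes O_{\Yb}(-a_1 D),$ an inclusion of vector bundles
$$
S^b E\otimes O_{\Yb}(-a_1 D)\;\hookrightarrow\; S^b E\otimes O_{\Yb}(-a_2 D),
$$
which is an isomorphism over $Y:=\Yb\setminus D.$ Unwinding the definition of $\Q$-subbundle set up earlier in \textsection \ref{Sec4}, this is precisely the assertion that $E\la -rD\ra$ is a $\Q$-subbundle of $E\la -sD\ra.$

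Since both $\Q$-vector bundles have the same underlying vector bundle $E,$ the ``isomorphic on $Y$'' hypothesis of Lemma \ref{PositiveSub} is automatic, and that lemma applied to the inclusion above directly yields that $E\la -sD\ra$ is ample modulo $D.$ The whole argument is a formal manipulation inside the framework of $\Q$-vector bundles developed in \textsection \ref{Sec4}; there is no substantive obstacle. In particular, the sign of $s$ never enters the argument, so the statement covers negative $s$ and $s=0$ as well.
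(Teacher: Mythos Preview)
Your argument is correct and follows essentially the same route as the paper: write $r=a_1/b$, $s=a_2/b$ with $a_2<a_1$, observe that $S^bE\otimes O_{\Yb}(-a_1D)$ is a subbundle of $S^bE\otimes O_{\Yb}(-a_2D)$ which restricts to an isomorphism on $Y$, and invoke Lemma~\ref{PositiveSub}. Your write-up is in fact a bit more careful than the paper's, which tacitly assumes $a'$ is positive; as you note, the argument works for any rational $s<r$.
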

\begin{proof}
Let $a,a', b \in \Z$ be positive integers such $a/b= r$ and $a'/b= s.$ Since $s<r,$ we get that $a' < a$ and therefore $S^b E \otimes_{\Yb}(-a D )$ is a subsheaf of $S^b E \otimes_{\Yb}(-a' D )$ and the claim follows from Lemma \ref{PositiveSub}. 
\end{proof}

\begin{Lemma} \label{neat}
Suppose $f:Y' \longrightarrow Y$ is a finite surjective map and $E$ is a vector bundle on $Y.$ Then, $f^* E$ is semi-ample, then $E$ is so.   
\end{Lemma}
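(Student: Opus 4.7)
The plan is to first reduce the statement to a descent result for semi-ampleness of \emph{line bundles} under finite surjective morphisms, and then prove that line bundle descent by passing to a Galois closure and taking invariant sections. Throughout, ``semi-ample'' for a vector bundle $E$ means that some symmetric power $S^m E$ is globally generated, equivalently (cf.\ Definition \ref{PosTwist}) that the line bundle $O_{\Proj(E)}(1)$ is semi-ample in the usual sense for line bundles.

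For the reduction step, observe that the finite surjective morphism $f\colon Y'\to Y$ induces a commutative diagram
\[
\begin{tikzcd}
\Proj(f^*E) \arrow[r,"\tilde f"] \arrow[d] & \Proj(E) \arrow[d] \\
Y' \arrow[r,"f"] & Y
\end{tikzcd}
\]
in which $\tilde f$ is again finite and surjective, and where $O_{\Proj(f^*E)}(1)=\tilde f^*\,O_{\Proj(E)}(1)$. Hence it suffices to show: \emph{if $L$ is a line bundle on a projective variety $Z$ and $g\colon Z'\to Z$ is finite surjective such that $g^*L$ is semi-ample, then $L$ is semi-ample.}

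For the line bundle statement, first replace $Z'$ by (the normalization of) its Galois closure over $Z$; since the pullback of a semi-ample line bundle along any morphism is semi-ample, we may assume $g\colon Z'\to Z$ is a finite Galois covering with group $G$, and that $Z'$ is normal. Choose $m>0$ with $g^*L^{\otimes m}$ base-point free. For an arbitrary $z\in Z$ fix a preimage $z'\in g^{-1}(z)$. Since the $G$-translates $\{\sigma z' : \sigma\in G\}$ form a finite set of points and, by base-point freeness, the hyperplane of sections of $g^*L^{\otimes m}$ vanishing at any fixed point is a proper linear subspace, a general section $s\in H^0(Z',g^*L^{\otimes m})$ satisfies $s(\sigma z')\neq 0$ for every $\sigma\in G$. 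Form the product
\[
\tilde s \;:=\; \prod_{\sigma\in G}\sigma^*s \;\in\; H^0\!\bigl(Z',\,g^*L^{\otimes m|G|}\bigr),
\]
which is $G$-invariant by construction and nonzero at $z'$. In characteristic zero one has $(g_*O_{Z'})^G=O_Z$, so taking $G$-invariants in $g_*g^*L^{\otimes m|G|}=L^{\otimes m|G|}\otimes g_*O_{Z'}$ gives
\[
H^0\!\bigl(Z',g^*L^{\otimes m|G|}\bigr)^G \;=\; H^0\!\bigl(Z,L^{\otimes m|G|}\bigr),
\]
and thus $\tilde s$ descends to a section $\bar s\in H^0(Z,L^{\otimes m|G|})$ with $g^*\bar s=\tilde s$; in particular $\bar s(z)\neq 0$. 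As $z\in Z$ was arbitrary, $L^{\otimes m|G|}$ is base-point free, so $L$ is semi-ample, which completes the argument.

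The main technical point to be careful about is the Galois reduction: one needs $Z'$ to be normal in order to form the Galois closure of the function field extension $K(Z)\hookrightarrow K(Z')$ inside its algebraic closure and obtain a finite Galois cover $\tilde Z\to Z$ factoring through $Z'$. Since all varieties appearing in the paper are projective (so one can pass to normalizations, still finite surjective) and we are over $\C$, this causes no difficulty; any remaining subtlety about the identification $(g_*O_{Z'})^G=O_Z$ is resolved by the existence of the normalized trace map in characteristic zero.
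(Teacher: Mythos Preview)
Your proof is correct and follows essentially the same approach as the paper: both reduce to the line bundle case by passing to $\Proj(E)$ and using $O_{\Proj(f^*E)}(1)=\tilde f^*O_{\Proj(E)}(1)$. The paper then simply cites Fujita for the descent of semi-ampleness of line bundles under finite surjections, whereas you spell out that argument via Galois closure and norm of sections.
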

\begin{proof} By passing to the projective bundles, the lemma follows from the corresponding facts for line bundles ( see \cite[1.20]{Fujita}) 
\end{proof}

\section{Positivity of $\lgcot$}
\label{sec3}
Throughout this section, we adopt the convention that $\Gamma$ is a torsion-free lattice in $\operatorname{PU}(n,1),$ $X=\Gamma \backslash\mathbb{B}^n $ is a complex hyperbolic manifold with cusps, $\Xb$ is the toroidal compactification of $X$ and $d$ is the uniform depth of cusps. Additionally, we assume that $\Xb$ does not have any orbifold point. 

The goal of this section is to prove the ampleness of the twisted logarithmic cotangent bundle as stated in Theorem \ref{SlopLog}. 
To prove this theorem, we need to construct a singular hermitian metric using the properties of the Bergman metric proved in the previous section, namely Proposition \ref{Positive form} and Proposition \ref{Good metric}.  
By virtue of Theorem \ref{SlopLog}, we will conclude the positivity properties of $\lgcot$ stated in Corollary \ref{logPositivity}.

Suppose $\Yb$ is a smooth projective variety and $D$ is divisor on $\Yb$. As proved in \cite{Ascher, Brotbek}, if $dim \ \Yb>1$, the logarithmic cotangent bundle  $\Omega^1_{\Yb}(\log(D))$ is never ample because its restriction to $D$ is an extension of the trivial bundle. Therefore, to describe a positivity properties of $\Omega^1_{\Yb}(\log(D))$ we need a weaker positivity notion.

In the case of the toroidal compactification $\Xb$ of $X$, the first step toward understanding the positivity of the cotangent bundles is  to study the positively of the cotangent bundle restricted to the boundary divisor $D$. The connected components of $D$ are \'etale quotients of abelian varieties whose conormal bundles are ample and therefore considering the conormal bundle exact sequence
\begin{align} \label{Conormal}
0\longrightarrow O_{D}(-D)\longrightarrow \Omega^{1} _{\Xb|D} \longrightarrow \Omega^{1}_{D} \longrightarrow 0,   
\end{align}
on $D$, one can observe that $\Omega^1_{\Xb|D}$ is an extension of a vector bundle with vanishing Chern classes, by an ample line bundle. Moreover, we will prove that $ \Omega^{1} _{\Xb|D}$ is semi-ample in the sense that $O(1)$ on $\Proj( \Omega^{1} _{\Xb|D})$ is semi-ample. To this end, we need the following lemma:
\begin{Lemma} \label{Semiampleness}
Suppose $0\to G \to E \to F \to 0$ is an exact sequence of vector bundles on a projective variety $\Yb$ such that $H^1(\Yb,G )=0.$ If $S^m G$ and $F$ are globally generated, then $O_{\Proj(E)}(m)$ is globally generated. 
\end{Lemma}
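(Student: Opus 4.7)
The plan is to prove that global sections of $O_{\Proj(E)}(m)$ generate at every point by splitting $\Proj(E)$ into the subvariety $\Proj(F) \subset \Proj(E)$—the locus of $(y,[L])$ with $L \subset F^*(y)$—and its complement, using $S^m G$ on the latter and $F$ together with the cohomological hypothesis on the former.

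Away from $\Proj(F)$, the composition $\pi^* G \hookrightarrow \pi^* E \twoheadrightarrow O_{\Proj(E)}(1)$ is a surjection of sheaves, since this is precisely the locus where the line $L \subset E^*(y)$ has nonzero image in $G^*(y)$ (equivalently, where the composition $G(y) \hookrightarrow E(y) \twoheadrightarrow L^*$ is nonzero). Taking $m$-th symmetric powers, $\pi^* S^m G \twoheadrightarrow O_{\Proj(E)}(m)$ is surjective on $\Proj(E) \setminus \Proj(F)$, and since $S^m G$ is globally generated by hypothesis, the pullbacks of its global sections—which lie inside $H^0(\Yb, S^m E) = H^0(\Proj(E), O_{\Proj(E)}(m))$ via the inclusion $S^m G \hookrightarrow S^m E$—generate $O_{\Proj(E)}(m)$ on this open set.

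For a point $(y,[L]) \in \Proj(F)$, the hypothesis $H^1(\Yb, G) = 0$ combined with the long exact sequence of $0 \to G \to E \to F \to 0$ gives that $H^0(\Yb, E) \to H^0(\Yb, F)$ is surjective. Since $F$ is globally generated and $L \subset F^*(y)$ yields a surjection $F(y) \twoheadrightarrow L^*$, I pick $s \in H^0(\Yb, F)$ whose image in $L^*$ is nonzero, lift it to $\tilde s \in H^0(\Yb, E)$, and consider $\tilde s^m \in H^0(\Yb, S^m E) = H^0(\Proj(E), O_{\Proj(E)}(m))$. Because $L \subset F^*(y)$, the evaluation $S^m E(y) \to (L^*)^{\otimes m}$ at $(y,[L])$ factors through $S^m E(y) \to S^m F(y)$, so $\tilde s^m$ evaluates to the $m$-th power of the nonzero image of $s(y)$ in $L^*$ and hence does not vanish at $(y,[L])$. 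The main subtlety I expect to handle is tracking the paper's convention—that $\Proj(E)$ parametrizes lines in $E^*$ with $O_{\Proj(E)}(1)_{(y,[L])} = L^*$—carefully enough to justify both that $\pi^* G \to O_{\Proj(E)}(1)$ is surjective exactly off $\Proj(F)$ and that the evaluation of $\tilde s^m$ at $(y,[L])$ really equals $(s(y)|_L)^m$; once these identifications are in place the two cases above combine to give the global generation of $O_{\Proj(E)}(m)$.
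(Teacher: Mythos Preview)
Your proposal is correct and follows essentially the same approach as the paper: both split $\Proj(E)$ according to whether the composite $G_y\to E_y\to O_{\Proj(E)}(1)_{|p}$ is zero or not (equivalently, whether $p\in\Proj(F)$), use $H^0(\Yb,S^mG)\hookrightarrow H^0(\Yb,S^mE)$ together with global generation of $S^mG$ on the nonzero locus, and use surjectivity of $H^0(\Yb,E)\to H^0(\Yb,F)$ from $H^1(\Yb,G)=0$ together with global generation of $F$ on the zero locus. The only cosmetic difference is that the paper observes in the second case that already $O_{\Proj(E)}(1)_{|p}$ is globally generated (hence so is $O_{\Proj(E)}(m)_{|p}$), whereas you produce the explicit section $\tilde s^{\,m}$; these are the same thing.
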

\begin{proof}
 
$H^1(\Yb,G )=0$ gives the exact sequence of global sections:
\begin{align*}
    0\longrightarrow H^{0}(\Yb,G)\longrightarrow H^{0}(\Yb, E)\longrightarrow H^{0}(\Yb,F) \longrightarrow 0.
\end{align*}
This implies that $H^0(\Yb,S^m G)$ injects into $H^0(\Yb,S^m F)$ and $H^{0}(\Yb, E)$ surjects onto $H^{0}(\Yb,F).$

To prove that $O_{\Proj(E)}(m)$ is globally generated, we need to show that for every $p \in \Proj(E),$ the fiber $O_{\Proj(E)}(m)_{|p}$ is generated by global sections of $O_{\Proj(E)}(m).$ 
Suppose $p \in \Proj(E)$ is determined by $y \in \Yb$ and an one-dimensional quotient $E_p \to L_p.$
Consider the map $\eta_{y}: G_y \to L_y.$ We may have two cases: 
\begin{enumerate}
    \item $\eta_{y} : G_y \to L_y$ is the zero map. In this case, $\eta_{y}$ factors through $F_y.$ On the other hand, $H^{0}(\Yb, E)$ surjects onto $H^{0}(\Yb,F),$ and $F$ is globally generated. Since 
    $$H^{0}(\Yb, E) \cong H^{0}(\Proj(E), O_{\Proj(E)}(1) ), $$ the fiber $O_{\Proj(E)}(1)_{|p}$ is generated by the global sections and therefore $O_{\Proj(E)}(m)_{|p}$ is generated by global sections.  
    \item $\eta_{y} : G_y \to L_y $ is a non-zero map. Since $H^0(\Yb,S^m G)$ injects into $H^0(\Yb,S^m E)$ and $S^m G$ is globally generated, the global sections $ H^{0}(\Proj(E), O_{\Proj(E)}(m)) \cong H^0(\Yb,S^m E)$ generates the fiber $O_{\Proj(E)}(m)_{|p}.$

    Hence, $O_{\Proj(E)}(m)$ is globally generated, as desired. 
\end{enumerate}

\end{proof}

Now, we can study the positivity of the twisted cotangent bundle restrict to the boundary divisor:
\begin{Proposition} 
\label{NefBoundry}
Suppose that the dimension of $X$ is greater than $1$ and $r$ is a rational number. Consider the $\Q$-vector bundles  
$$E_{r}:=\Omega^{1} _{\Xb|D } \la - r D_{|D} \ra .$$
The following hold:
\begin{enumerate} [label=(\roman*)]
     \item If $r=0$, then $E_r=\Omega^{1} _{\Xb|D}$ is semi-ample, but not ample. \label{NefBoundryB}
     
    \item If $r>0$, then $E_r$ is ample. 
   
\end{enumerate}
\end{Proposition}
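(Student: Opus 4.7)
The plan is to handle parts (i) and (ii) in turn, with (ii) following from (i) via Lemma \ref{ampletwist}. For part (i), I would first dispatch the non-ampleness assertion using Lemma \ref{NefSequence}(i) applied to the conormal sequence \eqref{Conormal}: ampleness of $\Omega^{1}_{\Xb|D}$ would force ampleness of its quotient $\Omega^{1}_{D}$. However, each connected component $D_{i}$ of $D$ is an \'etale quotient of an abelian variety $A_{i}$, so pulling $\Omega^{1}_{D}$ back along the finite \'etale cover $f \colon A := \bigsqcup_{i} A_{i} \to D$ yields the trivial bundle $\Omega^{1}_{A}$. Since the hypothesis $\dim X > 1$ forces $\dim A \geq 1$, this trivial bundle cannot be ample, a contradiction.

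The main work lies in the semi-ampleness half of (i). My plan is to reduce to the same \'etale cover $f$: by Lemma \ref{neat}, it suffices to show that $f^{*}\Omega^{1}_{\Xb|D}$ is semi-ample. Pulling back \eqref{Conormal} gives a short exact sequence
$$
0 \longrightarrow G := f^{*}O_{D}(-D) \longrightarrow f^{*}\Omega^{1}_{\Xb|D} \longrightarrow F := \Omega^{1}_{A} \longrightarrow 0,
$$
to which I would apply Lemma \ref{Semiampleness}. Here $F$ is trivial and hence globally generated, while $G$ is ample on $A$ as the finite pullback of the ample conormal bundle $O_{D}(-D)$. Since $K_{A} = O_{A}$, Kodaira vanishing yields $H^{1}(A, G) = 0$, and $S^{m} G = G^{\otimes m}$ is globally generated for all sufficiently large $m$. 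Lemma \ref{Semiampleness} then produces such an $m$ for which $O_{\Proj(f^{*}\Omega^{1}_{\Xb|D})}(m)$ is globally generated, establishing semi-ampleness of $f^{*}\Omega^{1}_{\Xb|D}$, and Lemma \ref{neat} descends this to $\Omega^{1}_{\Xb|D}$.

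Part (ii) is then a one-line consequence: the semi-ampleness from (i) implies $\Omega^{1}_{\Xb|D}$ is nef, and since $-D_{|D}$ is the ample conormal bundle, $-rD_{|D}$ is an ample $\Q$-divisor for every rational $r > 0$. Lemma \ref{ampletwist} immediately gives ampleness of $E_{r} = \Omega^{1}_{\Xb|D}\la -rD_{|D} \ra$. The main obstacle throughout is the semi-ampleness step in (i), particularly verifying that the hypotheses of Lemma \ref{Semiampleness} are satisfied after lifting to the abelian cover (ampleness of the pullback conormal bundle, Kodaira vanishing, and global generation); once that is in place, the non-ampleness half is formal and part (ii) is immediate.
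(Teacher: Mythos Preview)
Your proposal is correct and follows essentially the same route as the paper's proof: both arguments pull the conormal sequence \eqref{Conormal} back along a finite \'etale cover by an abelian variety, invoke Kodaira vanishing (using $K_A=O_A$) and ampleness of the pulled-back conormal bundle to feed into Lemma~\ref{Semiampleness}, descend semi-ampleness via Lemma~\ref{neat}, and then obtain (ii) from Lemma~\ref{ampletwist}. The only cosmetic difference is that you treat $D$ all at once via $A=\bigsqcup_i A_i$, whereas the paper works on each component $D_i$ separately; this changes nothing substantive.
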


\begin{proof} 
 (i) Let $D_i$ be a connected component of $D$. We can write the exact sequence \ref{Conormal} on $D_i:$ 
\begin{align*} 
0\longrightarrow O_{D_i}(-D_i)\longrightarrow E_{0|D_i} \longrightarrow \Omega^{1}_{D_i} \longrightarrow 0.
\end{align*}

As $D_i$ is an \'etale quotient of abelian  variety, there is a finite \'etale map $f: D' \longrightarrow D_i,$ where $D'$ is an abelian variety. We pull back the previous exact sequence to $D':$

\begin{align*} 
0 \longrightarrow
f^* O_{D_i}(-D_i) \overset{\phi}{\longrightarrow}
f^* E_{0|D_i} \longrightarrow
f^* \Omega^{1}_{D_i}
\longrightarrow 0.
\end{align*}

 As $O_{D_i}(-D_i)$ is ample and $f$ is finite, $f^* O_{D_i}(-D_i)$ is ample. As $f$ is an \'etale map $f^* \Omega^1_{D_i} \cong \Omega^1_{D'}$ which is trivial because $D'$ is an abelian variety. Since $f^*E_{0|D_{i}}$ has a trivial quotient, $E_{0|D_{i}}$ can not be ample.
 
 To show semi-ampleness, note that $\Omega^1_{D_i}$ is in particular globally generated. Also, $f^* O_{D_i}(-D_i)$ is ample because $-D_{i|D_i}$ is ample and $f$ is finite. Therefore, there exist $m \in \Z^{+}$ such that $m f^* O_{D_i}(-D_i)$ is globally generated on $f^{-1}(D_i).$ Additionally, the Kodaira's vanishing theorem gives that $H^1(f^{-1}(D_i),f^* O_{D_i}(-D_i))=0.$ Thanks to Lemma \ref{Semiampleness}, we can conclude that $f^{*} E_{0|D_i}$ is semi-ample and therefore Lemma \ref{neat}, implies that $E_{0|D_i}$ is semi-ample.     
 
(ii) Since $E_{0|D_i}$ is in particular nef by (i) and the conormal bundle $-D_{D}$ is ample, we can conclude from Lemma \ref{ampletwist} that $E_{r}$ is ample.   
 
\end{proof} 

Recall the following exact sequence: 
\begin{align}\label{lgcotseq}
    0 \longrightarrow \Omega^{1}_{\Xb} \longrightarrow \Omega^{1}_{\Xb}(\log(D)) \overset{\operatorname{res}}{\longrightarrow} O_{D} \longrightarrow 0,
\end{align}
where the morphism $\operatorname{res}$ is the residue map sending $\dis\sum_{i=1}^{n-1}f_id\zeta_i+g \frac{dq}{q}$ to $g.$

\begin{Proposition}\label{AmpleLog}
Suppose that the dimension of $X$ is greater than $1$ and $r$ is a rational number. Consider the $\Q$-vector bundles  
$$F_{r}:=\lgcot_{|D} \la  - r D_{ | D } \ra .$$
The following hold: 
\begin{enumerate} [label=(\roman*)] 
    \item If $r=0$, then $F_r=\Omega^{1}_{\Xb}(\log(D)) _{|D}$ is nef, but not ample. 
    \item If $r>0$, then $F_r$ is ample.
\end{enumerate} 
\end{Proposition}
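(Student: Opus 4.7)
The plan is to reduce to a single connected component $D_i$ of $D$ (the components are pairwise disjoint, each attached to a single cusp) and then to exploit the Poincar\'e residue exact sequence along $D_i$,
\begin{equation*}
0 \longrightarrow \Omega^{1}_{D_i} \longrightarrow \lgcot_{|D_i} \longrightarrow O_{D_i} \longrightarrow 0,
\end{equation*}
which in local coordinates $(\zeta_1,\dots,\zeta_{n-1},q)$ around $D_i=\{q=0\}$ is obtained by sending $dq/q$ to $1$; the kernel is precisely the sheaf of regular forms on $D_i$. The preliminary ingredient I need is that $\Omega^{1}_{D_i}$ is nef: if $f\colon D'\to D_i$ is a finite \'etale cover by an abelian variety $D'$, then $f^{*}\Omega^{1}_{D_i}=\Omega^{1}_{D'}$ is trivial, and nefness of a vector bundle descends along finite surjective maps.

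For part (ii), I would twist the residue sequence by $-rD_{|D_i}$:
\begin{equation*}
0 \longrightarrow \Omega^{1}_{D_i}\la -rD_{|D_i}\ra \longrightarrow F_r \longrightarrow O_{D_i}\la -rD_{|D_i}\ra \longrightarrow 0.
\end{equation*}
Since $-D_{|D_i}$ is the ample conormal bundle of $D_i$, the quotient is ample for rational $r>0$, and Lemma \ref{ampletwist} applied to the nef bundle $\Omega^{1}_{D_i}$ twisted by the ample $\Q$-divisor $-rD_{|D_i}$ gives the ampleness of the subbundle. Lemma \ref{NefSequence}(ii) then yields that $F_r$ is ample.

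For part (i), nefness of $F_0$ can be deduced from part (ii) by passing to the limit $r\to 0^{+}$: ampleness of the $\Q$-divisor $O_{\Proj(F_0)}(1)-r\pi^{*}D_{|D_i}$ for every rational $r>0$ forces $(O_{\Proj(F_0)}(1)-r\pi^{*}D_{|D_i})\cdot C>0$ on every curve $C$, and letting $r\to 0^{+}$ shows $O_{\Proj(F_0)}(1)\cdot C\ge 0$, so $F_0$ is nef. The failure of ampleness is immediate: the trivial line bundle $O_{D_i}$ is a quotient of $F_0$ via the residue map, and by Lemma \ref{NefSequence}(i) ampleness of $F_0$ would force $O_{D_i}$ to be ample, contradicting $\dim D_i=n-1\ge 1$.

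The main technical point to verify is the exactness of the Poincar\'e residue sequence restricted to the smooth divisor $D_i$ (this requires some care since restricting the global residue sequence on $\Xb$ involves the torsion sheaf $O_D$, but can be seen directly in local coordinates). Once this and the nefness of $\Omega^{1}_{D_i}$ are in hand, the two positivity statements assemble formally from Lemmas \ref{ampletwist} and \ref{NefSequence}.
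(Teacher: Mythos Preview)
Your proof is correct and rests on the same residue exact sequence on $D_i$ as the paper's, but with two minor differences worth recording. First, you identify the kernel of the residue map $\lgcot_{|D_i}\to O_{D_i}$ directly as $\Omega^{1}_{D_i}$ (which is indeed the standard description of the restricted Poincar\'e residue sequence for a smooth divisor), whereas the paper instead views that kernel as the image of $\Omega^{1}_{\Xb|D_i}\to\lgcot_{|D_i}$ and invokes Proposition~\ref{NefBoundry} to conclude it is a quotient of a nef bundle, hence nef. Your route is more self-contained, since it bypasses Proposition~\ref{NefBoundry} entirely. Second, the logical order differs: the paper first obtains (i) by squeezing $F_0$ between two nef bundles and then deduces (ii) from nefness of $F_0$ plus Lemma~\ref{ampletwist}; you prove (ii) first via Lemma~\ref{NefSequence}(ii) on the twisted sequence and then recover (i) by letting $r\to 0^{+}$. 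The limiting argument is valid but unnecessary: once you know $\Omega^{1}_{D_i}$ and $O_{D_i}$ are both nef, the extension $F_0$ is nef immediately, which is the shorter path the paper takes.
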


\begin{proof}
 Let $D_i$ be a connected component of $D$. Restricting the exact sequence \eqref{lgcotseq} to $D_i$ gives the exact sequence: 
\begin{align*}
    \Omega^{1}_{\Xb|D_i} \overset{\phi_1}{\longrightarrow} F_{0|D_i} \overset{\phi_2}{\longrightarrow} O_{D_i} \longrightarrow 0,
\end{align*}
where $\phi_1$ is the inclusion and $\phi_2$ is the residue map sending $\dis\sum_{i=1}^{n-1}f_idz_i+g \frac{dq}{q}$ to $g_{|D_i}$
on an open set $U$ on which the local coordinate of $D_i$ is $(q=0)$. Therefore, we get the exact sequence 
\begin{align*}
0 \longrightarrow
    \operatorname{Im} (\phi_1) \longrightarrow F_{0|D_i} \overset{\phi_2}{\longrightarrow} O_{D_i} \longrightarrow 0.
\end{align*}
Note that $ \operatorname{Im}(\phi_1)$ is a quotient of $\Omega^{1}_{\Xb|D_i}$ which is in particular nef by Proposition \ref{NefBoundry}. Therefore, $ \operatorname{Im}(\phi_1)$ is nef. 

(i) Since $F_{0|D_i}$ admits a trivial quotient, it is not ample. However, it is an extension of a nef bundle by a nef bundle; therefore it is nef. 

(ii) Since $F_{0|D_i}$ is nef and $- r D_{i|D_{i}}$ is ample for $r>0,$ it follows from Lemma \ref{ampletwist} that $F_r$ is ample when $r>0.$ 
 
\end{proof}

Another ingredient we need in order to construct a singular hermitian metric is an appropriate weight function. Roughly speaking, this weight function will be a plurisubharmonic function, supported on a horoball with the largest possible Lelong number on the boundary. The desired wight function on the hoorball around the cusp $c_i$ will be constructed using the following Lemma: 

\begin{Lemma} \label{Weight}
For a positive real number $u_i$ and sufficiently small $\epsilon>0$, there exists a \\ $C^2$ function $\rho_{i}:\mathbb{R}_{>0}\to \mathbb{R}$ and a constant number $c$ satisfying the following properties:
\begin{enumerate}
    \item $\rho_{i}(u)= -\log(u)+c$ on $(0,u_i+\epsilon']$, where $\epsilon'$ is a  positive number depending on $\epsilon$.
    
    \item $i\del \delb \rho_{i}\ge 0$
    \end{enumerate} 
    
    Let $u=-\frac{t_i}{2\pi}\cdot \log|q|-|\zeta|^2$, where $t_i\in \mathbb{R}^{+}$, and $(\zeta, q)\in \C^{n-1} \times \C$.
    \begin{enumerate}[resume]
    \item When $(\zeta,q)$ varies on a compact set with non-empty intersection with $q=0$, 
    $$\dis \lim_{q\to 0 }\frac{ \rho_{i}(u)}{\log|q|}=\frac{1}{2\pi}(d_i-\epsilon),$$
    where $d_i=\frac{t_i}{u_i}.$  
    
    \item The forms $\delb\bi(\rho(u)- \frac{1}{2\pi}(d_i-\epsilon)\log|q|\bi)$ and $\del\delb\bi(\rho(u)\bi)$ are locally integrable on the set $\{(\zeta,q)\in \C^{n-1}\times \C\ | \ u> u_0+\epsilon'\}$.
\end{enumerate}
\end{Lemma}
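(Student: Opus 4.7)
The idea is to build $\rho_i$ as a smooth convex function on $\mathbb{R}_{>0}$ that coincides with $-\log u + c$ on an initial interval and grows asymptotically like $\alpha u$ with slope $\alpha := (d_i-\epsilon)/t_i$. This slope is dictated by item (3): writing $-\log|q| = (2\pi/t_i)(u+|\zeta|^2)$, as $q\to 0$ with $|\zeta|$ bounded one has $u/(-\log|q|) \to t_i/(2\pi)$, so the ratio $\rho_i(u)/(-\log|q|)$ attains the prescribed limit $(d_i-\epsilon)/(2\pi)$ precisely when $\rho_i(u)/u \to \alpha$. Note $\alpha>0$ as soon as $\epsilon<d_i$, which is exactly what ``sufficiently small $\epsilon$'' buys us.

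First I fix a small $\epsilon'>0$ and choose $\epsilon''>\epsilon'$ large enough, and define
\[
\rho_i(u) = \begin{cases} -\log u + c & \text{on } (0,\,u_i+\epsilon'], \\ \alpha u + \beta & \text{on } [u_i+\epsilon'',\,\infty), \end{cases}
\]
interpolating on the transition interval $(u_i+\epsilon',\,u_i+\epsilon'')$ by prescribing $\rho_i''$ to be a nonnegative smooth bump function whose total integral equals $\alpha+1/(u_i+\epsilon')$. The resulting $\rho_i'$ rises monotonically from $-1/(u_i+\epsilon')$ at the left endpoint to $\alpha$ at the right endpoint; a further integration produces a $C^2$ function $\rho_i$ with $\rho_i''\ge 0$ throughout. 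Because $-1/(u_i+\epsilon') < 0 < \alpha$, this is always achievable by enlarging $\epsilon''$ if necessary. The constant $\beta$ is then fixed by $C^0$ matching at $u_i+\epsilon''$, and $c$ may be chosen freely.

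Items (1)--(4) then verify as follows. Item (1) is immediate. Item (2) follows from $\rho_i''\ge 0$: on $(0,u_i+\epsilon']$ one has $i\del\delb\,\rho_i(u) = \tfrac12\,\omega_{\Sn} \ge 0$, and in general, viewing $\rho_i$ as a function of $u=\operatorname{Im}(z)$ on $\mathbb{C}$, convexity gives $i\del\delb\,\rho_i = (\rho_i''(u)/4)\, i\, dz\wedge d\bar z \ge 0$. For item (3), $(\alpha u+\beta)/(-\log|q|) \to \alpha t_i/(2\pi) = (d_i-\epsilon)/(2\pi)$ as $q\to 0$ with $\zeta$ bounded. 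For item (4), substituting $u=-(t_i/2\pi)\log|q|-|\zeta|^2$ on the linear region and using $\alpha t_i = d_i-\epsilon$ yields
\[
\rho_i(u) - \tfrac{d_i-\epsilon}{2\pi}\log|q| \;=\; -\tfrac{d_i-\epsilon}{\pi}\log|q| \;-\; \alpha|\zeta|^2 \;+\; \beta,
\]
whose $\delb$ is $-\tfrac{d_i-\epsilon}{2\pi}\cdot d\bar q/\bar q - \alpha\sum_j \zeta_j\, d\bar{\zeta}_j$, locally integrable because $d\bar q/\bar q$ is; meanwhile $\del\delb\,\rho_i(u) = -\alpha\sum d\zeta_j\wedge d\bar{\zeta}_j$ is smooth on $\{q\ne 0\}$. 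The only delicate step is arranging the convex interpolation to be $C^2$ and to match the two pieces on the nose; this is standard and freely adjustable via the choice of $\epsilon''$, which is why $\epsilon'$ (and hence $\epsilon''$) depends on $\epsilon$.
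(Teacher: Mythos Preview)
There is a genuine gap in your verification of item~(2). You write ``viewing $\rho_i$ as a function of $u=\operatorname{Im}(z)$ on $\mathbb{C}$,'' but on the Siegel domain the height is $u=\operatorname{Im}(z)-|\zeta|^{2}$ (equivalently $u=-\tfrac{t_i}{2\pi}\log|q|-|\zeta|^{2}$ in the $(\zeta,q)$ chart). With this $u$ one has $\partial\bar\partial\,u=-\sum_{j} d\zeta_j\wedge d\bar\zeta_j$, and hence
\[
i\,\partial\bar\partial\,\rho_i(u)\;=\;\rho_i''(u)\,\bigl(i\,\partial u\wedge\bar\partial u\bigr)\;+\;\rho_i'(u)\,\bigl(i\,\partial\bar\partial u\bigr)
\;=\;\rho_i''(u)\,\bigl(i\,\partial u\wedge\bar\partial u\bigr)\;-\;\rho_i'(u)\,i\sum_{j} d\zeta_j\wedge d\bar\zeta_j .
\]
The first bracket is semipositive, but the second is semipositive only when $\rho_i'(u)\le 0$. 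So plurisubharmonicity of $\rho_i(u)$ requires \emph{both} $\rho_i''\ge 0$ and $\rho_i'\le 0$; convexity alone is not enough once $n>1$. Your linear tail has slope $\alpha=(d_i-\epsilon)/t_i>0$, so on that region
\[
i\,\partial\bar\partial\,\rho_i(u)\;=\;-\alpha\,i\sum_{j} d\zeta_j\wedge d\bar\zeta_j\;<\;0,
\]
and item~(2) fails outright.

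This is exactly why the paper takes the tail with \emph{negative} slope, writing $\rho_i(u)=-\dfrac{u}{u_i+2\epsilon'}+c_1 e^{-a(u-u_i-\epsilon')}$: then $\rho_i'<0$ and $\rho_i''>0$ throughout, or equivalently $\rho_i$ is an increasing convex function of the plurisubharmonic function $-u$, which is what forces $i\,\partial\bar\partial\rho_i(u)\ge 0$. (The displayed limit in item~(3) of the lemma carries a sign slip that may have steered you toward a positive slope; the paper's own function satisfies $\rho_i(u)/(-\log|q|)\to -\tfrac{1}{2\pi}(d_i-\epsilon)$, and it is this sign that is actually used when the weight $e^{-\rho_i(u)}$ is paired with the section $q^{a}$ in the construction of the singular metric.) Your interpolation idea is fine once you flip the sign of the asymptotic slope to $-\alpha$ and take $\epsilon'$ small enough that $-1/(u_i+\epsilon')<-\alpha$, so that a nonnegative $\rho_i''$ can bridge the two derivative values; the paper achieves the same effect with an explicit exponential correction rather than a bump.
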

\begin{proof} 
Let $\epsilon>0$ be small enough such that there exists $\epsilon'\in(0,\frac{t_i}{4\pi})$ satisfying
$\frac{t_i}{u_i}-\frac{t_i}{u_i+2\epsilon'}=\epsilon$  and define $\rho_{i}:\mathbb{R}_{>0}\to \mathbb{R}$ be the function given by
\begin{equation*}
\rho_{i}(u) = \left\{
        \begin{array}{ll}
            -\log(u)+c_0 & \quad 0<u< u_i+\epsilon' ,\\
            \\
            \dfrac{-u}{u_i+2\epsilon'}+c_1e^{-a(u-u_i-\epsilon')} & \quad u \ge u_i+\epsilon' ,\\
        
        \end{array}
    \right.
\end{equation*}
where 
$a=\frac{1}{\epsilon'}+\frac{1}{u_i+\epsilon'},
c_1=\bi(\frac{\epsilon'
}{u_i+2\epsilon'})^2,$
and 
$c_0= -\log(u_i+\epsilon')-c_1-\dfrac{u_i+\epsilon'}{u_i+2\epsilon'}$. Since $a\cdot c_1=\frac{\epsilon'}{(u_i+\epsilon')(u_i+2\epsilon')},$ and 
$a^2c_1=\frac{1}{(u_i+\epsilon')^2}$
its straightforward to see that the function $\rho_i$ is $C^2$.

As $-u$ is a plurisubharmonic function, and $c_1$ and $a$ are positive,  $-\log(u)+c_0$, $\frac{-u}{u_i+2\epsilon'}+c_1e^{-(u-u_i-\epsilon')}$ are plurisubharmonic because the function $-\log(-x)$ when $x<0$ and the function $e^{bx}$  for every $b>0$ are monotonically increasing and convex. Hence, $\rho_i(u)$ satisfies the second properties.

$\rho_i(u)$ satisfies the third condition since
\begin{align*}
\lim_{ q\to 0}\dfrac{ \rho_{i}(u)}{\log|q|
}= \frac{1}{2\pi} \frac{t_i}{u_i+2\epsilon'}
=\frac{1}{2\pi} (\frac{t_i}{u_i}-\epsilon)
=\frac{1}{2\pi} (d_i-\epsilon).
\end{align*}

To check the forth condition note that 
\begin{align*}
e^{-a(u-u_i-\epsilon')}=c'\cdot e^{-a|\zeta|^2}\cdot |q|^{l},    
\end{align*}
where $l=\dfrac{a\cdot t_i}{2\pi}$ and $c'$ is a constant. The inequality $a> \frac{1}{\epsilon'}> \frac{4\pi}{t_i}$ gives that $l>2$, the function $|q|^l$, and the forms $\delb(|q|^{l}), \del(|q|^{l})$ and $\del\delb(|q|^{l})$ are locally integrable and therefore letting $c''= \frac{1}{u_0+2\epsilon'}$,

$$\del\delb\bi(\rho_i(u)\bi)=\del\delb(c'' \cdot|\zeta|^2)+\del\delb(c'e^{-a|\zeta|^2}|q|^l)$$
is a locally integrable form. 
On the other hand, $\frac{-u}{u_i+2\epsilon'}= (d_i-\epsilon)\log|q|+\frac{|\zeta|^2}{u_i+2\epsilon'},$ in other words, $$\delb\B(\frac{-u}{u_i+2\epsilon'}-(d-\epsilon)\log|q|\B)= \delb(c''\cdot |\zeta|^2),$$ which is a locally integrable form. Hence, 
$\delb\bi(\rho(u)- (d_i-\epsilon)\log|q|\bi)$ is locally integrable on $\{(\zeta,q)\in \C^{n-1}\times \C\ | \ u> u_0+\epsilon'\}$ as desired. 
\end{proof}

The main difficulty to prove Theorem \ref{SlopLog} is to prove the following proposition. In the proof, we construct a singular hermitian metric by Preposition \ref{Current} and Lemma \ref{Weight}:  

\begin{Proposition} \label{Positve intersection}
For a sufficiently small $\epsilon>0,$ with $r= (d-\epsilon)/4\pi$ being rational, the $\Q$-vector bundle
$$\Cot\bi(\log(D)\bi) \la -r D \ra$$
is ample.
\end{Proposition}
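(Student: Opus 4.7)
The plan is to construct a singular hermitian metric $h_{\mathrm{sing}}$ on $L := O_{\Proj(\lgcot)}(1)$ whose curvature current dominates $r[\pi^*D] + \tfrac{1}{2}\pi^*\omega_X$ as currents on $\Proj(\lgcot)$. Twisting by $-r\pi^*D$ then produces a singular metric on $L - r\pi^*D$ with a K\"ahler current on $\pi^{-1}(X)$, which combined with the ampleness of its restriction to the boundary (supplied by Proposition \ref{AmpleLog}(ii)) will give ampleness of $\lgcot\la -rD\ra$ via Nakai--Moishezon.

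First I would take the Bergman metric $\hat h$ on $L$. By Proposition \ref{Good metric} this is a good metric in the sense of Mumford, so $c_1(L, \hat h)$ represents $c_1(L)$ as a current on $\Proj(\lgcot)$, and on the open part Proposition \ref{Positive form} gives the K\"ahler lower bound $c_1(L,\hat h)\geq \tfrac{1}{2}\pi^*\omega_X$. Next I would build a plurisubharmonic weight $\phi$ by applying Lemma \ref{Weight} cusp by cusp with $u_i := v_i = t_i/d$: this produces, for each cusp $c_i$, a $C^2$ psh function $\rho_i(u)$ depending only on the height $u$, which therefore descends from the Siegel domain to the horoball $H_i(v_i)\subset X$. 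By the defining property of the uniform depth $d$, the $H_i(v_i)$ embed and are pairwise disjoint, so the sum $\phi := \sum_i \rho_i(u_i)$ (extended as a constant outside the horoballs, via a harmless cutoff preserving plurisubharmonicity) defines a psh function on $X$ whose growth along each $D_i$ has the rate $r = (d-\epsilon)/2\pi$ by Lemma \ref{Weight}(iii). Set $h_{\mathrm{sing}} := \hat h \cdot e^{-\phi}$.

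The core curvature computation proceeds as follows. On the open part, $c_1(L, h_{\mathrm{sing}}) = c_1(L,\hat h) + i\del\delb \phi \geq \tfrac{1}{2}\pi^*\omega_X$ since $\phi$ is psh by Lemma \ref{Weight}(ii). Across the boundary $\pi^{-1}(D)$, the distributional current $i\del\delb \phi$ contains a Lelong-type contribution in addition to the pointwise form; applying Proposition \ref{Current} to $\Phi := e^{-\phi}$, whose integrability hypotheses are furnished by Lemma \ref{Weight}(iv), shows that $i\del\delb \phi \geq r[\pi^*D]$ as currents. Consequently
$$c_1(L, h_{\mathrm{sing}}) \;\geq\; \tfrac{1}{2}\pi^*\omega_X \;+\; r[\pi^*D]$$
globally on $\Proj(\lgcot)$, so the induced singular metric on $L - r\pi^*D$ has curvature current bounded below by $\tfrac{1}{2}\pi^*\omega_X$, which by Proposition \ref{Positive form}(i) is a genuine K\"ahler form on $\pi^{-1}(X)$.

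To conclude ampleness of $L - r\pi^*D$ I would apply Nakai--Moishezon. For a subvariety $V\subset \pi^{-1}(D)$, positivity of $(L - r\pi^*D)^{\dim V}\cdot V$ follows from Proposition \ref{AmpleLog}(ii), since $(L - r\pi^*D)|_{\pi^{-1}(D)}$ is the tautological bundle on $\Proj\!\bigl(\lgcot|_D\la -rD|_D\ra\bigr)$ and is ample. For a subvariety $V$ meeting $\pi^{-1}(X)$, integrating the smooth lower bound $\tfrac{1}{2}\pi^*\omega_X$ over $V\cap \pi^{-1}(X)$ already gives a strictly positive contribution, so the top intersection is positive. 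The hardest step is the curvature bookkeeping near $D$: one must verify the integrability hypotheses of Proposition \ref{Current} for this particular $\phi$, handle the patching of cusp-local weights $\rho_i$ into a globally psh function (the standard trick is to subtract off appropriate constants and use a max with zero outside enlarged horoballs), and confirm that the Lelong contribution along $D_i$ is exactly $r$ rather than strictly smaller --- which is why one chooses $u_i = v_i = t_i/d$ using the \emph{uniform} depth.
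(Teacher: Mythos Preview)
Your outline follows the paper's strategy, but two steps do not go through as written.

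\textbf{The weight does not glue.} The function $\rho_i(u)$ from Lemma \ref{Weight} is strictly decreasing in $u$ and equals $-\log u + c$ on the collar $u_i < u \le u_i+\epsilon'$; it is \emph{not} constant there. Consequently your proposed ``max with a constant'' either kills the weight on the whole horoball (if the constant is $\ge\rho_i(u_i)$) or kills it precisely on the deep part near $D$ where you need it (if the constant is smaller), and since the height $u$ is only $\Gamma_i'$-invariant and not $\Gamma$-invariant, there is no global psh $\phi$ on $X$ equal to $\rho_i$ near each $D_i$ and constant outside the horoballs. The paper's fix is to take the weight $\rho_i(u)+\log u + c$ instead (equivalently, to insert an extra factor $e^{-b\log u}$ in the metric): by Lemma \ref{Weight}(1) this combination \emph{is} constant on the collar and therefore glues to $\hat h^{\,b}$ outside. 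The price is that $i\del\delb\log u=-\tfrac12\omega_X$, so the added weight is no longer psh by itself; the loss is exactly compensated by the surplus $c_1(L,\hat h)\ge\tfrac12\pi^*\omega_X$ from Proposition \ref{Positive form}(ii). This trade-off between the positivity of $\hat h$ and the weight --- not a globally psh $\phi$ --- is the mechanism that makes the construction work.

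\textbf{Strict positivity and the use of Proposition \ref{Current}.} The form $\tfrac12\pi^*\omega_X$ is not K\"ahler on $\pi^{-1}(X)$: as a pull-back under the submersion $\pi$ it vanishes along the fibres, so your lower bound yields $0$ for any $V$ contained in a fibre over a point of $X$. Proposition \ref{Positive form}(i) asserts that $c_1(L,\hat h)$ itself is K\"ahler on $\Proj(\Omega^1_X)$, and it is this form that the paper uses to get strict positivity in the Nakai--Moishezon step for $V\not\subset\tildeb{D}$. Relatedly, Proposition \ref{Current} cannot be applied to $\Phi=e^{-\phi}$: its hypothesis (ii) requires $\log\Phi/\log|q|\to 0$, whereas here the limit is $r\neq 0$. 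That proposition is designed to show a current has \emph{no} extra Lelong contribution (current equals form); the paper applies it to $\Phi_1=\exp(\rho_i - r\log|q_i|)$ \emph{after} absorbing the twist by $-rD$ into the metric on $O(-a\tildeb{D})$, and Lemma \ref{Weight}(3) is exactly the statement that the limit then vanishes.
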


\begin{proof}
Denote $\Cot\bi(\log(D) \bi)$ by $F$.  
Suppose $\tildeb{D}$ is the pullback of $D$ by the natural projection 
$$\pi: \Proj(F)\longrightarrow \Xb.$$ 

The goal is to show that the $\Q$-line bundle $O_{\Proj(F)}(1)-r \tildeb{D}$ is ample, i.e., it intersects positively with every subvariety of $\Proj(F)$ . Since $-r=-(d-\epsilon)<0$ for a sufficiently small $\epsilon>0$, by Proposition \ref{AmpleLog}, it is enough to show that $O_{\Proj(F)}(1)-r \tildeb{D}$ intersects positively with every subvariety of $\Proj(F)$ that is not contained in $\tildeb{D}$ but possibly intersects with $\tildeb{D}$. Let $V$ be such a subvariety.

Let $a$ and $b$ be positive integer such that $a/b= r$. We show that the line bundle $L=O_{\Proj(F)}(b) \otimes O_{\Proj(F)}(- a \tildeb{D})$ intersects positively with $V$ by constructing a singular hermitian metric $\bar{h}$ on $L$ such that $c_1(L, \bar{h})^{dim(V)} \cdot V > 0.$ 

Suppose $D_i$ is the component of $D$ compactifing the cusp $c_i$ and $D_i$ is given by $q_{i}=0$ on the horoball $H_{i}(u_i),$ where $u_i=t_i/d.$ Let $\tildeb{D}_i=\pi^{*}(D_i)$ and $\tildeb{H}_{i}(u_i)=\pi^{-1}\bi(H_{i}(u_i)\bi).$ By Shimuzu's lemma the horoballs are disjoint and therefore $L_{|\tildeb{H}_i(u_i)}$ naturally isomorphic to $O_{\Proj(F)}(b)\otimes O_{\Proj(F)}(- a \tildeb{D}_{i}).$

 Substituting $u_i$ and $\epsilon$ to Lemma \ref{Weight}, we obtain a function $\rho_i(u)$ and a constant $c$ satisfying the properties in Lemma \ref{Weight}. Now, we define a singular hermitian metric 
$$\bar{h}(s^b \otimes \beta):= \exp\B(b \cdot \bi (-\rho_{i}(u)-\log(u)-c\bi )\B ) \hat{h}(s)^b$$
on $O_{\Proj(F)}(b)\otimes O_{\Proj(F)}(- a \tildeb{D}_{i})$, where $s^b$ is a section of $O_{\Proj(F)}(b)$ and $\beta$ is the canonical rational section of $-a \tildeb{D}_{i}$ corresponding to $1$. As horoballs are disjoint, Lemma \ref{Weight} implies that $\bar{h}$ is a well-defined metric on $L$ and on the complements of horoballs it equals to $\hat{h}^{b}$. Evaluating  $\bar{h}$ at a local generator of $L$ on $\tildeb{
H}_i(u_i)$ gives

$$\bar{h}(s^b \otimes \beta q_i^{a})=\exp\Big( b \cdot \bi(-\rho_{i}(u)+2r \log|q_i|-\log(u)-c \bi) \Big)\hat{h}(s).$$

Now, we can compute the curvature current on the horoball $\tildeb{H}_i(u_i)$:

\begin{align}
\bi[c_{1}(L, \bar{h}) \bi]
=\frac{i}{2\pi}\Big( 
&\bi[\del\delb\bi(\rho_i(u)-2r \log|q_i|\bi)\bi] \nonumber \\
&+
[\del\delb\log(u)]
-
[\del\delb\log(\hat{h}(s))]\Big) \nonumber
\end{align}
By Proposition \ref{Good metric}, the current
$[\del\delb \log(\hat{h}(s))]$ is represented by the form $\del\delb\log(\hat{h}(s))$. Applying Proposition $\ref{Current}$ to $\Phi_1(\zeta,|q_i|)=\exp\bi(\rho_i(u)-2 r \log|q_i|\bi)$ and 
$\Phi_2(\zeta,|q_i|)=\exp\bi(-\log(u)\bi)$ gives that 
$$\bi[\del\delb\bi(\rho_i(u)-2r \log|q_i|)\bi)\bi] +[\del\delb\log(u)]$$
is represented by the form $\del\delb\bi(\rho_i(u)-2r  \log|q_i|\bi) +\del\delb\log(u)$. 
By Proposition \ref{Positive form}, we observe that $\del\delb\log(u)
-
\del\delb\log(\hat{h}(s))$ is a semi-positive form. On the other hand, Lemma \ref{Weight} 
gives that the form 
$\del\delb\bi(\rho_i(u)-2r\log|q_i|\bi)= i\del\delb \rho_i(u)$
is semi-positive. Putting these together, we obtain that $[c_{1}(L, \bar{h})]$ is represented by a semi-positive form on $\tildeb{H}_i(u_i)$. 

Note that by Lemma \ref{Weight} we killed the Lelong number of the current $\bi[\del\delb\bi(\rho_i(u)-2 r\log|q_i|\bi)\bi]$ on $q_i=0,$ and the currents $[\del\delb\log(u)]$ and $[\del\delb \log(\hat{h}(s))]$ also have $0$ Lelong number on $q_i=0$ (see the last part of the proof of Proposition \ref{Good metric}). Therefore, we can apply \cite[Corollary 7.6.]{demailly1992regularization} and obtain that 
$$
\dis\int_{V}\bi[c_{1}(L, \bar{h}) \bi]^{\dim(V)} \ge \int_{V_{|\pi^{-1}(X)}}\bi(\del\delb (\rho_i(u)+\log(u)-\log(\hat{h}(s)) \bi)^{\dim(V)}\ge 0 . 
$$

As $V\cap X\neq \varnothing $, by Proposition \ref{Positive form}, $V$ has a positive intersection with $ L_{|\pi^{-1}(X)}.$ Since $\bar{h}$ does not depend on $V,$ we can conclude that every subvariety $V\subset \Proj(E)$ that is not entirely contained in $\tildeb{D}$ intersects positively with $L$.
\end{proof}

Using Proposition \ref{AmpleLog} together with Proposition \ref{Positve intersection}, we can prove Theorem \ref{SlopLog}:
\begin{theorem}(Theorem \ref{SlopLog})  \label{PositveLog} For every rational $r \in (0,d/4\pi )$, the $\Q$-vector bundle 
$$\lgcot \la - r  D \ra $$
is ample.
\end{theorem}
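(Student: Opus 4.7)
The plan is to promote Proposition \ref{Positve intersection}, which establishes ampleness of $\lgcot\la -r_1 D\ra$ only for a single rational $r_1$ very close to $d/2\pi$, to the full rational interval $(0,d/2\pi)$. The mechanism is to combine ``ampleness modulo $D$'' on all of $\Xb$ with the genuine ampleness of $\lgcot\la -rD\ra$ restricted to $D$, and then invoke Lemma \ref{ModDonD}.

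Fix a rational $r\in(0,d/2\pi)$. I would first pick a rational $r_1$ with $r<r_1<d/2\pi$, chosen close enough to $d/2\pi$ so that $\epsilon:=d-2\pi r_1$ is small enough for Proposition \ref{Positve intersection} to apply; such an $r_1$ exists by density of $\Q$. Proposition \ref{Positve intersection} then gives that $\lgcot\la -r_1 D\ra$ is ample, and in particular ample modulo $D$. Applying Lemma \ref{smallertwist} with $s=r<r_1$, I would conclude that $\lgcot\la -rD\ra$ is ample modulo $D$.

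On the other hand, Proposition \ref{AmpleLog}(ii) gives that the restriction $\lgcot_{|D}\la -r D_{|D}\ra$ is ample. Writing $r=a/b$ with $b>0$ and invoking the equivalent definitions of ampleness and of ampleness modulo $D$ for $\Q$-vector bundles in terms of the honest vector bundle $S^b(\lgcot)\otimes O_{\Xb}(-aD)$, the two inputs just established translate into exactly the hypotheses of Lemma \ref{ModDonD}. That lemma then delivers the ampleness of $S^b(\lgcot)\otimes O_{\Xb}(-aD)$, which by Definition \ref{PosTwist}(iii) is precisely the ampleness of the $\Q$-vector bundle $\lgcot\la -rD\ra$.

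All of the genuine content has already been placed in Proposition \ref{Positve intersection}, whose proof is the differential-geometric heart of the paper (the singular Hermitian metric built from the Bergman metric together with the plurisubharmonic weight supplied by Lemma \ref{Weight}). The passage from $r_1$ close to $d/2\pi$ down to arbitrary $r\in(0,d/2\pi)$ is purely formal, and I do not anticipate any real obstacle beyond the bookkeeping of clearing denominators when translating between $\Q$-vector bundles and honest vector bundles.
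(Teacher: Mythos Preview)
Your proposal is correct and follows essentially the same route as the paper's own proof: the paper too fixes $r=a/b\in(0,d/2\pi)$, chooses $\epsilon>0$ with $r<(d-\epsilon)/2\pi$, combines Proposition~\ref{Positve intersection} (ampleness at $r_1=(d-\epsilon)/2\pi$) with Lemma~\ref{smallertwist} to get ampleness modulo $D$ at $r$, then uses Proposition~\ref{AmpleLog} for ampleness on $D$ and concludes via Lemma~\ref{ModDonD}. Your write-up is slightly more explicit about the ``ample $\Rightarrow$ ample modulo $D$'' step and the denominator-clearing, but the argument is the same.
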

\begin{proof} Fix $r=a/b \in (0,d/4\pi)$, and choose $\epsilon>0$ such that $r <(d-\epsilon)/4\pi$. Putting Proposition \ref{Positve intersection} and Lemma \ref{smallertwist} together, we get that  
$\lgcot \la -r  D \ra $
is ample module $D,$ i.e. the vector bundle $S^b \lgcot \otimes O_{\Xb}( -a D )$ is ample modulo $D.$ Moreover, it follows from Proposition \ref{AmpleLog} that the restricted $\Q$-vector bundle
$\lgcot_{|D} \la -r  D _{|D} \ra $
is ample and therefore by Lemma \ref{ModDonD} we can conclude that the vector bundle $S^b \lgcot \otimes O_{\Xb}( -a D )$ is ample. 

\end{proof}
Theorem \ref{PositveLog} implies that $\lgcot$ is a limit of ample $\Q$-vector bundles.
Moreover, we show in Corollary \ref{neflog} that $\lgcot$ is ample modulo $D$ and nef. It is previously proved by Cador\'el that $\lgcot$ is big and nef (\cite[Theorem 3]{Cad}).

\begin{Corollary}(Corollary \ref{logPositivity}) \label{neflog}
The logarithmic cotangent bundle $\lgcot$ is ample modulo $D$ and nef. 
\end{Corollary}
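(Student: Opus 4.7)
The plan is to deduce both assertions directly from Theorem \ref{PositveLog} together with the formal machinery of $\Q$-vector bundles assembled in \textsection \ref{Sec4}; no new geometric input is required.

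For the \emph{ampleness modulo $D$} part, I would fix any rational $r \in (0, d/2\pi)$. By Theorem \ref{PositveLog}, the $\Q$-vector bundle $\lgcot \la -rD \ra$ is ample on $\Xb$. Ampleness is strictly stronger than ampleness modulo $D$: a high enough symmetric power tensored with an arbitrary coherent sheaf is globally generated at every point of $\Xb$, so in particular at every point of $\Xb \setminus D$. Thus $\lgcot \la -rD \ra$ is ample modulo $D$. Then I would apply Lemma \ref{smallertwist} with $s = 0 < r$, which gives that $\lgcot \la 0 \cdot D \ra = \lgcot$ is itself ample modulo $D$.

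For the \emph{nefness} part, I would pass to the projective bundle $\pi \colon \Proj(\lgcot) \to \Xb$, which is smooth projective since $\lgcot$ is locally free on the smooth variety $\Xb$. By Theorem \ref{PositveLog} and the equivalence in Definition \ref{PosTwist}(i), the $\Q$-Cartier class
$$O_{\Proj(\lgcot)}(1) - r\,\pi^{*}D$$
is ample for every rational $r \in (0, d/2\pi)$. Letting $r \to 0^{+}$ in $N^{1}\bigl(\Proj(\lgcot)\bigr)_{\R}$ exhibits $O_{\Proj(\lgcot)}(1)$ as a limit of ample classes. By Kleiman's theorem the nef cone is the closure of the ample cone, so $O_{\Proj(\lgcot)}(1)$ is nef, which is the definition of $\lgcot$ being nef.

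No substantial obstacle remains once Theorem \ref{PositveLog} is available; the proof is purely formal. The only point worth flagging explicitly is that Lemma \ref{smallertwist} is being applied at the boundary value $s = 0$, which is permitted since the hypothesis only requires $s < r$ and rational, and that under this substitution the formal symbol $E \la -sD \ra$ is $\Q$-isomorphic to $E$ itself.
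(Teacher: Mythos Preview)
Your proof is correct and follows essentially the same route as the paper. For ampleness modulo $D$ the paper invokes Proposition~\ref{Positve intersection} together with Lemma~\ref{smallertwist} (at $s=0$), exactly as you do with the slightly stronger Theorem~\ref{PositveLog}; for nefness the paper gives the direct curve-intersection version of your Kleiman argument (if some curve $C$ had $C\cdot O_{\Proj(F)}(1)<0$, choose $r$ small enough that $C\cdot(O_{\Proj(F)}(1)-r\,\pi^*D)<0$, contradicting ampleness), which is the same ``limit of ample is nef'' idea phrased pointwise.
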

\begin{proof}
Denote $\lgcot$ by $F$. Putting Proposition \ref{Positve intersection} and Lemma \ref{smallertwist} together, we get that  
$\lgcot$ is ample module $D.$

It is easy to conclude from Theorem \ref{PositveLog} that $F$ is nef. If $F$ were not nef, then there would be a curve $C\subset \Proj(F)$ such that the intersection
$I:=C\cdot O_{\Proj(F)}(1)$
is negative. Let $\tildeb{D}=\pi
^{*}D.$ Choose a small enough $r >0$ such that $I- r (C\cdot \tildeb{D})<0$  and therefore 
$$C\cdot \bi(O_{\Proj(F)}(1)-r \tildeb{D}\bi)<0,$$
which contradicts the ampleness of $F \la -r D \ra$.

\end{proof}

\section{Positivity of $\Cot$} \label{sec5}
Throughout this section, we denote a complex hyperbolic manifold with cusps by $X$, and its smooth toroidal compactification by $\Xb$.  Additionally, we assume that $\Xb$ does not have any orbifold point.  

The goal of this section is to prove the results on the positivity of the cotangent bundle, namely Theorem \ref{SemiCot} and Theorem \ref{CotPositivity}. Further, we conclude that if the canonical depth of cusps is sufficiently large, then the symmetric differentials on $\Xb$ is finitely generated $\C$-algebra.   

To pass from positivity on the log-cotangent bundle in the previous section to positivity of the cotangent bundle in this section we consider the following exact sequence of coherent sheaves over $\Xb$: 

\begin{align} \label{exactSeq}
    0 \longrightarrow \lgcot\otimes O_{\Xb}(-D)	\overset{\phi}{\longrightarrow} \Omega_{\Xb}^{1} \longrightarrow i_{*}\Omega^{1}_{D} \to 0,
\end{align} 
where $\phi$ sends $\dis \bi(\sum_{i=1}^{n} f_i d\zeta_i+ g \frac{dq}{q}\bi) \otimes q $ to $\dis \sum_{i=1}^{n} q f_i d\zeta_i+ g dq$ on an open set $U$ on which the local coordinate of $D$ is given by $q=0$.

\begin{theorem} \label{SlopeCot} (Theorem \ref{CotPositivity})
 Suppose that the uniform depth of cusps $d$ is greater than $4\pi$. Then, $$\Omega^{1}_{\Xb}\la - r D \ra$$ is ample for all rational $r \in (0,-1+d/4\pi).$

\end{theorem}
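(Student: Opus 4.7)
The plan is to deduce Theorem \ref{SlopeCot} from Theorem \ref{SlopLog} by exploiting the exact sequence \eqref{exactSeq}. The natural inclusion of sheaves $\lgcot\otimes O_{\Xb}(-D) \hookrightarrow \Omega^1_{\Xb}$, after a rational twist by $-rD$, induces a $\Q$-subbundle inclusion
$$\lgcot\la -(r+1)D\ra \;\hookrightarrow\; \Omega^1_{\Xb}\la -rD\ra$$
that is an isomorphism over the open dense set $X$. For $r\in (0,-1+d/2\pi)$ one has $r+1\in (1,d/2\pi)\subset (0,d/2\pi)$, so Theorem \ref{SlopLog} applies and the left-hand side is ample.

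Given this ample $\Q$-subbundle, I would apply Lemma \ref{PositiveSub} to conclude that $\Omega^1_{\Xb}\la -rD\ra$ is ample modulo $D$. To upgrade this to genuine ampleness, I need to check ampleness on $D$: the restriction of $\Omega^1_{\Xb}\la -rD\ra$ to $D$ is exactly the $\Q$-vector bundle $\Omega^1_{\Xb|D}\la -rD_{|D}\ra$ studied in Proposition \ref{NefBoundry}, and part (ii) of that proposition says it is ample for every $r>0$. Writing $r=a/b$ and applying Lemma \ref{ModDonD} to the honest vector bundle $S^b\Omega^1_{\Xb}\otimes O_{\Xb}(-aD)$ will then finish the argument.

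The principal technical point to verify is that the natural map $\lgcot\otimes O_{\Xb}(-D)\to \Omega^1_{\Xb}$ remains injective after taking $b$-th symmetric powers and tensoring with $O_{\Xb}(-aD)$, so that the hypothesis of Lemma \ref{PositiveSub} is in fact satisfied. Strictly speaking, the inclusion is not a subbundle map on all of $\Xb$, since its cokernel is the torsion sheaf $i_{*}\Omega^1_D$ supported on $D$; however, it is an isomorphism over $X$, and since symmetric powers of locally free sheaves are locally free and hence torsion-free, a generic isomorphism between them remains a global injection. This is precisely the sense in which $\Q$-subbundle is used in the proof of Lemma \ref{PositiveSub}. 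Once this observation is in place, the rest of the argument is a straightforward assembly of the three inputs (Theorem \ref{SlopLog}, Proposition \ref{NefBoundry}(ii), and Lemma \ref{ModDonD}).
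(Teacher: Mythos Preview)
Your proposal is correct and follows essentially the same route as the paper: use the inclusion from \eqref{exactSeq} together with Theorem \ref{SlopLog} and Lemma \ref{PositiveSub} to get ampleness modulo $D$, then invoke Proposition \ref{NefBoundry}(ii) and Lemma \ref{ModDonD} to conclude. Your added remark that the inclusion $\lgcot\otimes O_{\Xb}(-D)\hookrightarrow\Omega^1_{\Xb}$ is only a subsheaf (not a subbundle) along $D$, and that this suffices for Lemma \ref{PositiveSub} because the proof there only uses injectivity of sections and the generic isomorphism, is a worthwhile clarification that the paper leaves implicit.
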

\begin{proof}
Let $r=a/b$ with a positive $b$ be a rational number in $(0, -1+d/4\pi)$. Since $d>4\pi,$ Theorem \ref{PositveLog} gives that the $\Q$-vector bundle $ \lgcot \bi\la (- 1 - r ) D \bi \ra$ is ample. In particular, $ \lgcot \bi\la (- 1 - r ) D \bi \ra$ is ample modulo $D.$ Note that the exact sequence \ref{exactSeq} gives that the vector bundle $\lgcot \otimes O_{\Xb}(-D)$ is a subbundle of $\Omega^{1}_{\Xb}.$ Therefore, the $\Q$-vector bundle $ \lgcot \bi\la (- 1 - r ) D \bi \ra$ is a $\Q$-subsheaf of $\Cot \la -r D \ra.$ It follows from Lemma \ref{PositiveSub} that $\Cot \la -r D \ra$ is ample modulo $D,$ that is, the vector bundle $S^b \Cot \otimes O_{\Xb} (- a D ) $ is ample modulo $D.$ On the other hand, the restricted bundle $\Omega^1_{\Xb|D} \la -r D_{|D} \ra$ is ample thanks to Proposition \ref{NefBoundry}. This means that $\bi ( S^b \Cot \otimes O_{\Xb} (- a D) \bi)_{|D} $ is ample. Hence, it follows from Lemma \ref{ModDonD} that $ \Cot \la - r D \ra $ is ample.

\end{proof}

\begin{theorem} \label{CotPos} (Theorem \ref{SemiCot})
 Suppose that the uniform depth of cusps $d$ is greater than $4\pi$. Then, $\Cot$ is ample modulo $D$ and semi-ample. 

\end{theorem}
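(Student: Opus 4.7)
The plan is to handle the two conclusions separately: ampleness modulo $D$ is a formal consequence of Theorem \ref{SlopeCot}, while semi-ampleness needs a cohomological lifting argument on $\Proj(\Cot)$.

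\textbf{Ample modulo $D$.} Since $d>2\pi$, Theorem \ref{SlopeCot} produces some rational $r>0$ with $\Cot\la -rD\ra$ ample, hence ample modulo $D$. Applying Lemma \ref{smallertwist} with $s=0<r$ then shows that $\Cot=\Cot\la 0\cdot D\ra$ is itself ample modulo $D$.

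\textbf{Semi-ampleness.} The goal is to find $m>0$ such that $S^m\Cot$ is globally generated. Off $D$, ampleness modulo $D$ ensures that for every sufficiently large $m$, global sections of $S^m\Cot$ generate each fibre at points of $\Xb\setminus D$. Along $D$, Proposition \ref{NefBoundry}(i) gives $m_0$ such that $S^{km_0}\Cot_{|D}$ is globally generated on $D$ for every $k\ge 1$. The bridge between these is to \emph{lift} boundary-generating sections to global sections of $S^m\Cot$ via the exact sequence
\[
0\to S^m\Cot\otimes O_{\Xb}(-D)\to S^m\Cot\to S^m\Cot_{|D}\to 0,
\]
which reduces the matter to proving $H^1\bi(\Xb,S^m\Cot\otimes O_{\Xb}(-D)\bi)=0$ for all sufficiently large $m$.

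To obtain this vanishing, pass to $\pi\colon\Proj(\Cot)\to\Xb$. Since $\pi_*O_{\Proj(\Cot)}(m)=S^m\Cot$ and $R^j\pi_*O_{\Proj(\Cot)}(m)=0$ for $j>0$, $m\ge 0$, the Leray spectral sequence identifies this group with $H^1\bi(\Proj(\Cot),O_{\Proj(\Cot)}(m)\otimes\pi^*O_{\Xb}(-D)\bi)$. A relative-Euler computation yields $K_{\Proj(\Cot)}=2\pi^*K_{\Xb}-n\cdot O_{\Proj(\Cot)}(1)$, so a (logarithmic) Kawamata--Viehweg vanishing applied to the klt pair $(\Proj(\Cot),\epsilon\tildeb{D})$, with $\tildeb{D}=\pi^*D$ and small rational $\epsilon>0$, reduces the vanishing to the positivity of
\[
(m+n)\B(O_{\Proj(\Cot)}(1)-\tfrac{1}{m+n}\tildeb{D}\B)-2\pi^*K_{\Xb},
\]
which by Theorem \ref{SlopeCot} is a large positive multiple of an ample $\Q$-divisor (the window $(0,-1+d/2\pi)$ being non-empty precisely because $d>2\pi$) that absorbs the fixed contribution $-2\pi^*K_{\Xb}$ for $m$ large by openness of ampleness. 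Fixing $m=km_0$ with $k$ large enough for both this vanishing and the ample-modulo-$D$ generation off $D$, the surjection $H^0(\Xb,S^m\Cot)\twoheadrightarrow H^0(D,S^m\Cot_{|D})$ lifts the boundary generators and gives global generation of $S^m\Cot$ on all of $\Xb$. The central technical step—where the hybrid algebro-differential flavour of the argument announced in the introduction appears—is the positivity verification of the auxiliary line bundle on $\Proj(\Cot)$, combining the ampleness input from Theorem \ref{SlopeCot} with the algebraic geometry of $K_{\Proj(\Cot)}$.
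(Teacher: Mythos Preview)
Your argument for ``ample modulo $D$'' is fine. The semi-ampleness argument, however, has a genuine gap at the Kawamata--Viehweg step: the $\Q$-divisor
\[
N_m:=(m+n)\,O_{\Proj(\Cot)}(1)-\tildeb D-2\pi^*K_{\Xb}
\]
is \emph{never} nef, so no form of Kodaira/Kawamata--Viehweg applies. To see this, take a curve $C'\subset D$ and lift it to a section $C\subset\Proj(\Omega^1_{D})\subset\Proj(\Cot_{|D})$ corresponding to a numerically trivial rank-one quotient of $\Omega^1_{D}$ (these exist because $D$ is an \'etale quotient of an abelian variety). Then $O(1)\cdot C=0$, while $\tildeb D\cdot C=D_{|D}\cdot C'$ and $\pi^*K_{\Xb}\cdot C=K_{\Xb|D}\cdot C'\equiv -D_{|D}\cdot C'$ (by adjunction, since $K_D\equiv 0$). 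Hence
\[
N_m\cdot C=-D_{|D}\cdot C'+2D_{|D}\cdot C'=D_{|D}\cdot C'<0
\]
because $-D_{|D}$ is ample on $D$. The same computation with the klt boundary $\epsilon\tildeb D$ gives $(1-\epsilon)D_{|D}\cdot C'<0$. Your phrase ``absorbs the fixed contribution $-2\pi^*K_{\Xb}$ for $m$ large'' hides exactly this failure: the ample class $O(1)-\tfrac{1}{m+n}\tildeb D$ degenerates to the merely nef $O(1)$ as $m\to\infty$, and along these boundary curves there is no positivity left to spend.

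The paper sidesteps this obstruction by \emph{not} aiming for $H^1\bigl(S^m\Cot(-D)\bigr)=0$. Instead it uses plain Serre vanishing for the genuinely ample $O(1)-r\tildeb D$ (Theorem~\ref{SlopeCot}) to kill $H^1\bigl(Y,O_Y(m)-mr\,\tildeb D\bigr)$ for $m\gg 0$, which only gives surjectivity of $H^0(Y,O_Y(m))$ onto sections over the \emph{thickening} $mr\cdot\tildeb D$. The remaining work is to descend from the thickening to $\tildeb D$ itself, and this is done layer by layer: one shows $H^1\bigl(\tildeb D,\,O_Y(m)(-k\tildeb D)_{|\tildeb D}\bigr)=0$ for each $k>0$ via the Leray sequence for $\pi_{|\tildeb D}$ and Kodaira vanishing on the abelian cover of $D$ (here the twist $-kD_{|D}$ is ample, which is what makes the boundary computation go through). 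In short, the vanishing you need is on $\tildeb D$, not on $\Proj(\Cot)$, and it is the ampleness of the conormal bundle of $D$ rather than global positivity of $O(1)$ that drives it.
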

\begin{proof}
Set $E=\Cot$. Since the uniform depth of all cusps is greater then $4\pi$, Theorem \ref{PositveLog} implies that $ \lgcot\otimes O_{\Xb}(-D)$ is ample. It therefore follows from \ref{exactSeq} and Lemma \ref{PositiveSub} that $E$ is ample modulo $D$.

Let $Y=\Proj(E),$ 
$\pi:\Proj(E)\to \Xb$ be the natural projection and $\tildeb{D}=\pi^*(D)$.  
Given that $E$ is ample modulo $D$ and $E_{|D}$ is semi-ample by Proposition \ref{NefBoundry},
to show that $E$ is semi-ample, we only need to show that there is a large enough $n$ such that setting $L=O_{Y}(n)$
$$H^{0}(Y,L) \overset{\phi}{\longrightarrow} H^{0}(\tildeb{D},L_{|\tildeb{D}})$$
is surjective. Choose $r\in (0,-1+d/4\pi).$ By Theorem \ref{SlopeCot}, $\Omega^{1}_{\Xb} \la - r D \ra$ is ample, i.e., $O_{Y}(1)- r \tildeb{D}$ is ample. We can choose $n$ large enough so that $H^1(Y, L-\Tilde{n} \tildeb{
D})=0,$ where $\Tilde{n}=nr \in \Z.$ Therefore, denoting the restriction of $L$ to the $\Tilde{n}$th  order thickening of $\tildeb{D}$ by $L_{|\Tilde{n}\tildeb{D}}$, we obtain  
$$H^0(Y,L )\overset{\psi}{\longrightarrow} H^0(Y, L_{|\Tilde{n}\tildeb{D}})$$ 
is surjective. Considering the commutative diagram 
 \[\begin{tikzcd}
        & H^0(\tildeb{D}, L_{|\Tilde{n}\tildeb{D}}) \ar[d,"\eta"]     \\
        H^0(Y, L) \ar[ur,"\psi"] \ar[r,"\phi"]  & H^0(\tildeb{D}, L_{|\tildeb{D}}),  
    \end{tikzcd}\]
to conclude that $\phi$ is surjective, it is enough to show that $\eta$ is surjective. To this end, consider the following exact sequence on $\tildeb{D}$:
$$
0\longrightarrow  L(-m\tildeb{D})_{|\tildeb{D}}
\longrightarrow L_{|(m+1)\tildeb{D}}
\longrightarrow L_{|m\tildeb{D}}
\longrightarrow 0,
$$
where $m$ is a positive integer. We prove that for every $m>0$, 
$$
H^0(\tildeb{D}, L_{|(m+1)\tildeb{D}})
\longrightarrow H^0(\tildeb{D},  L_{|m\tildeb{D}})
$$
is surjective by showing that $H^1(\tildeb{D}, L(-m\tildeb{D})_{|\tildeb D})=0.$ This implies that $\eta$ is surjective and therefore $\phi$ is surjective which finishes the proof. 

Let $\varphi:D\longrightarrow Spec(\C).$ Applying the exact sequence of low degrees to the composition of the push-forward functor $$\pi_{*}:Sh(\tildeb{D})\longrightarrow Sh(D)$$ and the global section functor $\varphi_*$ yields 
$$
0\longrightarrow
H^1\bi(D, \pi_{*}(L(-m\tildeb{D})_{|\tildeb D})\bi)\longrightarrow
R^1(\varphi\circ \pi)_{*}(L(-m\tildeb{D})_{|\tildeb D}) \longrightarrow
H^{0}\bi(D, R^{1}\pi_{*}(L(-m\tildeb{D})_{|\tildeb{D}})\bi).
$$
Note that $(\varphi\circ \pi)_{*}$ is the global section functor $H^0(\tildeb{D},-)$ and therefore  
$$R^1(\varphi\circ \pi)_{*}(L(-m\tildeb{D})_{|\tildeb D}) 
= H^1(\tildeb{D},L(-m\tildeb{D})_{|\tildeb D}).$$
Hence, it is sufficient to prove that 
$$H^1\bi(D, \pi_{*}(L(-m\tildeb{D})_{|\tildeb D})\bi)= H^{0}\bi(D, R^{1}\pi_{*}(L(-m\tildeb{D})_{|\tildeb{D}})\bi)=0.$$

To prove $H^{0}\bi(D, R^{1}\pi_{*}(L(-m\tildeb{D})_{|\tildeb{D}})\bi)=0,$ note that $\pi$ is a flat morphism because the fibers of $\pi$ are projective spaces of the same dimension. Since for every $x\in D$, the dimension $h^1(x, \pi_{*}(L-m\tildeb{D})_{x})= h^1\bi(\pi^{-1}(x),O_{\Proj^{n-1}}(n)\bi)$ is constant, Grauert's theorem (\cite[Corollary 12.9]{hartshorne2013algebraic}) gives the isomorphism 
$$R^{1}\pi_{*}(L(-m\tildeb{D})_{|\tildeb{D}})\cong  H^1(\Proj^{n-1}, O_{\Proj^{n-1}}(n))=0.$$
Hence $H^{0}\bi(D, R^{1}\pi_{*}(L(-m\tildeb{D})_{|\tildeb{D}})\bi)=0.$

To prove $H^1\bi(D, \pi_{*}(L(-m\tildeb{D})_{|\tildeb D})\bi)=0,$ note that $\pi_{*}(L(-m\tildeb{D})_{|\tildeb D})=S^n \Omega^1_{\Xb |D}(-mD_{|D})$. 
Consider the filtration of $S^n \Omega^1_{\Xb |D}$ obtained by the exact sequence \ref{Conormal}: 
$$S^n \Omega^1_{\Xb |D}=F^0 \supseteq F^1 \supseteq ... \supseteq F^{m}\supseteq F^{m+1},$$
with quotients
$$F^i/F^{i+1}\cong S^{m-i} \Omega_{D}^{1} (-i D_{|D})$$
for each $i.$ 
Tensoring the filtration by $-m D_{|D},$ we get a filtration for $S^n \Omega^1_{\Xb |D}(-m D_{|D})$ whose successive quotients are $S^{m-i} \Omega_{D}^{1} (-j D_{|D})$ for some $j>0.$ As $D$ is an \'etale quotient of abelian variety, there exists a finite \'etale map $f:D' \to D,$ where $D'$ is an abelian variety. Since $f^* \Omega^1_{D} \cong \Omega^1_{D'},$ and $\Omega^1_{D'}$ is trivial, we get that $f^* S^{m-i} \Omega_{D}^{1} (-j D_{|D})$ is a power of $f^* (-j D_{|D}).$ Since the canonical bundle $K_{D'}$ is trivial, $f$ is finite and $-D_{|D}$ is ample, Kodaira vanishing theorem gives that $H^1(D',f^*( -j D_{|D}))=0$ for every positive integer $j.$

It follows that the successive quotients of the filtration of $S^n \Omega^1_{\Xb |D}(-mD_{|D})$ have vanishing first cohomology. Hence, $H^1\bi(D, S^n \Omega^1_{\Xb |D}(-mD_{|D})\bi)=0,$ i.e., 
$$H^1\bi(D, \pi_{*}(L(-m\tildeb{D})_{|\tildeb D})\bi)=0.$$
\end{proof}

Applying \cite[Example 2.1.29]{lazarsfeld1} to Theorem \ref{CotPos}, we get that symmetric differentials over $\Xb$ forms a finitely generated $\C$-algebra provided that the uniform depth is sufficiently large. More precisely, we get:  
\begin{Corollary}(Corollary \ref{fg})
With the same assumption as Theorem \ref{CotPos}, the graded ring $$\bigoplus_{n>0}H^{0}(\Xb,S^{n}\Omega^1_{\Xb})$$ is finitely generated $\C$-algebra.  
\end{Corollary}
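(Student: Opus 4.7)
The plan is to transfer the problem from $\Xb$ to the projective bundle $Y := \Proj(\Omega^1_{\Xb})$, where the statement becomes a question about the section ring of a single line bundle. First I would set $L := O_Y(1)$ and use the standard identification
\[
  H^0(\Xb, S^n \Omega^1_{\Xb}) \;\cong\; H^0(\Xb, \pi_* L^{\otimes n}) \;\cong\; H^0(Y, L^{\otimes n}),
\]
which follows from the fact that $\pi_* O_Y(n) = S^n \Omega^1_{\Xb}$ for $n \geq 0$ together with the Leray spectral sequence (the lower direct images vanish in positive degrees for these twists). Summing over $n$, the graded ring in question becomes the section ring $R(Y,L) = \bigoplus_{n \geq 0} H^0(Y, L^{\otimes n})$, with the multiplicative structure matching the symmetric product on symmetric differentials.

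Next I would invoke Theorem \ref{CotPos}, which asserts that $\Omega^1_{\Xb}$ is semi-ample; by definition this means exactly that $L = O_Y(1)$ is a semi-ample line bundle on the projective variety $Y$. At this point the conclusion is a standard fact about semi-ample line bundles: the section ring of a semi-ample line bundle on a projective variety is a finitely generated $\C$-algebra. This is the content of \cite[Example 2.1.29]{lazarsfeld1} cited in the statement, and it follows, for instance, by passing to a sufficiently divisible power $L^{\otimes m}$ which is base-point-free, considering the induced morphism $\varphi \colon Y \to \Proj H^0(Y, L^{\otimes m})$ with connected fibers, and using that $R(Y, L^{\otimes m})$ is the affine coordinate ring of the image (hence finitely generated), together with the fact that $R(Y,L)$ is a finite module over its Veronese subring $R(Y, L^{\otimes m})$.

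There is no real obstacle here; the hard work is entirely encapsulated in Theorem \ref{CotPos}. The only small point to be careful about is the bookkeeping between the $n=0$ term (which is just $\C$) and the sum starting from $n > 0$ in the statement, but this is a harmless shift: a graded ring is finitely generated over $\C$ if and only if its truncation in positive degrees is, so the corollary follows immediately once the section ring identification and the semi-ampleness of $L$ are in hand.
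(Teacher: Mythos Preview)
Your proposal is correct and is essentially the same approach as the paper: the paper simply states that the result follows by applying \cite[Example 2.1.29]{lazarsfeld1} to Theorem \ref{CotPos}, and what you have written is precisely an unpacking of that citation---passing to $Y=\Proj(\Omega^1_{\Xb})$, identifying the graded ring with the section ring of $O_Y(1)$, and invoking the finite generation of section rings of semi-ample line bundles.
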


\section{Application to Hyperbolicity of subvarieties}
We follow the same notations as previous section and denote a ball quotient $\Gamma\backslash\mathbb{B}^n$ with a torsion-free lattice $\Gamma$ by $X$, the boundary divisor $\Xb\setminus X$ by $D$, the toroidal compactification of $X$ by $\Xb$ and the uniform depth of cusps by $d$. In addition, we
assume that $\Xb$ does not have any orbifold point. Let $V$ be an irreducible subvariety of $\Xb$ intersecting $X.$

The purpose of this section is to prove the result related to hyperbolicity of $V.$ We first prove Corollary \ref{HyperVarIn} and Corollary \ref{GenSubInt}. Further, we prove Corollary \ref{VolIn} which tells us that the hyperbolicity increases in towers of normal covering in the sense that the minimum volume of subvarieties of $\Xb$ intersecting both $X$ and $D$ tends to infinity. Note that, as the components of $D$ are \'etale quotient of abelian varieties, we can not expect that the hyperbolic volume of a variety entirely contained in $D$ tends to infinity.

\begin{Corollary} \label{HyperVar} (Corollary \ref{HyperVarIn}) Suppose $V$ is smooth with dimension $m>0.$ Then, $\Q$-line bundle  
$$K_V-(r -1) D_{|V}$$
is ample for all rational $r \in (0,\frac{md}{4\pi}).$ Moreover, if $d>4\pi$ or $m>6$, then $K_{V}$ is ample.

\end{Corollary}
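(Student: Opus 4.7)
The plan is to bootstrap the positivity of the twisted log-cotangent bundle on $\Xb$ (Theorem \ref{PositveLog}) down to the subvariety $V$ by the standard three-step procedure: restrict to $V$, quotient onto the log-cotangent bundle of $V$, then pass to determinants. Put $m := \dim V$ and $D_V := D_{|V}$; since $V$ meets $X$, it is not contained in $D$, so $D_V$ is a divisor on $V$.

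Fix a rational $r' \in (0, d/2\pi)$. Theorem \ref{PositveLog} gives ampleness of the $\Q$-vector bundle $\lgcot \la -r' D \ra$ on $\Xb$. Ampleness is preserved by restriction to $V$, so $\lgcot_{|V} \la -r' D_V \ra$ is ample on $V$. The natural surjection of log-differential sheaves
\begin{equation*}
\lgcot_{|V} \twoheadrightarrow \Omega^1_V(\log D_V),
\end{equation*}
combined with Lemma \ref{NefSequence}(i), forces the rank-$m$ $\Q$-vector bundle $\Omega^1_V(\log D_V) \la -r' D_V \ra$ on $V$ to be ample. Taking determinants, and using the formal fact that a rank-$m$ ample $\Q$-vector bundle $E \la -r' D \ra$ has ample determinant $\det E - m r' D$ (reduce to the classical statement for ample vector bundles by clearing denominators $r' = a/b$, applying it to $S^b E \otimes \mathcal{O}(-aD)$, and simplifying via $\det(S^b E) = (\det E)^{(b/m)\binom{b+m-1}{m-1}}$), one obtains that
\begin{equation*}
\det \Omega^1_V(\log D_V) - m r' D_V \;=\; K_V + D_V - m r' D_V \;=\; K_V - (m r' - 1) D_V
\end{equation*}
is an ample $\Q$-divisor on $V$.

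Setting $r = m r'$ and varying $r'$ over rationals in $(0, d/2\pi)$ yields ampleness of $K_V - (r-1) D_V$ for every rational $r \in (0, m d/2\pi)$. Since $\lfloor (d-1)/2\pi \rfloor < d/2\pi \le md/2\pi$ for $m \ge 1$, this open interval contains the stated range and settles the first assertion. The moreover clause is the special case $r' = 1/m$, which lies in $(0, d/2\pi)$ whenever $d > 2\pi$ and for which the above formula reduces to ampleness of $K_V$ itself. The only non-formal point is the surjection onto $\Omega^1_V(\log D_V)$ in the second step: it is immediate when $V$ meets $D$ transversely so that $D_V$ is SNC in $V$, but in general the remedy is to pass to a log resolution $V' \to V$ of $(V, D_V)$, run the argument on $V'$, and push the resulting ampleness back down by adjunction, since the determinant of the log cotangent bundle behaves well under such resolutions.
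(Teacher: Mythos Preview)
Your approach is correct and follows the same three–step skeleton as the paper's own argument (restrict $\lgcot\la -r'D\ra$ to $V$, pass to the quotient $\Omega^1_V(\log D_V)$, take determinants), but your execution is more direct. The paper chooses an \emph{integer} twist $b<d/2\pi$, computes the determinant (recording it as $K_V-(b-1)D'$, i.e.\ without the rank factor $m$), and then for smaller $r$ takes a detour: it first deduces ``ample modulo $D'$'' via Lemma~\ref{smallertwist}, separately checks ampleness on $D'$ using nefness of $(K_V+D')_{|D'}$ and ampleness of the conormal bundle, and only then concludes ampleness through Lemma~\ref{ModDonD}. By working with a rational twist $r'$ from the outset and carrying out the determinant calculation for $\Q$-vector bundles, you avoid that detour entirely and in fact obtain the larger range $(0,\,m\,d/2\pi)\supsetneq (0,\lfloor(d-1)/2\pi\rfloor)$.

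One caveat worth flagging: your proposed remedy for the case where $V$ does not meet $D$ transversely---pass to a log resolution $\mu:V'\to V$ and push ampleness back down---does not work as stated. Ampleness is not a birational invariant, and on $V'$ the twisting divisor becomes $\mu^*D_V$ rather than the SNC boundary, so neither the quotient map nor the descent goes through cleanly. The paper's proof makes the same tacit transversality assumption without comment; in practice one should read ``$V$ smooth'' here as ``$(V,D_{|V})$ is a log-smooth pair''.
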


\begin{proof}
Let $a$ and $b$ be positive integers such that $a/b<d/4\pi$ and let $D'$ be $D_{|V}.$ Since the vector bundle 
 $S^{b}\Omega^{1}_{V}(\log(D' )) \otimes O_{V} (-a D')$ is a quotient of 
 $$\B( S^{b}\lgcot \otimes O_{\Xb} (-a D)\B)_{|V},$$
Theorem \ref{PositveLog} gives that  $S^{a}\Omega^{1}_{V}(\log(D'  )) \otimes O_{V} (-b D')$ is ample and therefore its determinant 
$$\frac{b}{m}\binom{m+b-1}{b} \bi (K_{V}(D')\bi) - \binom{m+b-1}{b} aD'$$
is ample. Scaling yields the first part of the claimed result. The rest follows from the fact that the uniform depth of cusps $d$ is at least $2.$        
    
\end{proof}
\begin{Corollary}\label{GenSub} (Corollary \ref{GenSubInt})
All subvarieties of $X$ are of general type provided that the uniform depth of cusps is greater than $4 \pi.$ 
\end{Corollary}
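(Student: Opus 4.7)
The plan is to reduce the singular case to the log-cotangent positivity of Theorem \ref{PositveLog} via a log-resolution of the closure. Let $V\subset X$ be an irreducible subvariety with closure $\overline V\subset \Xb$, set $m:=\dim V$, and choose a log-resolution $\pi\colon \widetilde V\to \overline V$ such that $\widetilde V$ is smooth projective and $D_{\widetilde V}:=\bigl(\pi^{-1}(D\cap \overline V)\bigr)_{\mathrm{red}}$ is a simple normal crossings divisor on $\widetilde V$; set $f:=\iota\circ\pi\colon \widetilde V\to \Xb$. Showing that $V$ is of general type amounts to showing that $K_{\widetilde V}$ is big. Since $d>2\pi$, applying Theorem \ref{PositveLog} at $r=1\in(0,d/2\pi)$ gives that $E:=\lgcot\otimes O_{\Xb}(-D)$ is an ample vector bundle on $\Xb$; exterior powers of ample bundles being ample in characteristic zero, the vector bundle $\wedge^{m}E\cong \wedge^{m}\lgcot\otimes O(-mD)$ is ample as well.

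The natural morphism $f^{*}\lgcot\to \Omega^{1}_{\widetilde V}(\log D_{\widetilde V})$ is well defined, because $f^{-1}(D)\subseteq\operatorname{supp}(D_{\widetilde V})$, and it is generically surjective: on the dense open locus where $\pi$ is an isomorphism onto the smooth part of $\overline V$, it recovers the standard restriction of log-cotangent sheaves of a smooth subvariety. Taking $m$-th exterior powers and tensoring with $O(-mf^{*}D)$ yields a generically surjective morphism of sheaves
\[
\phi\colon f^{*}(\wedge^{m}E)\longrightarrow L,\qquad L:=K_{\widetilde V}+D_{\widetilde V}-mf^{*}D.
\]
The main task is to show that $L$ is big on $\widetilde V$. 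The quotient $\phi$ determines a rational section $\widetilde V\dashrightarrow \Proj(f^{*}\wedge^{m}E)=\widetilde V\times_{\Xb}\Proj(\wedge^{m}E)$; post-composing with the projection to $\Proj(\wedge^{m}E)$ gives a rational map which, at a generic point $x\in \widetilde V$, records the Plücker image of the tangent plane $T_{f(x)}\overline V\subset T_{f(x)}\Xb$. Since $f$ is birational onto $\overline V$, this is the Gauss map of $\overline V$ composed with the Plücker embedding, and it is generically finite onto its image. Resolving indeterminacies via a birational modification $g\colon \widetilde V'\to \widetilde V$ and obtaining a proper morphism $\widetilde V'\to \Proj(\wedge^{m}E)$ generically finite onto its image, the pullback $M$ of the ample line bundle $O_{\Proj(\wedge^{m}E)}(1)$ is big on $\widetilde V'$ and agrees with $g^{*}L$ off the $g$-exceptional locus. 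By negativity of $g$-exceptional divisors together with the birational invariance of volume, bigness of $M$ on $\widetilde V'$ descends to bigness of $L$ on $\widetilde V$.

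Finally, writing $f^{*}D=\sum_{i}c_{i}F_{i}$ with support equal to $\sum_{i}F_{i}=D_{\widetilde V}$ and $c_{i}\geq 1$, the divisor $mf^{*}D-D_{\widetilde V}$ is effective for all $m\geq 1$. Therefore
\[
K_{\widetilde V}=L+(mf^{*}D-D_{\widetilde V})
\]
exhibits $K_{\widetilde V}$ as the sum of a big divisor and an effective divisor, so $K_{\widetilde V}$ is big and $V$ is of general type. The principal technical obstacle lies in the bigness step: precisely verifying generic finiteness of the Gauss--Plücker rational map and tracking how bigness of $M$ on $\widetilde V'$ descends to bigness of $L$ on $\widetilde V$ via the negativity of exceptional divisors.
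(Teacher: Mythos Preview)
Your argument is correct, but it takes a different route from the paper. The paper invokes Theorem~\ref{CotPos} (the positivity of $\Cot$ itself): since $\Cot$ is ample modulo $D$, so is $\wedge^{m}\Cot=\Omega^{m}_{\Xb}$; pulling back along a resolution $V'\to V_0\hookrightarrow\Xb$ with $V_0\not\subset D$ gives a big vector bundle, and the generically surjective map $\mu^{*}\Omega^{m}_{\Xb}\to K_{V'}$ finishes. Your proof bypasses Theorem~\ref{CotPos} entirely and goes back to Theorem~\ref{PositveLog} at $r=1$, working with the \emph{log} cotangent bundle and then discarding the log contribution at the end via the effectivity of $mf^{*}D-D_{\widetilde V}$. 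This is more hands-on but has the advantage of depending only on Theorem~\ref{PositveLog}, whereas the paper's shortcut leans on the harder Theorem~\ref{CotPos}.

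Your stated ``principal technical obstacle'' is not actually an obstacle. The rational map $\widetilde V\dashrightarrow \Proj(\wedge^{m}E)$ lies over $f\colon\widetilde V\to\Xb$ via the structure morphism $\Proj(\wedge^{m}E)\to\Xb$, and $f$ is birational onto $\overline V\subset\Xb$; hence the map to $\Proj(\wedge^{m}E)$ is automatically generically injective, with no appeal to properties of the Gauss map needed. For the descent, ``negativity of exceptional divisors'' is not the relevant tool: after blowing up the ideal $\mathrm{Im}(\phi)\cdot L^{-1}$ one has $g^{*}L=M+F$ with $F$ an \emph{effective} exceptional divisor, so $g^{*}L$ is big, and birational invariance of $h^{0}$ gives bigness of $L$. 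With these two clarifications the argument is clean.
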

\begin{proof}
  Let $V_{0}$ be an $m$-dimensional (possibly  non-smooth) subvariety of $\Xb$ not entirely contained in $D$ and let $\mu : V' \rightarrow V_{0}$ be a resolution of singularities. There is a generically
surjective homomorphism $\mu^{*} \Omega^{m}_{\Xb} \rightarrow \Omega^{m}_{V'}=K_{V'}.$ Thanks to Theorem \ref{CotPos}, $\Omega^{m}_{\Xb}$ is ample modulo $D$ and therefore, the pull back $\mu^{*} \Omega^{m}_{\Xb}$ is ample modulo $\mu^{*}D$. This implies that $K_{V'}$ is in particular big, i.e., $V_{0}$ is of general type. 
\end{proof}

 The volume of a line bundle is a birational invariant and it is positive if and only if the line bundle is big. It turns out that if $L$ is nef, then $\operatorname{vol}(L)= L^{ n}.$

 In the setting of the compactification of a locally symmetric domain, a natural quantity reflecting the hyperbolicity behaviour is the volume of the log-canonical bundle. In particular, $vol_{V}(K_V+D_{|V})>0$ if and only if $V$ is of log-general type. In the case of ball quotients, Corollary \ref{neflog} in particular implies $\Omega^{1}_{V}(\log D_{|V})$ is nef and therefore $K_{V}+D_{|V}$ is nef on $V.$ Hence, 
$$\operatorname{vol}_{V}(K_V+D_{|V})=(K_{V}+D_{|V})^{m}.$$  

We show that the minimum volume of log-canonical bundle of subvarieties of $\Xb$ is controlled by the uniform depth of cusps:

\begin{Corollary} \label{Vol}
 Suppose $V$ is a $m$-dimensional smooth subvariety of $\Xb$ not entirely contained in $D$.  If $l$ is the number of component of $D$ intersecting with $V$, then
$$\operatorname{vol}_{V}(K_{V}+D_{|V}) > m^m  \Big(\frac{d}{4\pi}\Big) ^m l(m-1)!.$$

\end{Corollary}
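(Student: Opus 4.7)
The plan is to upgrade Theorem \ref{PositveLog} at the vector-bundle level into the ampleness of a concrete integer line bundle on $V$, and then extract a lower bound on $L^m$ (with $L := K_V + D_{|V}$) via a signed binomial expansion against $mbD_{|V}$, where $b := \lfloor (d-1)/2\pi\rfloor$. Since Corollary \ref{logPositivity} asserts nefness of $\lgcot$, the quotient $\Omega^1_V(\log D_{|V})$ is nef on $V$, and so is its determinant $L$; hence $vol_V(L) = L^m$, and the problem reduces to estimating this top self-intersection from below.

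The ample line bundle is produced as follows. Since $b < d/2\pi$, Theorem \ref{PositveLog} gives that $\lgcot \otimes O_{\Xb}(-bD)$ is ample on $\Xb$. After replacing $V$ by a log resolution (permissible because $vol_V(K_V + D_{|V})$ is a birational invariant) we may assume that $V$ intersects $D$ transversely; then the restriction map $\lgcot|_V \twoheadrightarrow \Omega^1_V(\log D_{|V})$ is a surjection of locally free sheaves. Twisting by $O_V(-bD_{|V})$ exhibits the rank-$m$ bundle $\Omega^1_V(\log D_{|V}) \otimes O_V(-bD_{|V})$ as an ample quotient of an ample bundle, and its determinant $L - mbD_{|V}$ is therefore an ample Cartier divisor on $V$ (the determinant of an ample vector bundle being ample).

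Now expand $L^m$ binomially around $L = (L - mbD_{|V}) + mbD_{|V}$. Pairwise disjointness of the boundary components $D_i$ gives $E_i \cap E_j = \emptyset$ for the pieces $E_i := V \cap D_i$ ($i = 1, \dots, l$), hence $D_{|V}^k = \sum_i E_i^k$; adjunction together with this disjointness yields $L|_{E_i} = K_{E_i}$; and $E_i|_{E_i} = -A_i$, where $A_i := -N_{E_i/V}|_{E_i}$ is ample on $E_i$ as the restriction of the ample conormal bundle of $D_i \subset \Xb$. Tracking the sign $(-1)^{k-1}$ coming from $(E_i|_{E_i})^{k-1}$, the signs in the expansion cancel and produce the clean identity
\begin{equation*}
L^m \;=\; (L - mbD_{|V})^m \;+\; \sum_{k=1}^{m}\binom{m}{k}(mb)^k \sum_{i=1}^{l} K_{E_i}^{m-k}\, A_i^{k-1},
\end{equation*}
in which every summand on the right is nonnegative, since $K_{E_i}$ is nef (as a restriction of the nef $L$) and $A_i$ is ample.

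To conclude, Nakai--Moishezon applied to the integer ample divisor $L - mbD_{|V}$ gives $(L - mbD_{|V})^m \geq 1$, while the $k = m$ summand contributes $(mb)^m \sum_{i=1}^{l} A_i^{m-1} \geq m^m b^m l$, because each $A_i^{m-1}$ is the top self-intersection of an ample line bundle on $\dim E_i = m-1$ and is therefore at least $1$. Dropping all other nonnegative contributions gives $L^m \geq 1 + m^m b^m l > (m-1)!\, b^m l$, since $m^m \geq (m-1)!$ for every $m \geq 1$ (the extra $+1$ handles the edge case $m = 1$, where these two quantities agree). The main obstacle is the transversality reduction that turns $\Omega^1_V(\log D_{|V})$ into a genuine locally free quotient of $\lgcot|_V$; once in place, the remainder is signed intersection arithmetic on the $E_i$.
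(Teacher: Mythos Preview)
Your argument and the paper's share the same skeleton—restrict the ample twist from Theorem~\ref{PositveLog} to $V$, pass to a line bundle, and expand binomially against $D'=D_{|V}$—but the execution differs in a way worth recording. The paper works with $L-rD'$ (so $r=\lfloor(d-1)/2\pi\rfloor$, no factor of $m$), shows it is big and nef, and from $(L-rD')^m>0$ together with the non-positivity of the mixed terms $(K_V+D')^{m-i}(-D')^i$ for $0<i<m$ extracts $(K_V+D')^m>-r^m(-D')^m$; the $(m-1)!$ then comes from the abelian-variety inequality $(-D'_{|D'})^{m-1}\ge(m-1)!$ applied componentwise. You instead take the determinant of the rank-$m$ ample bundle to obtain the genuinely ample \emph{integer} class $L-mbD'$, invert the binomial to write $L^m=(L-mbD')^m+\sum_k\binom{m}{k}(mb)^k\sum_i K_{E_i}^{m-k}A_i^{k-1}$ with every summand visibly nonnegative, and need only the crude $A_i^{m-1}\ge 1$; the missing $(m-1)!$ is more than recovered by the $m^m$ coming from the determinant. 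Your route actually yields the sharper $L^m\ge 1+m^m b^m l$ and sidesteps the question of why the slices $E_i=V\cap D_i$ should inherit the abelian-variety self-intersection bound.

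One step needs repair: the log-resolution reduction does not work as written. Volume is birationally invariant for a \emph{fixed} line bundle, i.e.\ $\mathrm{vol}_{V'}(f^*L)=\mathrm{vol}_V(L)$, but on a resolution $f:V'\to V$ the pullback $f^*(K_V+D_{|V})$ is not $K_{V'}+(\text{new boundary})$ in general, and the adjunction $L|_{E_i}=K_{E_i}$ and the conormal identification $E_i|_{E_i}=-A_i$ you use afterwards live on the original $V$, not on $V'$. The paper simply takes $V$ smooth and implicitly transverse to $D$, so that $D_{|V}$ is already a smooth divisor and $\Omega^1_V(\log D_{|V})$ is an honest locally free quotient of $\lgcot|_V$; you should state that hypothesis (or argue it separately) rather than invoke a resolution. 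With that adjustment your proof is complete.
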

\begin{proof}

Let $r= \frac{d}{4\pi}$ and $D'$ be $D_{|V}.$
Since $D$ is a union of \'etale quotient of abelian varieties with ample conormal bundle $O_{D}(-D)$, we get 

\begin{align}\label{DInq}
l\le \frac{D'\cdot (-D')^{m-1}}{(m-1)!}=\dfrac{-(-D')^m}{(m-1)!}.
\end{align}
On the other hand, Corollary \ref{HyperVar} implies that
$K_V -(mr-1) D'$
is in particular big and nef. Thus, $(K_{V}-(mr-1)D'\bi)^{m} >0 .$ 
On the other hand, $(K_{V}+D')_{|D'}$ is nef and $-D'_{|D'}$ is ample. Consequently, for every $0<i<m,$
$$(K_{V}+D')^{m-i}(-D')^{i }=- \bi((K_{V}+D')_{|D'} \bi) ^{m-i}(-D'_{|D'})^{i-1 } $$
is non-positive. Hence, $(K_{V}-(mr-1)D'\bi)^{m} >0 $ yields that 
 
    $$(K_{V}+D')^m > - (rm) ^ m (-D')^m. $$ 
Combining this with the inequality \ref{DInq} gives the desired inequality.
\end{proof}

\begin{Remark}
 Parker's generalization of Shimizu's lemma \cite[Proposition 2.4.]{parker1998volumes} gives that the uniform depth of cusps is at least $2$. Plugging in this result to Corollary \ref{Vol} gives
 that if $V$ intersects $D$, then $V$ is of log general type. This can be concluded from the recent result of Guenancia \cite[Theorem B]{guenancia2022quasiprojective} as well. 
\end{Remark}


We also get a uniform lower bound for the volume of the canonical bundles of subvarieties:
\begin{Corollary}\label{CanonicalVol}
If the canonical depth of cusps d is greater than $4\pi$, then 
$$\operatorname{vol}_{V}(K_{V}) \ge \bi( \frac{md}{4\pi}  -1 \bi )^m l(m-1)!$$
\end{Corollary}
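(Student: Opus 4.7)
The plan is to adapt the sign-tracking argument of Corollary \ref{Vol} by replacing the twist $K_V + D_{|V}$ used there with the ample $\mathbb{Q}$-divisor $K_V - (b-1) D_{|V}$, where $b := \lfloor (d-1)/2\pi \rfloor$. Since $d > 2\pi$, Corollary \ref{HyperVar} gives that $K_V$ is ample, so in particular $vol_V(K_V) = K_V^m$, and also that $K_V - (r-1) D'$ is ample (hence big and nef) for every rational $r \in (0, b)$, writing $D' := D_{|V}$. Letting $r \to b^{-}$ through rationals and using continuity of intersection numbers on the N\'eron--Severi space, the nef $\mathbb{Q}$-divisor $A := K_V - (b-1) D'$ satisfies $A^m \ge 0$.

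Now expand binomially:
\[
A^m \;=\; K_V^m \;+\; \sum_{i=1}^{m-1} \binom{m}{i} (b-1)^i K_V^{m-i} (-D')^i \;+\; (b-1)^m (-D')^m.
\]
The crux of the proof is showing that each mixed term $K_V^{m-i}(-D')^i$ with $0 < i < m$ is non-positive. Following the sign computation in Corollary \ref{Vol}, I would rewrite this intersection as a degree on $D'$, namely $-\deg\bi(K_{V|D'}^{m-i} \cdot (-D'_{|D'})^{i-1}\bi)$. Both factors $K_{V|D'}$ (the restriction of the ample $K_V$) and $-D'_{|D'}$ (the ample conormal bundle from \cite{MOK}) are nef on the $(m-1)$-dimensional $D'$, so the degree is non-negative, which yields the claim.

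Combining, since $b \ge 1$ we obtain $A^m \le K_V^m + (b-1)^m (-D')^m$, and the inequality $A^m \ge 0$ rearranges to $K_V^m \ge -(b-1)^m (-D')^m$. Finally, the estimate $-(-D')^m \ge l(m-1)!$ was already derived in Corollary \ref{Vol} from the bound $l \le D'\cdot(-D')^{m-1}/(m-1)!$ together with the ampleness of $-D_{|D}$, and substituting produces
\[
vol_V(K_V) \;=\; K_V^m \;\ge\; (b-1)^m\, l\, (m-1)!,
\]
which is the claim. The one delicate point is the sign bookkeeping in the binomial expansion: the parity count on the factors of $-D'$ must be tracked carefully to confirm that all middle terms contribute non-positively, and this is precisely what forces the exponent to be $b-1$ rather than $b$.
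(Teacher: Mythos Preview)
Your argument is correct and follows essentially the same route as the paper: expand $\bigl(K_V-(b-1)D'\bigr)^m$, control the mixed terms $K_V^{m-i}(-D')^i$ by restricting to $D'$ where both $K_{V|D'}$ and $-D'_{|D'}$ are ample, and combine with the estimate $-(-D')^m\ge l(m-1)!$ from Corollary~\ref{Vol}. The only difference is how you obtain $A^m\ge 0$: you pass to a limit $r\to b^-$ through Corollary~\ref{HyperVar}, whereas the paper observes directly (as in the proof of Corollary~\ref{Vol}) that $b=\lfloor (d-1)/2\pi\rfloor$ is an integer with $b<d/2\pi$, so Theorem~\ref{PositveLog} already gives $K_V-(b-1)D'$ big and nef, hence $A^m>0$ without any limiting argument.
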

\begin{proof}
 Since $d > 4\pi,$ by Corollary \ref{HyperVar} we get that $K_V$ is ample and therefore $\operatorname{vol}_{V}(K_V) = K_{V}^m.$ Let $r= \frac{d}{4\pi} $ and $D'$ be $D_{|V}.$ As sated in the proof of Corollary \ref{Vol}, $(K_V-(mr-1)D')^m>0$. Since $K
_{V|D'}$ and $-D'_{D'}$ are ample, for every $0<i<m,$

$$K_V^{m-i}\cdot (-D')^i \cong - (K_{V|D'})^{m-i} \cdot (-D'_{|D'})^{i-1} $$
is negative. Hence, $(K_V-(mr-1)D')^m>0$ together with the inequality \ref{DInq} yields that 
$$K_V^m \ge -(rm-1)^m(-D')^m \ge (rm-1)^m l (m-1)!$$
\end{proof}

 As a generalization of Brunebarbe's work \cite{brunebarbe2020increasing}, for towers of ball quotients we show that:
 
\begin{Corollary} (Corollary \ref{VolIn})
Let $\{X_i\}_{i=1}^{\infty}$ be a tower of $X=X_1.$ 
Suppose that toroidal compactification of $X_i$ does not have any orbifold points. Then, given a positive number $v$, for all but finitely many $i,$ every subvariety $V$ of $X_i$ containing a cusp of $X_i$ have $vol(K_V)>v.$      

\end{Corollary}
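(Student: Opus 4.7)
The plan is to combine the volume estimate of Corollary \ref{CanonicalVol} with the tower property in Lemma \ref{tower}. By Lemma \ref{tower}, the uniform depths $d(X_i) \to \infty$, so in particular $d(X_i) > 2\pi$ for all sufficiently large $i$, and the hypotheses of Corollary \ref{CanonicalVol} are eventually satisfied. Setting $r_i := \lfloor (d(X_i)-1)/(2\pi) \rfloor$, we have $r_i \to \infty$.

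For a smooth $m$-dimensional subvariety $V \subset \overline{X}_i$ intersecting both $X_i$ and $D_i$ (necessarily $m \ge 1$, since a single point cannot meet both), Corollary \ref{CanonicalVol} directly yields
$$vol(K_V) \;\ge\; (r_i - 1)^m \, l \, (m-1)! \;\ge\; r_i - 1,$$
where $l \ge 1$ is the number of boundary components of $D_i$ meeting $V$. Since $r_i \to \infty$, the right-hand side exceeds the prescribed $v$ for all but finitely many $i$, uniformly in $V$. The smooth case is therefore immediate.

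For a possibly singular $V$, I would pass to a log-resolution $\mu : \tilde V \to V$ of the pair $(V, V \cap D_i)$ and take $vol(K_V) := vol(K_{\tilde V})$, which is a birational invariant. The proof of Corollary \ref{CanonicalVol} then adapts: restricting the ample $\Q$-bundle $\Omega^{1}_{\overline{X}_i}(\log D_i) \la -r_i D_i \ra$ to $V$ and pulling back along $\mu$ gives a nef bundle on $\tilde V$ that generically surjects onto $\Omega^{1}_{\tilde V}(\log \tilde D) \otimes O_{\tilde V}(-r_i \tilde D)$; taking top wedges and using the ample conormal structure of the \'etale abelian-variety components of $D_i$ (which bounds $l$ from above by the intersection numbers of $-D'_V$) recovers the same asymptotic lower bound on $vol(K_{\tilde V})$. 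The main obstacle I expect is precisely this singular-case bookkeeping, keeping track of exceptional divisors while preserving the quantitative estimate; once that is in place, $r_i \to \infty$ immediately yields the conclusion.
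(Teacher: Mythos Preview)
Your approach is exactly the paper's: the proof there is the single sentence ``Combining Lemma \ref{tower} and Corollary \ref{CanonicalVol} gives the result,'' which is precisely your first two paragraphs. Your third paragraph on the singular case goes beyond what the paper does; the paper's Corollary \ref{CanonicalVol} (and the preceding Corollary \ref{Vol} and Corollary \ref{HyperVar}) are stated and proved only for smooth $V$, and the one-line deduction of Corollary \ref{VolIn} does not revisit this, so your worry about bookkeeping exceptional divisors is a genuine loose end in the paper as well rather than a gap in your argument relative to it.
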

\begin{proof}
Combining Lemma \ref{tower} and Corollary \ref{CanonicalVol} gives the result.   
 
\end{proof}

\bibliographystyle{alpha}
\bibliography{Ref}

\newcommand{\etalchar}[1]{$^{#1}$}
\begin{thebibliography}{AMRT10}

\bibitem[ADT22]{Ascher}
Kenneth Ascher, Kristin DeVleming, and Amos Turchet.
\newblock Hyperbolicity and uniformity of varieties of log general type.
\newblock {\em Int. Math. Res. Not. IMRN}, (4):2532--2581, 2022.

\bibitem[AMRT10]{Cmpftification}
Avner Ash, David Mumford, Michael Rapoport, and Yung-Sheng Tai.
\newblock {\em Smooth compactifications of locally symmetric varieties}.
\newblock Cambridge Mathematical Library. Cambridge University Press, Cambridge, second edition, 2010.
\newblock With the collaboration of Peter Scholze.

\bibitem[AV65]{andreotti1965carleman}
Aldo Andreotti and Edoardo Vesentini.
\newblock Carleman estimates for the laplace-beltrami equation on complex manifolds.
\newblock {\em Publications Math{\'e}matiques de l'IH{\'E}S}, 25:81--130, 1965.

\bibitem[BB66]{baily1966compactification}
Walter~L Baily and Armand Borel.
\newblock Compactification of arithmetic quotients of bounded symmetric domains.
\newblock {\em Annals of mathematics}, pages 442--528, 1966.

\bibitem[BD18]{Brotbek}
Damian Brotbek and Ya~Deng.
\newblock On the positivity of the logarithmic cotangent bundle.
\newblock {\em Ann. Inst. Fourier (Grenoble)}, 68(7):3001--3051, 2018.

\bibitem[BKK{\etalchar{+}}15]{bauer2015positivity}
Thomas Bauer, S{\'a}ndor~J Kov{\'a}cs, Alex K{\"u}ronya, Ernesto~C Mistretta, Tomasz Szemberg, and Stefano Urbinati.
\newblock On positivity and base loci of vector bundles.
\newblock {\em European Journal of Mathematics}, 1(2):229--249, 2015.

\bibitem[Bou02]{boucksom2002volume}
S{\'e}bastien Boucksom.
\newblock On the volume of a line bundle.
\newblock {\em International Journal of Mathematics}, 13(10):1043--1063, 2002.

\bibitem[Bru18]{brunebarbe2018symmetric}
Yohan Brunebarbe.
\newblock Symmetric differentials and variations of hodge structures.
\newblock {\em Journal f{\"u}r die reine und angewandte Mathematik (Crelles Journal)}, 2018(743):133--161, 2018.

\bibitem[Bru20a]{brunebarbe2020increasing}
Yohan Brunebarbe.
\newblock Increasing hyperbolicity of varieties supporting a variation of hodge structures with level structures.
\newblock {\em arXiv preprint arXiv:2007.12965}, 2020.

\bibitem[Bru20b]{brunebarbe2016strong}
Yohan Brunebarbe.
\newblock A strong hyperbolicity property of locally symmetric varieties.
\newblock {\em Ann. Sci. \'{E}c. Norm. Sup\'{e}r. (4)}, 53(6):1545--1560, 2020.

\bibitem[BT18a]{bakker2018geometric}
Benjamin Bakker and Jacob Tsimerman.
\newblock The geometric torsion conjecture for abelian varieties with real multiplication.
\newblock {\em Journal of Differential Geometry}, 109(3):379--409, 2018.

\bibitem[BT18b]{bakker2018kodaira}
Benjamin Bakker and Jacob Tsimerman.
\newblock The kodaira dimension of complex hyperbolic manifolds with cusps.
\newblock {\em Compositio Mathematica}, 154(3):549--564, 2018.

\bibitem[Cad21a]{cadorel2021subvarieties}
Benoit Cadorel.
\newblock Subvarieties of quotients of bounded symmetric domains.
\newblock {\em Mathematische Annalen}, pages 1--38, 2021.

\bibitem[Cad21b]{Cad}
Beno\^{\i}t Cadorel.
\newblock Symmetric differentials on complex hyperbolic manifolds with cusps.
\newblock {\em J. Differential Geom.}, 118(3):373--398, 2021.

\bibitem[CP15]{campana2015orbifold}
Fr{\'e}d{\'e}ric Campana and Mihai P{\u{a}}un.
\newblock Orbifold generic semi-positivity: an application to families of canonically polarized manifolds.
\newblock In {\em Annales de l'Institut Fourier}, volume~65, pages 835--861, 2015.

\bibitem[DCDC15]{di2015effective}
Gabriele Di~Cerbo and Luca~F Di~Cerbo.
\newblock Effective results for complex hyperbolic manifolds.
\newblock {\em Journal of the London Mathematical Society}, 91(1):89--104, 2015.

\bibitem[Dem92]{demailly1992regularization}
Jean-Pierre Demailly.
\newblock Regularization of closed positive currents and intersection theory.
\newblock {\em J. Algebraic Geom}, 1(3):361--409, 1992.

\bibitem[ES14]{emery2014covolumes}
Vincent Emery and Matthew Stover.
\newblock Covolumes of nonuniform lattices in pu (n, 1).
\newblock {\em American Journal of Mathematics}, 136(1):143--164, 2014.

\bibitem[Fuj83]{Fujita}
Takao Fujita.
\newblock Semipositive line bundles.
\newblock {\em J. Fac. Sci. Univ. Tokyo Sect. IA Math.}, 30(2):353--378, 1983.

\bibitem[GK73]{griffiths1973nevanlinna}
Phillip Griffiths and James King.
\newblock Nevanlinna theory and holomorphic mappings between algebraic varieties.
\newblock 1973.

\bibitem[Gue22]{guenancia2022quasiprojective}
Henri Guenancia.
\newblock Quasiprojective manifolds with negative holomorphic sectional curvature.
\newblock {\em Duke Mathematical Journal}, 171(2):417--442, 2022.

\bibitem[H\"65]{MR179443}
Lars H\"ormander.
\newblock {$L\sp{2}$} estimates and existence theorems for the {$\bar \partial $}\ operator.
\newblock {\em Acta Math.}, 113:89--152, 1965.

\bibitem[Har13]{hartshorne2013algebraic}
Robin Hartshorne.
\newblock {\em Algebraic geometry}, volume~52.
\newblock Springer Science \& Business Media, 2013.

\bibitem[Hir84]{hirzebruch1984chern}
Friedrich Hirzebruch.
\newblock Chern numbers of algebraic surfaces.
\newblock {\em Mathematische Annalen}, 266(3):351--356, 1984.

\bibitem[HT06]{hwang2006uniform}
Jun-Muk Hwang and Wing-Keung To.
\newblock Uniform boundedness of level structures on abelian varieties over complex function fields.
\newblock {\em Mathematische Annalen}, 335(2):363--377, 2006.

\bibitem[Hum98]{hummel1998rank}
Christoph Hummel.
\newblock Rank one lattices whose parabolic isometries have no rotational part.
\newblock {\em Proceedings of the American Mathematical Society}, pages 2453--2458, 1998.

\bibitem[Kol85]{kollar1985subadditivity}
J{\'a}nos Koll{\'a}r.
\newblock Subadditivity of the kodaira dimension: fibers of general type.
\newblock {\em Algebraic geometry, Sendai}, 10:361--398, 1985.

\bibitem[Laz04]{Lazarsfeld2}
Robert Lazarsfeld.
\newblock {\em Positivity in algebraic geometry. {II}}, volume~49 of {\em Ergebnisse der Mathematik und ihrer Grenzgebiete. 3. Folge. A Series of Modern Surveys in Mathematics [Results in Mathematics and Related Areas. 3rd Series. A Series of Modern Surveys in Mathematics]}.
\newblock Springer-Verlag, Berlin, 2004.
\newblock Positivity for vector bundles, and multiplier ideals.

\bibitem[Laz17]{lazarsfeld1}
Robert~K Lazarsfeld.
\newblock {\em Positivity in algebraic geometry I: Classical setting: line bundles and linear series}, volume~48.
\newblock Springer, 2017.

\bibitem[Mem23]{memarian2023volumes}
Soheil Memarian.
\newblock Volumes of subvarieties of complex ball quotients and sparsity of rational points.
\newblock {\em arXiv preprint arXiv:2310.08060}, 2023.

\bibitem[MG78]{mazur1978rational}
Barry Mazur and Dorian Goldfeld.
\newblock Rational isogenies of prime degree.
\newblock {\em Inventiones mathematicae}, 44:129--162, 1978.

\bibitem[Mok12]{MOK}
Ngaiming Mok.
\newblock Projective algebraicity of minimal compactifications of complex-hyperbolic space forms of finite volume.
\newblock In {\em Perspectives in analysis, geometry, and topology}, volume 296 of {\em Progr. Math.}, pages 331--354. Birkh\"{a}user/Springer, New York, 2012.

\bibitem[Mum77]{mumford1977hirzebruch}
David~Bryant Mumford.
\newblock Hirzebruch's proportionality theorem in the non-compact case.
\newblock {\em Inventiones mathematicae}, 1977.

\bibitem[Nad89]{nadel1989nonexistence}
Alan~Michael Nadel.
\newblock The nonexistence of certain level structures on abelian varieties over complex function fields.
\newblock {\em Annals of Mathematics}, 129(1):161--178, 1989.

\bibitem[Par98]{parker1998volumes}
John~R Parker.
\newblock On the volumes of cusped, complex hyperbolic manifolds and orbifolds.
\newblock {\em Duke mathematical journal}, 94(3):433--464, 1998.

\bibitem[SB05]{shepherd2005perfect}
Nicholas~I Shepherd-Barron.
\newblock Perfect forms and the moduli space of abelian varieties.
\newblock {\em arXiv preprint math/0502362}, 2005.

\bibitem[Shi71]{shimura1971introduction}
Gor{\=o} Shimura.
\newblock {\em Introduction to the arithmetic theory of automorphic functions}, volume~1.
\newblock Princeton university press, 1971.

\bibitem[SY16]{siu2016compactification}
Yum-Tong Siu and Shing-Tung Yau.
\newblock Compactification of negatively curved complete kahler manifolds of finite volume.
\newblock In {\em Seminar on Differential Geometry.(AM-102), Volume 102}, pages 363--380. Princeton University Press, 2016.

\bibitem[Wil95]{wiles1995modular}
Andrew Wiles.
\newblock Modular elliptic curves and fermat's last theorem.
\newblock {\em Annals of mathematics}, 141(3):443--551, 1995.

\bibitem[Won18]{Wong}
Kwok-Kin Wong.
\newblock On effective existence of symmetric differentials of complex hyperbolic space forms.
\newblock {\em Math. Z.}, 290(3-4):711--733, 2018.

\bibitem[Yeu94]{yeung1994betti}
Sai~Kee Yeung.
\newblock Betti numbers on a tower of coverings.
\newblock {\em Duke Mathematical Journal}, 73(1):201--226, 1994.

\end{thebibliography}

\end{document}